\newtheorem{theorem}{Theorem}[section]
\newtheorem{remark}[theorem]{Remark}
\newtheorem{lemma}[theorem]{Lemma}
\newtheorem{proposition}[theorem]{Proposition}
\newtheorem{corollary}[theorem]{Corollary}
\def \cL{{\mathcal L}}
\DeclareMathOperator*{\I}{Int}
\title[Optimal Extraction with Price Impact]{An Optimal Extraction Problem with Price Impact}
\author[Ferrari, Koch]{Giorgio Ferrari, Torben Koch}
\keywords{}
\address{G.~Ferrari: Center for Mathematical Economics (IMW), Bielefeld University, Universit\"atsstrasse 25, 33615, Bielefeld, Germany}
\email{\href{mailto:giorgio.ferrari@uni-bielefeld.de}{giorgio.ferrari@uni-bielefeld.de}}
\address{T.~Koch: Center for Mathematical Economics (IMW), Bielefeld University, Universit\"atsstrasse 25, 33615, Bielefeld, Germany}
\email{\href{mailto:t.koch@uni-bielefeld.de}{t.koch@uni-bielefeld.de}}
\date{\today}
\numberwithin{equation}{section}
\begin{document}

\begin{abstract} 
A price-maker company extracts an exhaustible commodity from a reservoir, and sells it instantaneously in the spot market. In absence of any actions of the company, the commodity's spot price evolves either as a drifted Brownian motion or as an Ornstein-Uhlenbeck process. While extracting, the company affects the market price of the commodity, and its actions have an impact on the dynamics of the commodity's spot price. The company aims at maximizing the total expected profits from selling the commodity, net of the total expected proportional costs of extraction. We model this problem as a two-dimensional degenerate singular stochastic control problem with finite fuel. To determine its solution, we construct an explicit solution to the associated Hamilton-Jacobi-Bellman equation, and then verify its actual optimality through a verification theorem. On the one hand, when the (uncontrolled) price is a drifted Brownian motion, it is optimal to extract whenever the current price level is larger or equal than an endogenously determined constant threshold. On the other hand, when the (uncontrolled) price evolves as an Ornstein-Uhlenbeck process, we show that the optimal extraction rule is triggered by a curve depending on the current level of the reservoir. Such a curve is a strictly decreasing $C^{\infty}$-function for which we are able to provide an explicit expression. Finally, our study is complemented by a theoretical and numerical analysis of the dependency of the optimal extraction strategy and value function on the model's parameters. 
\end{abstract}

\maketitle

\smallskip

{\textbf{Keywords}}: singular stochastic finite-fuel control problem; free boundary; variational inequality; optimal extraction; market impact; exhaustible commodity. 

\smallskip

{\textbf{MSC2010 subject classification}}: 93E20; 49L20; 91B70; 91B76; 60G40. 

\smallskip

{\textbf{OR/MS subject classification}}: Dynamic programming/optimal control: applications, Markov; Probability: stochastic models applications, diffusion.

\smallskip

{\textbf{JEL subject classification}}: C61; Q32.


\section{Introduction}
\label{introduction}

The problem of a company that aims at determining the extraction rule of an exhaustible commodity, while maximizing net profits, has been widely studied in the literature. To the best of our knowledge, the first model on this topic is the seminal paper \cite{Hotelling}, in which a deterministic model of optimal extraction has been proposed. Since then, many authors have generalized the setting of \cite{Hotelling} by allowing for stochastic commodity prices and for different specifications of the admissible extraction rules (see, e.g., \cite{almansour}, \cite{brekke}, \cite{feliz}, \cite{ferrari}, \cite{Pemy}, \cite{Pindyck1} and \cite{Pindyck2} among a huge literature in Economics and applied Mathematics).

In this paper, we consider an optimal extraction problem for an infinitely-lived profit maximizing company. The company extracts an exhaustible commodity from a reservoir with a finite capacity incurring constant proportional costs, and then immediately sells the commodity in the spot market. The admissible extraction rules must not be rates, also lump sum extractions are allowed. Moreover, we assume that the company is a large player in the market, and therefore its extraction strategies affect the market price of the commodity. This happens in such a way that whenever the company extracts the commodity and sells it in the market, the commodity's price is instantaneously decreased proportionally to the extracted amount.  

Our mathematical formulation of the previous problem leads to a \emph{two-dimensional degene-rate finite-fuel singular stochastic control problem} (see \cite{Bridge}, \cite{K85}, \cite{karatzas2} and \cite{Karatzasetal00} as early contributions, and \cite{Becherer} and \cite{guo} for recent applications to optimal liquidation problems). The underlying state variable is a two-dimensional process $(X,Y)$ whose components are the commodity's price and the level of the reservoir (i.e.\ the amount of commodity still available). The price process is a linearly controlled It\^o-diffusion, while the dynamics of the level of the reservoir are purely controlled and do not have any diffusive component. In particular, we assume that, in absence of any interventions, the commodity's price evolves either as a drifted Brownian motion or as an Ornstein-Uhlenbeck process, and we solve explicitly the optimal extraction problem by following a guess-and-verify approach. This relies on the construction of a classical solution to the associated Hamilton-Jacobi-Bellman (HJB) equation, which, in our problem, takes the form of a variational inequality with state-dependent gradient constraint. To the best of our knowledge, this is the first paper that provides the explicit solution to an optimal extraction problem under uncertainty for a price-maker company facing a diffusive commodity's spot price with additive and mean-reverting dynamics.

In the simpler case of a drifted Brownian dynamics for the commodity's price, we find that the optimal extraction rule prescribes at any time to extract just the minimal amount needed to keep the commodity's price below an endogenously determined constant critical level $x^{\star}$, the so-called free boundary. A lump sum extraction (and therefore a jump in the optimal control) may be observed only at initial time if the initial commodity's price exceeds the level $x^{\star}$. In such a case, depending on the initial level of the reservoir, it might be optimal either to deplete the reservoir or to extract a block of commodity so that the price is reduced to the desired level $x^{\star}$. 

If the commodity's price has additionally a mean-reverting behavior and evolves as an Ornstein-Uhlenbeck process, the analysis is much more involved and technical than in the Brownian case. This is due to the unhandy and not explicit form of the fundamental solutions to the second-order ordinary differential equation involving the infinitesimal generator of the Ornstein-Uhlenbeck process. The properties of the increasing fundamental solution are indeed needed when constructing an explicit solution to the HJB equation. The optimal extraction rule is triggered by a critical price level that - differently to the Brownian case - is not anymore constant, but it is depending on the current level of the reservoir $y$. This critical price level - that we call $F^{-1}(y)$ in Section \ref{sec:OU} - is the inverse of a positive, strictly decreasing, $C^{\infty}$-function $F$ that we determine explicitly.
It is optimal to extract in such a way that the joint process $(X,Y)$ is kept within the region $\{(x,y):\ x \leq F^{-1}(y)\}$, and a suitable lump sum extraction should be made only if the initial data lie outside the previous region. The free boundary $F$ has an asymptote at a point $x_{\infty}$ and it is zero at the point $x_0$. These two points have a clear interpretation, as they correspond to the critical price levels triggering the optimal extraction rule in a model with infinite fuel and with no market impact, respectively.

In both the Brownian and the Ornstein-Uhlenbeck case, the optimal extraction rule is mathematically given through the solution to a Skorokhod reflection problem with oblique reflection at the free boundary in the direction $(-\alpha,-1)$. Here $\alpha>0$ is the marginal market impact of the company's actions on the commodity's price. Indeed, if the company extracts an amount, say $d\xi_t$, at time $t$, then the price is linearly reduced by $\alpha d\xi_t$ and the level of the reservoir by $d\xi_t$. Moreover, we prove that the value function is a classical $C^{2,1}$-solution to the associated HJB equation.

When the price follows an Ornstein-Uhlenbeck dynamics, our proof of the optimality of the constructed candidate value function partly employs arguments developed in the study of an optimal liquidation problem tackled  in the recent \cite{Becherer}, which shares mathematical similarities with our problem. Indeed, in the case of a ``small'' marginal cost of extraction, due to the unhandy and implicit form of the increasing eigenfunction of the infinitesimal generator of the Ornstein-Uhlenbeck process, we have not been able to prove via direct means an inequality that the candidate value function needed to satisfy in order to solve the HJB equation. For this reason, in such a case, we adopted ideas from \cite{Becherer} where an interesting reformulation of the original singular control problem as a calculus of variations approach has been developed. However, it is also worth noticing that when the marginal cost of extraction is ``large enough'', the approach of \cite{Becherer} is not directly applicable since a fundamental assumption in \cite{Becherer} (cf.\ Assumption 2.2-(C5) therein) is not satisfied. Instead, a direct study of the variational inequality leads to the desired result. This fact suggests that a combined use of the calculus of variations method and of the standard guess-and-verify approach could be successful in intricate problems where neither of the two methods leads to prove optimality of a candidate value function for any choice of the model's parameters. We refer to the proof of Proposition \ref{HJB} and to Remark \ref{rem:verificoBech} for details.

As a byproduct of our results, we find that the directional derivative (in the direction $(-\alpha,-1)$) of the optimal extraction problem's value function coincides with the value function of an optimal stopping problem (see Section \ref{sec:relatedOS} and Remark \ref{rem:OS} below). This fact, which is consistent with the findings of \cite{K85} and \cite{karatzas2}, also allows us to explain quantitatively why, in the case of a drifted Brownian dynamics for the commodity's price, the level $x^{\star}$ triggering the optimal extraction rule is independent of the current level of the reservoir $y$. Indeed, in such a case, the value function of the optimal stopping problem is independent of $y$ and, therefore, so is also its free boundary $x^{\star}$.

Thanks to the explicit nature of our results, we can provide in Section \ref{sec:compstatics} a detailed comparative statics analysis. We obtain theoretical results on the dependency of the value function and of the critical price levels $x^{\star}$, $x_{\infty}$, and $x_0$ with respect to some of the model's parameters. In the case of an Ornstein-Uhlenbeck commodity's price, numerical results are also derived to show the dependency of the free boundary curve $F$ with respect to the volatility, the mean reversion level, and the mean-reversion speed.

The rest of the paper is organized as follows. In Section \ref{sec:setting} we introduce the setting and formulate the problem. In Section \ref{sec:PreResVerTheorem} we provide preliminary results and a Verification Theorem. The explicit solution to the optimal extraction problem is then constructed in Sections \ref{sec:ABM} and \ref{sec:OU} when the commodity's price is a drifted Brownian motion and an Ornstein-Uhlenbeck process, respectively. A connection to an optimal stopping problem is derived in Section \ref{sec:relatedOS}. A sensitivity analysis is presented in Section \ref{sec:compstatics}. The appendices contain the proofs of some results needed in Sections \ref{sec:OU} and \ref{comp:OU}, and an auxiliary lemma.


\section{Setting and Problem Formulation}
\label{sec:setting}
Let $(\Omega, \mathcal{F}, \mathbb{F}:=(\mathcal{F}_t)_{t\geq 0}, \mathbb{P})$ be a filtered probability space, with filtration $\mathbb{F}$ generated by a standard one-dimensional Brownian motion $(W_t)_{t\geq 0}$, and as usual augmented by $\mathbb{P}$-null sets.

We consider a company extracting a commodity from a reservoir with a finite capacity $y \geq 0$, and selling it instantaneously in the spot market. We assume that, in absence of any interventions of the company, the (fundamental) commodity's price $(X^x_t)_{t\geq 0}$ evolves stochastically according to the dynamics 
\begin{align}
\label{Xuncontrolled}
dX^x_t = \big(a-bX^x_t\big)dt + \sigma dW_t,\qquad X^x_0 = x \in\mathbb{R},
\end{align} 
for some constants $a\in\mathbb{R}$, $b\geq 0$ and $\sigma>0$. In the following, we identify the fundamental price when $b=0$ with a drifted Brownian motion with drift $a$. On the other hand, when $b>0$ the price is of Ornstein-Uhlenbeck type, thus having a mean-reverting behavior typically observed in the commodity market (see, e.g., Chapter 2 of \cite{Lutz}). In this latter case, the parameter $\frac{a}{b}$ represents the mean-reversion level, and $b$ is the mean-reversion speed. In our model we do not restrict our attention to positive fundamental prices, since certain commodities have been traded also at negative prices. For example, that happened in Alberta (Canada) in October 2017 and May 2018 where the producers of natural gas faced the tradeoff between paying customers to take gas, or shutting down the wells\footnote{See, e.g., the article on the \href{http://business.financialpost.com/commodities/canadian-natural-gas-prices-enter-negative-territory-amid-pipeline-outages}{Financial Post} or the news on the website of the \href{https://www.eia.gov/naturalgas/weekly/archivenew_ngwu/2018/05_10/}{U.S.\ Energy Information Administration}}. 

The reserve level can be decreased at a constant proportional cost $c> 0$. The extraction does not need to be performed at a rate, and we identify the cumulative amount of commodity that has been extracted up to time $t\geq 0$, $\xi_t$, as the company's control variable. It is an $\mathbb{F}$-adapted, nonnegative, and increasing c\`{a}dl\`{a}g (right-continuous with left-limits) process $(\xi_t)_{t\geq 0}$ such that $\xi_t\leq y$ a.s.\ for all $t\geq 0$ and $\xi_{0-}=0$ a.s. The constraint $\xi_t\leq y$ for all $t\geq 0$ has the clear interpretation that at any time it cannot be extracted more than the initial amount of commodity available in the reservoir. For any given $y\geq0$, the set of \emph{admissible extraction strategies} is therefore defined as 
\begin{align*}
\mathcal{A}(y):=\{\xi:\Omega\times[0,\infty)\mapsto[0,\infty)&\text{ : $(\xi_t)_{t\geq 0}$ is $\mathbb{F}$-adapted, }t\mapsto\xi_t \text{ is increasing, c\`{a}dl\`{a}g,}\\
&\text{ with }\xi_{0-}=0\text{ and } \xi_t\leq y\text{ a.s.}\}.
\end{align*} 
Clearly, $\mathcal{A}(0)=\{\xi\equiv 0\}$.

The level of the reservoir at time $t$, $Y_t$, then evolves as 
$$dY^{y,\xi}_t=-d\xi_t,\qquad Y^{y,\xi}_{0-}=y\geq 0,$$ 
where we have written $Y^{y,\xi}$ in order to stress the dependency of the reservoir's level on the initial amount of commodity $y$ and on the extraction strategy $\xi$.

While extracting, the company affects the market price of the commodity. In particular, when following an extraction strategy $\xi\in\mathcal{A}(y)$, the market price at time $t$, $X_t$, is instantaneously reduced by $\alpha d\xi_t$, for some $\alpha>0$, and the spot price thus evolves as 
\begin{align}
\label{affectedX}
dX^{x,\xi}_t = \big(a-bX^{x,\xi}_t\big)dt + \sigma dW_t-\alpha d\xi_t,\qquad X^{x,\xi}_{0-} = x\in\mathbb{R}.
\end{align}
We notice that for any $\xi\in\mathcal{A}(y)$ there exists a unique strong solution to \eqref{affectedX} by Theorem 6 in Chapter V of \cite{protter}, and we denote it by $X^{x,\xi}$ in order to keep track of its initial value $x\in\mathbb{R}$, and of the adopted extraction strategy $\xi\in\mathcal{A}(y)$.

\begin{remark}
Notice that when $b=0$, the impact of the company's extraction on the price is permanent. On the other hand, it is transient (or temporary) in the mean-reverting case $b>0$ because, in the absence of any interventions from the company, the impact decreases since $X$ reverts back to its mean-reversion level.
\end{remark}

The company aims at maximizing the total expected profits, net of the total expected costs of extraction. That is, for any initial price $x \in \mathbb{R}$ and any initial value of the reserve $y\geq0$, the company aims at determining $\xi^{\star} \in \mathcal{A}(y)$ that attains
\begin{align}
\label{ValueFnc} 
V(x,y):=\mathcal{J}(x,y,\xi^{\star})=\sup_{\xi\in\mathcal{A}(y)}\mathcal{J}(x,y,\xi),
\end{align}
where 
\begin{align}\label{PC}
J(x,y,\xi):=\mathbb{E}\bigg[\int_{0}^{\infty}e^{-\rho t}(X_t^{x,\xi}-c) d\xi_t^c+\sum_{t\geq 0:\Delta\xi_t\neq 0}e^{-\rho t}\big[(X_{t-}^{x,\xi}-c)\Delta\xi_t-\frac{1}{2}\alpha(\Delta\xi_t)^2\big]\bigg],
\end{align}
for any $\xi\in\mathcal{A}(y)$, and for a given discount factor $\rho>0$. Here, and also in the following, $\Delta\xi_t:=\xi_t-\xi_{t-}$, $t\geq 0$, and $\xi^c$ denotes the continuous part of $\xi\in\mathcal{A}(y)$.

\begin{remark}
In \eqref{PC} the integral term in the expectation is intended as a standard Lebesgue-Stieltjes integral with respect to the continuous part $\xi^c$ of $\xi$. The sum takes instead care of the lump sum extractions, and its form might be informally justified by interpreting any lump sum extraction of size $\Delta\xi_t$ at a given time $t$ as a sequence of infinitely many infinitesimal extractions made at the same time $t$. In this way, setting $\epsilon_t:=\frac{\Delta\xi_t}{N}$, the net profit accrued at time $t$ by extracting a large amount $\Delta\xi_t$ of the commodity is 
$$\sum_{j=0}^{N-1}e^{-\rho t}\big(X_{t-}^{x,\xi}-c-j\alpha\epsilon_t\big)\epsilon_t\overset{N\rightarrow\infty}{\longrightarrow}\int_{0}^{\Delta\xi_t}e^{-\rho t} \big(X^{x,\xi}_{t-}-c-\alpha u\big)du=e^{-\rho t}\Big[\big(X_{t-}^{x,\xi}-c\big)\Delta\xi_t-\frac{1}{2}\alpha(\Delta\xi_t)^2\Big].$$ 
This heuristic argument - also discussed at pp.\ 329--330 of \cite{Alvarez2000} in the context of one-dimensional monotone follower problems - can be rigorously justified, and technical details on the convergence can be found in the recent \cite{Becherer2}. We also refer to \cite{Jacketal} and \cite{Zhu} as other papers on singular stochastic control problems employing such a definition for the integral with respect to the control process.
\end{remark}


\section{Preliminary Results and a Verification Theorem}
\label{sec:PreResVerTheorem}

In this section we derive the HJB equation associated to $V$ and we provide a verification theorem. We start by proving the following preliminary properties of the value function $V$.
\begin{proposition}
\label{GrowthV}
	There exists a constant $K>0$ such that for all $(x,y)\in\mathbb{R}\times [0,\infty)$ one has
	\begin{align}
	\label{eq13}
	0\leq V(x,y)\leq Ky(1+y)\big(1+|x|\big).
	\end{align}
In particular, $V(x,0)=0$. Moreover, $V$ is increasing with respect to $x$ and $y$.	
\end{proposition}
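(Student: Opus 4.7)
The plan is to dispatch the qualitative statements from the definitions and concentrate on the upper bound.

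Non-negativity is immediate since $\xi\equiv 0\in\mathcal{A}(y)$ yields $\mathcal{J}(x,y,0)=0$, so $V\ge 0$. Because $\mathcal{A}(0)=\{\xi\equiv 0\}$, we also have $V(x,0)=0$. Monotonicity in $y$ follows from the inclusion $\mathcal{A}(y_1)\subseteq\mathcal{A}(y_2)$ whenever $y_1\le y_2$, together with the fact that the functional $\mathcal{J}$ in \eqref{PC} does not depend explicitly on $y$. For monotonicity in $x$, I would fix $\xi\in\mathcal{A}(y)$ and compare the two solutions of \eqref{affectedX} driven by the same Brownian motion: since their jumps coincide ($-\alpha\Delta\xi_t$), the difference $Z_t:=X^{x_2,\xi}_t-X^{x_1,\xi}_t$ is continuous and solves the deterministic linear ODE $dZ_t=-bZ_t\,dt$ with $Z_0=x_2-x_1$, hence $Z_t=(x_2-x_1)e^{-bt}\ge 0$ for $x_2\ge x_1$. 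Substituting into \eqref{PC} gives
\begin{equation*}
\mathcal{J}(x_2,y,\xi)-\mathcal{J}(x_1,y,\xi)=(x_2-x_1)\,\mathbb{E}\Big[\int_{[0,\infty)}e^{-(\rho+b)t}\,d\xi_t\Big]\ge 0,
\end{equation*}
and passing to the supremum over $\xi$ shows $V(x_1,y)\le V(x_2,y)$.

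The core of the proof is the upper bound. The key observation is a pathwise comparison with the uncontrolled price $\overline{X}:=X^{x,0}$: setting $D_t:=\overline{X}_t-X^{x,\xi}_t$, the dynamics \eqref{affectedX} give $dD_t=-bD_t\,dt+\alpha\,d\xi_t$ with $D_{0-}=0$, so $D_t=\alpha\int_{[0,t]}e^{-b(t-s)}\,d\xi_s\in[0,\alpha y]$; in particular $|X^{x,\xi}_t|\le|\overline{X}_t|+\alpha y$ for every $t\ge 0$. Discarding the non-positive contributions $-c\,d\xi^c_t$ and $-\tfrac{\alpha}{2}(\Delta\xi_t)^2$ in \eqref{PC} and using $\xi_\infty\le y$ together with the continuity of $\overline{X}$ yields the pathwise estimate
\begin{equation*}
\mathcal{J}(x,y,\xi)\le\mathbb{E}\Big[\int_{[0,\infty)}e^{-\rho t}\bigl(|\overline{X}_t|+\alpha y\bigr)\,d\xi_t\Big]\le y\,\mathbb{E}\bigl[\sup_{t\ge 0}e^{-\rho t}|\overline{X}_t|\bigr]+\alpha y^2.
\end{equation*}

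The main obstacle is then to show $\mathbb{E}[\sup_{t\ge 0}e^{-\rho t}|\overline{X}_t|]\le C(1+|x|)$ uniformly, and this is where the argument is not entirely routine. In both the drifted Brownian ($b=0$) and the Ornstein-Uhlenbeck ($b>0$) regimes, $\overline{X}_t$ is Gaussian with mean bounded by $|x|+C(1+t)$ and variance bounded by $C(1+t)$. A dyadic splitting $\sup_{t\ge 0}e^{-\rho t}|\overline{X}_t|\le\sum_{n\ge 0}e^{-\rho n}\sup_{t\in[n,n+1]}|\overline{X}_t|$ together with standard Gaussian maximal bounds on each unit interval produces a convergent geometric series controlled by a constant multiple of $1+|x|$. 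Taking the supremum over $\xi\in\mathcal{A}(y)$ and absorbing constants, one obtains $V(x,y)\le Ky(1+y)(1+|x|)$ as asserted.
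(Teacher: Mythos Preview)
Your argument is correct and follows the same overall architecture as the paper: the qualitative statements are dispatched identically, and the upper bound hinges on the same pathwise comparison $|X^{x,\xi}_t|\le|\overline{X}_t|+\alpha y$ together with $\xi_\infty\le y$, reducing matters to controlling $\mathbb{E}\bigl[\sup_{t\ge 0}e^{-\rho t}|\overline{X}_t|\bigr]$.

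The one genuine point of divergence is how you establish that last supremum bound. The paper applies It\^o's formula to $e^{-\rho t}X^{x,0}_t$, takes the supremum, and invokes the Burkholder--Davis--Gundy inequality on the resulting stochastic integral, combined with the elementary estimate $\int_0^\infty e^{-\rho u}\mathbb{E}[|X^{x,0}_u|]\,du\le C(1+|x|)$. Your dyadic decomposition plus Gaussian maximal inequalities on unit intervals is a valid alternative that avoids stochastic calculus altogether and exploits the explicit Gaussian structure of both the drifted Brownian and Ornstein--Uhlenbeck cases; it is arguably more elementary but less generic (the paper's It\^o/BDG route would transfer more readily to non-Gaussian diffusions). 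A minor cosmetic difference is that you merge the continuous and jump parts of the Stieltjes integral into a single $\int_{[0,\infty)}\cdots\,d\xi_t$, whereas the paper treats them separately; your packaging is slightly cleaner. Your derivation of the comparison inequality via the explicit ODE for $D_t=\overline{X}_t-X^{x,\xi}_t$ is also tidier than the paper's case-by-case argument.
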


\begin{proof}
	The proof is organized in two steps. We first prove that \eqref{eq13} holds true, and then we show the monotonicity properties of $V$.\vspace{0.25 cm}
	
	\emph{Step 1.} The nonnegativity of $V$ follows by taking the admissible (no-)extraction rule $\xi\equiv 0$ such that $\mathcal{J}(x,y,0)=0$ for all $(x,y)\in\mathbb{R}\times[0,\infty)$. The fact that $V(x,0)=0$ clearly follows by noticing that $\mathcal{A}(0)=\{\xi\equiv 0\}$ and $\mathcal{J}(x,y,0)=0$.
	
	To determine the upper bound in \eqref{eq13}, let $(x,y)\in\mathbb{R}\times(0,\infty)$ be given and fixed, and for any $\xi\in\mathcal{A}(y)$ we have 
	\begin{align}
	\label{eq11}
	\begin{split}
	&\bigg|\mathbb{E}\bigg[\int_{0}^{\infty}e^{-\rho t}\big(X^{x,\xi}_t-c\big)d\xi_t^c+\sum_{t\geq0:\Delta\xi_t\neq0}e^{-\rho t}\big[(X^{x,\xi}_{t-}-c)\Delta\xi_t-\frac{\alpha}{2}(\Delta\xi_t)^2\big]\bigg]\bigg|\\
	\leq&\,\mathbb{E}\bigg[\int_{0}^{\infty}e^{-\rho t}|X^{x,\xi}_t|d\xi_t^c\bigg]+cy+\mathbb{E}\bigg[\sum_{t\geq0:\Delta\xi_t\neq0}e^{-\rho t}\big[|X^{x,\xi}_{t-}|\Delta\xi_t+\frac{\alpha}{2}(\Delta\xi_t)^2\big]\bigg],
	\end{split}
	\end{align}
	where we have used that $c\int_{0}^{\infty}e^{-\rho t} d\xi_t = c \int_{0}^{\infty}\rho e^{-\rho t} \xi_t dt \leq cy$ to obtain the term $cy$ in right-hand side above.
	
	We now aim at estimating the two expectations appearing in right-hand side of $\eqref{eq11}$. To accomplish that, denote by $X^{x,0}$ the solution to \eqref{affectedX} associated to $\xi\equiv 0$ (i.e.\ the solution to \eqref{Xuncontrolled}). Then, if $b=0$ one easily finds $X^{x,\xi}_t = X^{x,0}_t - \alpha \xi_t \geq -|X^{x,0}_t| - \alpha y$ a.s., since $\xi_t\leq y$ a.s. If $b>0$, because $X^{x,\xi}_t\leq X_t^{x,0}$ a.s.\ for all $t\geq 0$ and $\xi_t\leq y$ a.s., one has
	\begin{align*}
	X^{x,\xi}_t&=x+\int_0^t\big(a-bX^{x,\xi}_s\big)ds +\sigma W_t-\alpha\xi_t\geq x+\int_0^t\big(a-bX^{x,0}_s\big)ds +\sigma W_t-\alpha y\\
	&=X^{x,0}_t-\alpha y\geq -|X_t^{x,0}|-\alpha y.
	\end{align*} 
	Moreover, one clearly has $X^{x,\xi}_t\leq X_t^{x,0}\leq |X_t^{x,0}|+\alpha y$ for $b\geq0$. Hence, in any case,
	\begin{align}
	\label{eq12}
	|X^{x,\xi}_t|\leq |X_t^{x,0}|+\alpha y.
	\end{align} 
	
	By an application of It\^o's formula we find for $b=0$ that
	$$|e^{-\rho t}X^{x,0}_t|\leq |x|+\rho\int_0^te^{-\rho u}|X^{x,0}_u|du+|a|\int_0^te^{-\rho u}du+\bigg|\int_0^te^{-\rho u}\sigma dW_u\bigg|,$$
	and for $b>0$ that
	$$|e^{-\rho t}X^{x,0}_t|\leq |x|+\rho\int_0^te^{-\rho u}|X^{x,0}_u|du+\int_0^te^{-\rho u}(|a|+b|X^{x,0}_u|)du+\bigg|\int_0^te^{-\rho u}\sigma dW_u\bigg|.$$
	The previous two equations imply that, in both cases $b=0$ and $b>0$, there exists $C_1>0$ such that
	\begin{align}
	\label{eq40}
	\mathbb{E}\bigg[\sup_{t\geq 0}e^{-\rho t}|X_t^{x,0}|\bigg]
	\leq|x|+C_1\bigg(1+\int_{0}^{\infty}e^{-\rho u}\mathbb{E}\big[|X^{x,0}_u|\big] du\bigg)+\sigma\mathbb{E}\bigg[\sup_{t\geq 0}\bigg|\int_{0}^{t}e^{-\rho u}dW_u\bigg|\bigg].
	\end{align}
	Then, the Burkholder-Davis-Gundy's inequality (see, e.g., Theorem 3.28 in Chapter 3 of \cite{karatzas}) yields 
	 \begin{align}
	 \label{eq41}
	 \mathbb{E}\bigg[\sup_{t\geq 0}e^{-\rho t}|X_t^{x,0}|\bigg]\leq |x|+C_1\bigg(1+\int_{0}^{\infty}e^{-\rho u}\mathbb{E}\big[|X^{x,0}_u|\big] du\bigg)+C_2\mathbb{E}\bigg[\bigg(\int_{0}^{\infty}e^{-2\rho u}du\bigg)^{\frac{1}{2}}\bigg].
	 \end{align}
	 for a constant $C_2>0$, and therefore
	\begin{align}
	\label{supfinite}
	\mathbb{E}\bigg[\sup_{t\geq 0}e^{-\rho t}|X_t^{x,0}|\bigg]\leq C_4\big(1+|x|\big),
	\end{align}
	for some constant $C_4>0$, since it follows from standard considerations that there exists $C_3>0$ such that $\int_0^{\infty}e^{-\rho u}\mathbb{E}\big[|X_u^{x,0}|\big] du \leq C_3(1+|x|)$.
	
	Now, exploiting \eqref{eq12} and \eqref{supfinite}, in both cases $b=0$ and $b>0$ we have the following:
	\begin{itemize}
		\item [(i)] For a suitable constant $K_0>0$ (independent of $x$ and $y$)
		\begin{align}\label{eq17}
		\begin{split}
		&\mathbb{E}\bigg[\int_{0}^{\infty}e^{-\rho t}|X^{x,\xi}_t|d\xi_t^c\bigg] \leq \mathbb{E}\bigg[\int_{0}^{\infty}e^{-\rho t}|X^{x,0}_t|d\xi_t^c\bigg] + \alpha y \mathbb{E}\bigg[\int_{0}^{\infty}\rho e^{-\rho t}\xi_t^c dt\bigg]  \\
		& \leq y\mathbb{E}\bigg[\sup_{t\geq 0}e^{-\rho t}|X_t^{x,0}|\bigg] + \alpha y^2 \leq C_4 y\big(1+|x|\big) + \alpha y^2 \leq K_0y(1+y)\big(1+|x|\big).  
		\end{split}
		\end{align}
		Here we have used: \eqref{eq12} and an integration by parts for the first inequality; the fact that $\xi_t^c  \leq y$ a.s.\ for the second one; equation \eqref{supfinite} to have the penultimate step.
		
		\item [(ii)] Employing again \eqref{eq12}, the fact that $\sum_{t\geq0: \Delta\xi_t\neq 0}\Delta \xi_t \leq y$, and \eqref{supfinite}, we also have
		\begin{align}\label{eq18}
		\begin{split}
		&\mathbb{E}\bigg[\sum_{t\geq0: \Delta\xi_t\neq 0}e^{-\rho t}\big[|X^{x,\xi}_{t-}|\Delta\xi_t+\frac{\alpha}{2}(\Delta\xi_t)^2\big]\bigg]\leq \frac{3}{2}\alpha y^2+\mathbb{E}\bigg[\sum_{t\geq 0: \Delta\xi_t\neq 0}e^{-\rho t}|X^{x,0}_{t}|\Delta\xi_t\bigg]\\
		&\,\leq\frac{3}{2}\alpha y^2+y\mathbb{E}\bigg[\sup_{t\geq 0}\big(e^{-\rho t}|X^{x,0}_t|\big)\bigg]\leq\frac{3}{2}\alpha y^2+C_4 y\big(1+|x|\big)\leq K_1y(1+y)\big(1+|x|\big),
		\end{split}
		\end{align}
		for some $K_1>0$.
	\end{itemize}
	Thus, using (i) and (ii) in \eqref{eq11}, we conclude that there exists a constant $K>0$ such that $|\mathcal{J}(x,y,\xi)|\leq Ky(1+y)\big(1+|x|\big)$ for any $\xi\in\mathcal{A}(y)$, and therefore \eqref{eq13} holds.\vspace{0.25cm}
	
	\emph{Step 2.} To prove that $x\mapsto V(x,y)$ is increasing for any $y\geq0$, let $x_2 \geq x_1$, and observe that one clearly has $X^{x_2,\xi}_t\geq X^{x_1,\xi}_t$ a.s.\ for any $t\geq 0$ and $\xi\in\mathcal{A}(y)$. Therefore $\mathcal{J}(x_2,y,\xi)\geq\mathcal{J}(x_1,y,\xi)$ which implies $V(x_2,y)\geq V(x_1,y)$. Finally, letting $y_2 \geq y_1$, we have $\mathcal{A}(y_2)\supseteq\mathcal{A}(y_1)$, and thus $V(x,y_2)\geq V(x,y_1)$ for any $x \in \mathbb{R}$.
\end{proof}

We now move on by deriving the dynamic programming equation that we expect that $V$ should satisfy. In the rest of this paper, we will often denote by $f_x,f_{xx},f_y,f_{xy}$ etc.\ the partial derivatives with respect to its arguments $x$ and $y$ of a given smooth function $f$ of several variables. Moreover, we will denote (unless otherwise stated) by $f'$, $f''$ etc.\ the derivatives with respect to its argument of a smooth function $f$ of a single variable.

At initial time the company is faced with two possible actions: extract or wait. On the one hand, suppose that at time zero the company does not extract for a short time period $\Delta t$, and then it continues by following the optimal extraction rule (if one exists). Since this action is not necessarily optimal, it is associated to the inequality $$V(x,y)\geq\mathbb{E}\bigg[e^{-\rho \Delta t}V(X^{x}_{\Delta t-},y)\bigg],\quad(x,y)\in\mathbb{R}\times(0,\infty).$$
Then supposing $V$ is $C^{2,1}(\mathbb{R}\times [0,\infty))$, we can apply It\^o's formula, divide by $\Delta t$, invoke the mean value theorem, let $\Delta t\rightarrow 0$, and obtain
$$\mathcal{L}V(x,y)-\rho V(x,y)\leq 0,\quad(x,y)\in\mathbb{R}\times(0,\infty).$$ Here $\mathcal{L}$ is given by the second order differential operator 
\begin{align}
\label{InfinitesOp}
\mathcal{L} :=\frac{1}{2}\sigma^2 \frac{\partial^2}{\partial x^2} + 
\begin{cases}
\displaystyle (a-bx) \frac{\partial}{\partial x} ,\quad & \text{if}\,\, b>0,\\ 
\\
\displaystyle a \frac{\partial}{\partial x},\quad & \text{if}\,\, b=0.
\end{cases}
\end{align}

On the other hand, suppose that the company immediately extracts an amount $\varepsilon>0$ of the commodity, sells it in the market, and then follows the optimal extraction rule (provided that one exists). With reference to \eqref{PC}, this action is associated to the inequality 
$$V(x,y)\geq V(x-\alpha\varepsilon,y-\varepsilon)+(x-c)\varepsilon-\frac{1}{2}\alpha\varepsilon^2,$$ 
which, adding and substracting $V(x-\alpha\varepsilon,y)$, dividing by $\varepsilon$, and letting $\varepsilon\rightarrow 0$, yields $$0\geq-\alpha V_x(x,y)-V_y(x,y)+x-c.$$ 

Since only one of those two actions can be optimal, and given the Markovian nature of our setting, the previous inequalities suggest that $V$ should identify with an appropriate solution $w$ to the Hamilton-Jacobi-Bellman (HJB) equation
\begin{align}\label{HJB1}
\max\Big\{\mathcal{L}w(x,y)-\rho w(x,y),-\alpha w_x(x,y)-w_y(x,y)+x-c\Big\}=0,\quad(x,y)\in\mathbb{R}\times(0,\infty),
\end{align} with boundary condition $w(x,0)=0$ (cf.\ Proposition \ref{GrowthV}), and satisfying the growth condition in \eqref{eq13}. Equation \eqref{HJB1} takes the form of a variational inequality with state-dependent gradient constraint.

With reference to \eqref{HJB1} we introduce the \emph{waiting region} 
\begin{equation}
\label{def:waiting}
\mathbb{W}:=\{(x,y)\in\mathbb{R}\times(0,\infty):\mathcal{L}w(x,y)-\rho w(x,y)=0,\, -\alpha w_x(x,y)-w_y(x,y)+x-c<0\},
\end{equation}
in which we expect that it is not optimal to extract the commodity, and the \emph{selling region} 
\begin{equation}
\label{def:selling}
\mathbb{S}:=\{(x,y)\in\mathbb{R}\times(0,\infty):\mathcal{L}w(x,y)-\rho w(x,y) \leq 0,\, -\alpha w_x(x,y)-w_y(x,y)+x-c=0\},
\end{equation}
where it should be profitable to extract and sell the commodity. In the following, we will denote by $\overline{\mathbb{W}}$ the topological closure of $\mathbb{W}$.

The next theorem shows that a suitable solution to HJB equation \eqref{HJB1} identifies with the value function, whenever there exists an admissible extraction rule that keeps (with minimal effort) the state process $(X,Y)$ inside $\overline{\mathbb{W}}$.

\begin{theorem}[Verification Theorem]
\label{VerificationTheorem}
Suppose there exists a function $w:\mathbb{R}\times[0,\infty)\mapsto\mathbb{R}$ such that $w\in C^{2,1}(\mathbb{R}\times[0,\infty))$, solves HJB equation \eqref{HJB1} with boundary condition $w(x,0)=0$, is increasing in $y$, and satisfies the growth condition 
	\begin{align}\label{eq15}
	0 \leq w(x,y)\leq Ky(1+y)(1+|x|), \quad (x,y) \in \mathbb{R}\times (0,\infty),
	\end{align} 
	for some constant $K>0$. Then $w\geq V$ on $\mathbb{R}\times[0,\infty)$.
	
	Moreover, suppose that for all initial values $(x,y)\in\mathbb{R}\times(0,\infty)$, there exists a process $\xi^\star\in\mathcal{A}(y)$ such that
	\begin{align}\label{eq2VT}
	(X_t^{x,\xi^\star},Y_t^{y,\xi^\star})\in\overline{\mathbb{W}},\quad\text{for all }t\geq 0,\, \text{$\mathbb{P}$-a.s.},\\\label{eq3VT}
	\xi_t^\star=\int_{[0,t]}\mathds{1}_{\{(X_s^{x,\xi^\star},Y_s^{y,\xi^\star})\in\mathbb{S}\}}d\xi^\star_s,\quad\text{for all }t\geq 0,\, \text{$\mathbb{P}$-a.s.}
	\end{align}
	Then we have $w=V$ on $\mathbb{R}\times[0,\infty)$ and $\xi^\star$ is optimal; that is, $\mathcal{J}(x,y,\xi^\star)=V(x,y)$ for all $(x,y) \in \mathbb{R}\times [0,\infty)$.
\end{theorem}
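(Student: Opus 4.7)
The plan is to apply a generalized Itô formula (e.g.\ Meyer's version for semimartingales with c\`adl\`ag finite-variation components) to $e^{-\rho t} w(X_t^{x,\xi}, Y_t^{y,\xi})$ for an arbitrary $\xi\in\mathcal{A}(y)$, use the two HJB inequalities term by term, localize to neutralize the stochastic integral, and let the horizon tend to infinity to obtain $w(x,y)\geq\mathcal{J}(x,y,\xi)$. Writing $(X_t,Y_t) := (X_t^{x,\xi}, Y_t^{y,\xi})$ and taking a localizing sequence $(\tau_n)\uparrow\infty$ for the stochastic integral, the It\^o decomposition reads
\begin{align*}
e^{-\rho(T\wedge\tau_n)} w(X_{T\wedge\tau_n}, Y_{T\wedge\tau_n})
&= w(x,y) + \int_0^{T\wedge\tau_n} e^{-\rho t}(\mathcal{L}w - \rho w)(X_t, Y_t)\, dt + M^n_{T}\\
&\quad - \int_0^{T\wedge\tau_n} e^{-\rho t}\bigl[\alpha w_x + w_y\bigr](X_t, Y_t)\, d\xi_t^c \\
&\quad + \sum_{0\leq t\leq T\wedge\tau_n,\,\Delta\xi_t\neq 0} e^{-\rho t}\bigl[w(X_t, Y_t) - w(X_{t-}, Y_{t-})\bigr],
\end{align*}
where $M^n$ is a true martingale.

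By \eqref{HJB1} the Lebesgue integrand is nonpositive, while the gradient constraint gives the pointwise bound $-[\alpha w_x + w_y](X_t,Y_t) \leq c - X_t$ dominating the singular-continuous contribution. For each jump, writing the fundamental theorem of calculus along the direction $(-\alpha,-1)$,
\begin{align*}
w(X_{t-}-\alpha\Delta\xi_t, Y_{t-}-\Delta\xi_t) - w(X_{t-},Y_{t-}) = -\int_0^{\Delta\xi_t}\bigl[\alpha w_x + w_y\bigr](X_{t-}-\alpha u, Y_{t-}-u)\,du,
\end{align*}
and again applying the gradient constraint inside the integral, I dominate each jump by $-(X_{t-}-c)\Delta\xi_t + \tfrac12\alpha(\Delta\xi_t)^2$, which is precisely the negative of its contribution to $\mathcal{J}(x,y,\xi)$. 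Taking expectations, using $\mathbb{E}[M^n_T]=0$, and rearranging yields
\begin{align*}
w(x,y)\geq \mathbb{E}\bigl[e^{-\rho(T\wedge\tau_n)} w(X_{T\wedge\tau_n}, Y_{T\wedge\tau_n})\bigr] + \mathbb{E}\bigl[\mathcal{J}^{T\wedge\tau_n}(x,y,\xi)\bigr],
\end{align*}
with $\mathcal{J}^{T\wedge\tau_n}$ the analogue of $\mathcal{J}$ truncated to $[0,T\wedge\tau_n]$. Letting $n\to\infty$ and then $T\to\infty$, dominated convergence gives $\mathcal{J}^{T\wedge\tau_n}(x,y,\xi)\to\mathcal{J}(x,y,\xi)$, while the boundary term vanishes thanks to \eqref{eq15} combined with $Y_t\leq y$ and the estimates $|X_t^{x,\xi}|\leq |X_t^{x,0}|+\alpha y$ and $\mathbb{E}[\sup_{t\geq 0} e^{-\rho t}|X_t^{x,0}|]<\infty$ established in the proof of Proposition \ref{GrowthV}. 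I expect this limiting step to be the main technical obstacle, as the dominated-convergence estimate must be uniform in $\xi\in\mathcal{A}(y)$; the finite-fuel constraint $Y_t\leq y$ is precisely what makes it work. Taking the supremum over $\xi$ then gives $w\geq V$.

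For the reverse inequality, under \eqref{eq2VT}--\eqref{eq3VT} I would argue that every bound above is saturated along $\xi^\star$. Condition \eqref{eq2VT} together with the continuity of $\mathcal{L}w-\rho w$ extends the identity $\mathcal{L}w-\rho w=0$ from $\mathbb{W}$ (cf.\ \eqref{def:waiting}) to $\overline{\mathbb{W}}$, killing the Lebesgue integrand along $(X^{x,\xi^\star}, Y^{y,\xi^\star})$. Condition \eqref{eq3VT} localizes $d\xi^{\star,c}$ on $\mathbb{S}$, where the gradient constraint is active, so the singular-continuous inequality becomes an equality. For jumps, since $\xi^\star$ only jumps into $\mathbb{S}$ along the direction $(-\alpha,-1)$, the entire jump segment lies in $\mathbb{S}$---a structural property to be confirmed in the explicit constructions of Sections \ref{sec:ABM} and \ref{sec:OU}---which forces equality in the line integral above as well. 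Consequently $w(x,y)=\mathcal{J}(x,y,\xi^\star)\leq V(x,y)\leq w(x,y)$, proving $w=V$ and the optimality of $\xi^\star$.
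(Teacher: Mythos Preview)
Your proposal is correct and follows essentially the same route as the paper: It\^o--Tanaka--Meyer with localization, the line-integral treatment of jumps along $(-\alpha,-1)$, the two HJB inequalities, and dominated convergence via the estimates of Proposition~\ref{GrowthV}. The only cosmetic difference is that the paper drops the nonnegative terminal term in Step~1 directly via $w\geq 0$ and only proves that it vanishes (using H\"older and a second-moment bound $\mathbb{E}[\sup_{t\geq 0} e^{-\rho t}|X^{x,0}_t|^2]<\infty$, not just the first-moment bound you cite) in Step~2, where equality is actually needed.
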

	
\begin{proof}The proof is organized in two steps. Since by assumption $w(x,0)=0=V(x,0)$, $x \in \mathbb{R}$, in the following argument we can assume that $y>0$.
\vspace{0.25cm}

	\emph{Step 1.} Let $(x,y)\in\mathbb{R}\times(0,\infty)$ be given and fixed. Here, we show that $V(x,y)\leq w(x,y)$. Let $\xi\in\mathcal{A}(y)$, and for $N\in\mathbb{N}$ set $\tau_{R,N}:=\inf\{s\geq 0:X^{x,\xi}_s\notin (-R,R)\}\wedge N.$ By It\^o-Tanaka-Meyer's formula, we find
			\begin{align}\label{eq14}
			\begin{split}
			&e^{-\rho \tau_{R,N}}w(X^{x,\xi}_{\tau_{R,N}},Y^{y,\xi}_{\tau_{R,N}})-w(x,y)\\
			=&\int_{0}^{\tau_{R,N}}e^{-\rho s}\Big(\mathcal{L}w(X^{x,\xi}_s,Y^{y,\xi}_s)-\rho w(X^{x,\xi}_s,Y^{y,\xi}_s)\Big)ds+\underbrace{\sigma\int_{0}^{\tau_{R,N}}e^{-\rho s}w_x(X^{x,\xi}_s,Y^{y,\xi}_s)dW_s}_{=:M_{\tau_{R,N}}}\\&
			+\sum_{0\leq s\leq \tau_{R,N}}e^{-\rho s}\big[w(X^{x,\xi}_s,Y^{y,\xi}_s)-w(X^{x,\xi}_{s-},Y^{y,\xi}_{s-})\big]\\
			&+\int_{0}^{\tau_{R,N}}e^{-\rho s}\big[-\alpha w_x(X^{x,\xi}_s,Y^{y,\xi}_s) - w_y(X^{x,\xi}_s,Y^{y,\xi}_s)\big]d\xi_s^c.
			\end{split}
			\end{align}
			Now, 
			\begin{align*}
			& w(X^{x,\xi}_s,Y^{y,\xi}_s)-w(X^{x,\xi}_{s-},Y^{y,\xi}_{s-})=w(X^{x,\xi}_{s-}-\alpha\Delta\xi_s,Y^{y,\xi}_{s-}-\Delta\xi_s)-w(X^{x,\xi}_{s-},Y^{y,\xi}_{s-})\\
			&=\int_{0}^{\Delta\xi_s}\frac{\partial w(X^\xi_{s-}-\alpha u,Y^{y,\xi}_{s-}-u)}{\partial u}du\\
			&=\int_{0}^{\Delta\xi_s}\Big[-\alpha w_x(X^{x,\xi}_{s-}-\alpha u,Y^{y,\xi}_{s-}-u)-w_y(X^{x,\xi}_{s-}-\alpha u,Y^{y,\xi}_{s-}-u)\Big]du,
			\end{align*}
			which used into \eqref{eq14} gives the equivalence
			\begin{align*}
			&\int_{0}^{\tau_{R,N}}e^{-\rho s}\big(X^{x,\xi}_s-c\big)d\xi_s^c+\sum_{0\leq s\leq \tau_{R,N}}e^{-\rho s}\int_{0}^{\Delta\xi_s}\big(X^{x,\xi}_{s-}-\alpha u - c\big)du - w(x,y)\\
			=& \hspace{0.25cm} - e^{-\rho \tau_{R,N}}w(X^{x,\xi}_{\tau_{R,N}},Y^{y,\xi}_{\tau_{R,N}})+\int_{0}^{\tau_{R,N}}e^{-\rho s}\Big(\mathcal{L}w(X^{x,\xi}_s,Y^{y,\xi}_s)-\rho w(X^{x,\xi}_s,Y^{y,\xi}_s)\Big)ds+M_{\tau_{R,N}}\\&
			+\sum_{0\leq s\leq \tau_{R,N}}e^{-\rho s}\int_{0}^{\Delta\xi_s}\Big[-\alpha w_x(X^{x,\xi}_{s-}-\alpha u,Y^{y,\xi}_{s-}-u)-w_y(X^{x,\xi}_{s-}-\alpha u,Y^{y,\xi}_{s-}-u) \\
			& \hspace{0.25cm} + (X^{x,\xi}_{s-}-\alpha u-c)\Big]du +\int_{0}^{\tau_{R,N}}e^{-\rho s}\Big[-\alpha w_x(X^{x,\xi}_s,Y^{y,\xi}_s)-w_y(X^{x,\xi}_s,Y^{y,\xi}_s)+X^{x,\xi}_s-c\Big]d\xi_s^c.
			\end{align*}
			 Since $w$ satisfies \eqref{HJB1} and $w\geq 0$, by taking expectations on both sides of the latter equation, and using that $\mathbb{E}[M_{\tau_{R,N}}]=0$, we have
			 \begin{align}
			 w(x,y) \geq \mathbb{E}\Big[\int_{0}^{\tau_{R,N}}e^{-\rho s}\big(X^{x,\xi}_s - c\big)d\xi_s^c +\sum_{0\leq s\leq \tau_{R,N}}e^{-\rho s}\int_{0}^{\Delta\xi_s}\big(X^{x,\xi}_{s-}-\alpha u-c\big)du\Big].
			 \end{align}
			
			 We now want to take limits as $N\uparrow \infty$ and $R\uparrow \infty$ on the right-hand side of the equation above. To this end notice that one has a.s.
			 \begin{align}
			\label{eq19}
			\begin{split}
			 &\Big|\int_{0}^{\tau_{R,N}}e^{-\rho s}\big(X^{x,\xi}_s-c\big)d\xi_s^c +\sum_{0\leq s\leq \tau_{R,N}}e^{-\rho s}\int_{0}^{\Delta\xi_s}
			\big(X^{x,\xi}_{s-}-\alpha u - c\big)du\Big| \\
			 \leq\,&\int_{0}^{\infty}e^{-\rho s}|X^{x,\xi}_s|d\xi_s^c+cy+\sum_{s\geq 0:\Delta\xi_s\neq 0}e^{-\rho s}\big(|X^{x,\xi}_{s-}|\Delta\xi_s+\frac{\alpha}{2}(\Delta\xi_s)^2\big),
			\end{split}
			 \end{align}
			 and the right-hand side of \eqref{eq19} is integrable by \eqref{eq17} and \eqref{eq18}. Hence, we can invoke the dominated convergence theorem in order to take limits as $R\uparrow\infty$ and then as $N\uparrow\infty$, so as to get
			\begin{align}\label{eqVT}
			\mathcal{J}(x,y,\xi)\leq w(x,y).
			\end{align}
			Since $\xi\in\mathcal{A}(y)$ is arbitrary, we have
			\begin{align}\label{eq58}
			V(x,y)\leq w(x,y),
			\end{align}
			which yields $V\leq w$ by arbitrariness of $(x,y)$ in $\mathbb{R}\times(0,\infty)$.
			\vspace{0.25cm}
			 
			\emph{Step 2.} Here, we prove that $V(x,y)\geq w(x,y)$ for any $(x,y)\in\mathbb{R}\times(0,\infty)$. Let $\xi^\star\in\mathcal{A}(y)$ satisfying \eqref{eq2VT} and \eqref{eq3VT}, and let $\tau^\star_{R,N}:=\inf\{t\geq 0: X^{x,\xi^\star}_{t}\notin(-R,R)\}\wedge N$, for $N\in\mathbb{N}$. Then, by employing the same arguments as in \emph{Step 1}, all the inequalities become equalities and we obtain 
			\begin{align*}
			&\mathbb{E}\bigg[\int_{0}^{\tau^\star_{R,N}}e^{-\rho s}\big(X^{x,\xi^\star}_s-c\big)d\xi_s^{\star,c}+\sum_{0\leq s\leq \tau^\star_{R,N}}e^{-\rho s}\int_{0}^{\Delta\xi^\star_s}\big(X^{x,\xi^\star}_{s-}-c-\alpha u\big)du\bigg]\\
			&+\mathbb{E}\Big[e^{-\rho \tau^\star_{R,N}}w(X^{x,\xi^\star}_{\tau^\star_{R,N}},Y^{\xi^\star}_{\tau^\star_{R,N}})\Big]= w(x,y),
			\end{align*}
			where $\xi^{\star,c}$ denotes the continuous part of $\xi^\star$. If now
			\begin{align}\label{eq53}
			\lim_{N\uparrow\infty}\lim_{R\uparrow\infty}\mathbb{E}\Big[e^{-\rho \tau^\star_{R,N}}w(X^{x,\xi^\star}_{\tau^\star_{R,N}},Y^{\xi^\star}_{\tau^\star_{R,N}})\Big]= 0,
			\end{align}
			then we can take limits as $R\uparrow\infty$ and $N\uparrow\infty$, and by \eqref{eq19} (with $\xi=\xi^\star$) together with \eqref{eq17} and \eqref{eq18} we find $\mathcal{J}(x,y,\xi^\star)= w(x,y)$. Since clearly $V(x,y)\geq\mathcal{J}(x,y,\xi^\star)$, then $V(x,y)\geq w(x,y)$ for all $(x,y)\in\mathbb{R}\times(0,\infty)$. Hence, using \eqref{eq58}, $V=w$ on $\mathbb{R}\times (0,\infty)$, and therefore on $\mathbb{R}\times [0,\infty)$ because $V(x,0)=0=w(x,0)$ for all $x\in\mathbb{R}$. 
			
			To complete the proof it thus only remains to prove \eqref{eq53}, and we accomplish that in the following. Since $y\mapsto w(x,y)$ is increasing by assumption, we have by \eqref{eq15} and \eqref{eq12} that
			 \begin{align*}
			 0 \leq e^{-\rho\tau^\star_{R,N}}w(X^{x,\xi^\star}_{\tau^\star_{R,N}},Y^{\xi^\star}_{\tau^\star_{R,N}})&\leq e^{-\rho\tau^\star_{R,N}}w(X^{x,\xi^\star}_{\tau^\star_{R,N}},y)\leq e^{-\rho\tau^\star_{R,N}}Ky(1+y)\big(1+|X^{x,\xi^\star}_{\tau^\star_{R,N}}|\big)\\
			 &\leq Ky(1+y)\big[(1+\alpha y)e^{-\rho\tau^\star_{R,N}}+e^{-\rho\tau^\star_{R,N}}|X^{x,0}_{\tau^\star_{R,N}}|\big]\\
			 &\leq Ky(1+y)\big[(1+\alpha y)e^{-\rho\tau^\star_{R,N}}+e^{-\frac{\rho}{2}\tau^\star_{R,N}}\sup\limits_{t\geq 0}e^{-\frac{\rho}{2} t}|X^{x,0}_t|\big].
			 \end{align*} Taking expectations and employing H\"older's inequality
			 \begin{align}
			\label{eq39}
			 \begin{split}
			 &0 \leq \mathbb{E}\big[e^{-\rho \tau^\star_{R,N}}w(X^{x,\xi^\star}_{\tau^\star_{R,N}},Y^{\xi^\star}_{\tau^\star_{R,N}})\big]\\
			 &\leq Ky(1+y)\Big[(1+\alpha y)\mathbb{E}\big[e^{-\rho \tau^\star_{R,N}}\big]+\mathbb{E}\Big[e^{-{\rho} \tau^\star_{R,N}}\Big]^{\frac{1}{2}}\mathbb{E}\Big[\sup\limits_{t\geq 0}e^{-\rho t}|X^{x,0}_t|^2\Big]^{\frac{1}{2}}\Big].
			 \end{split}
			 \end{align}
			 To take care of the third expectation on right hand side of \eqref{eq39}, observe that by It\^o's formula we have (in both cases $b=0$ and $b>0$)
			 \begin{align}
			 \begin{split}\label{eq59}
			 e^{-\rho t}(X^{x,0}_t)^2\leq x^2&+ \int_0^te^{-\rho u}\Big[\rho(X^{x,0}_u)^2+\sigma^2\Big]du\\&+\int_0^t2e^{-\rho u}|X^{x,0}_u|(|a|+b|X^{x,0}_u|)du+2\sigma\sup\limits_{t\geq 0}\bigg|\int_0^te^{-\rho u} X^{x,0}_u dW_u\bigg|.
			 \end{split}
			 \end{align}
			 Notice that $\int_0^{\infty}e^{-2\rho u}\mathbb{E}\big[|X_u^{x,0}|^2\big] du \leq C_1(1+|x|^2)$, for some constant $C_1>0$, and therefore an application of the Burkholder-Davis-Gundy's inequality (see, e.g., Theorem 3.28 in \cite{karatzas}) gives
			 \begin{align}\label{eq43}
			 \mathbb{E}\Big[\sup_{t\geq 0}\Big|\int_0^te^{-\rho u} X^{x,0}_u dW_u\Big|\Big]\leq C_2(1+|x|),
			 \end{align}
			 for a suitable $C_2>0$. Then taking expectations in \eqref{eq59}, employing \eqref{eq43}, we easily obtain that there exists a constant $C_3>0$ such that $$\mathbb{E}\big[\sup\limits_{t\geq 0}e^{-\rho t}|X^{x,0}_t|^2\big]\leq C_3(1+|x|^2).$$ 
			 Hence, when taking limits as $R\uparrow\infty$ and $N\uparrow\infty$ in \eqref{eq39}, the right-hand side of \eqref{eq39} converges to zero, thus proving \eqref{eq53} and completing the proof.		 
\end{proof}
		
\section{Constructing the Optimal Solution}	

We make the guess that the company extracts and sells the commodity only when the current price is sufficiently large. We therefore expect that for any $y>0$ there exists a critical price level $G(y)$ (to be endogenously determined) separating the \emph{waiting region} $\mathbb{W}$ and the \emph{selling region} $\mathbb{S}$ (cf.\ \eqref{def:waiting} and \eqref{def:selling}). In particular, we suppose that
\begin{align}
\mathbb{W}&=\{(x,y)\in\mathbb{R}\times(0,\infty)\,:\, y>0\text{ and }x<G(y)\}\cup(\mathbb{R}\times\{0\}),\\
\mathbb{S}&=\{(x,y)\in\mathbb{R}\times(0,\infty)\,:\, y>0\text{ and }x \geq G(y)\}.
\end{align}

According to such a guess, and with reference to \eqref{HJB1}, the candidate value function $w$ should satisfy 
\begin{align}\label{cond1}
\mathcal{L}w(x,y)-\rho w(x,y)=0, \qquad \mbox{for all}\,\,(x,y) \in \mathbb{W}.
\end{align}
It is well known that \eqref{cond1} admits two fundamental strictly positive solutions $\varphi(x)$ and $\psi(x)$, with the former one being strictly decreasing and the latter one being strictly increasing. Therefore, any solution to \eqref{cond1} can be written as $$w(x,y)=A(y)\psi(x)+B(y)\varphi(x),\quad(x,y)\in\mathbb{W},$$ for some functions $A(y)$ and $B(y)$ to be found. In both cases $b=0$ and $b>0$ (cf.\ \eqref{affectedX}), the function $\varphi$ increases exponentially to $+\infty$ as $x\downarrow-\infty$ (see, e.g., Appendix 1 in \cite{BorSal}). In light of the growth conditions of $V$ proved in Proposition \ref{GrowthV}, we therefore guess $B(y) = 0$ so that 
\begin{align}\label{eq45}
w(x,y)=A(y)\psi(x)
\end{align} for any $(x,y)\in\mathbb{W}$.

For all $(x,y)\in\mathbb{S}$, $w$ should instead satisfy
\begin{align}\label{cond2}
-\alpha w_x(x,y)-w_y(x,y)+x-c=0,
\end{align}
implying 
\begin{align}\label{cond3}
-\alpha w_{xx}(x,y)-w_{yx}(x,y)+1=0.
\end{align}
 
To find $G(y)$ and $A(y)$, $y>0$, we impose that $w \in C^{2,1}$, and therefore by \eqref{eq45}, \eqref{cond2}, and \eqref{cond3} we obtain for all $(x,y)\in\overline{\mathbb{W}}\cap\mathbb{S}$, i.e. $x=G(y)$, that
\begin{align}
\label{eq1}
-\alpha A(y)\psi'(x)-A'(y)\psi(x)+x-c&=0 \quad \text{at} \quad {x=G(y)},\\
\label{eq2}
-\alpha A(y)\psi''(x)-A'(y)\psi'(x)+1& =0 \quad \text{at} \quad {x=G(y)}.
\end{align}
From \eqref{eq1} and \eqref{eq2} one can easily derive that $A(y)$ and $G(y)$, $y>0$, satisfy
\begin{align}\label{cond4}
-\alpha A(y)\left(\psi'(x)^2-\psi(x)\psi''(x)\right)+(x-c)\psi'(x)-\psi(x)= 0 \quad \text{at} \quad {x=G(y)}.
\end{align}

In the following we continue our analysis by studying separately the cases $b=0$ and $b>0$, corresponding to a fundamental price of the commodity that is a drifted Brownian motion and an Ornstein-Uhlenbeck process, respectively. We will see that the form of the optimal extraction rule substantially differs among these two cases, and we will also provide a quantitative explanation of this by identifying an optimal stopping problem related to our optimal extraction problem (see Section \ref{sec:relatedOS} and Remark \ref{rem:OS} below).


\subsection{$b=0$: The Case of a Drifted Brownian Motion Fundamental Price}
\label{sec:ABM}

We start with the simpler case $b=0$, and we therefore study the company's extraction problem \eqref{ValueFnc} when the fundamental commodity's price is a drifted Brownian motion. Dynamics \eqref{Xuncontrolled} with $b=0$ yield
$$dX^{x,\xi}_t = adt + \sigma dW_t-\alpha d\xi_t,\qquad X^{x,\xi}_{0-} = x\in\mathbb{R},$$
for any $\xi\in\mathcal{A}(y)$, and consequently \eqref{cond1} reads as 
\begin{align}
\label{eq48}
\frac{\sigma^2}{2}w_{xx}(x,y)+aw_x(x,y)-\rho w(x,y)=0,\quad (x,y)\in\mathbb{R}\times(0,\infty).
\end{align} 
The increasing fundamental solution $\psi$ to the latter equation is given by   
\begin{align}
\label{eq47}
\psi(x)=e^{nx} \qquad \text{ with } \qquad n:=-\frac{a}{\sigma^2}+\sqrt{\Big(\frac{a}{\sigma^2}\Big)^2+2\frac{\rho}{\sigma^2}}>0.
\end{align}
For future use, we notice that $n$ solves $B(n)=0$ with
\begin{align}\label{eqB}
B(u):=\frac{\sigma^2}{2}u^2+au-\rho,\quad u \in \mathbb{R}.
\end{align}

Upon observing that $\psi'(x)^2-\psi(x)\psi''(x)=0$ for all $x\in\mathbb{R}$, we see that any explicit dependency on $y$ disappears in \eqref{cond4}, and we therefore obtain that the critical price $G(y)$ identifies for any $y>0$ with the constant value
\begin{align}
\label{xstarABM}
x^\star=c+\frac{1}{n},
\end{align} 
which uniquely solves the equation $(x^\star-c)n-1=0$ (cf.\ \eqref{cond4} and \eqref{eq47}).

Moreover, by using either \eqref{eq1} or \eqref{eq2}, and by imposing $A(0)=0$ (since we must have $V(x,0)=0$ for all $x\in\mathbb{R}$; cf.\ Theorem \ref{VerificationTheorem}), the function $A$ in \eqref{eq45} is given by 
$$A(y):=\frac{1}{\alpha n^2}e^{-cn-1}\big(1-e^{-\alpha ny}\big),\quad y\geq 0.$$ 

In light of the previous findings, the candidate \emph{waiting region} $\mathbb{W}$ is given by
\begin{align*}
\mathbb{W}&=\{(x,y)\in\mathbb{R}\times (0,\infty) \,:\, y>0\text{ and }x<x^\star\}\cup(\mathbb{R}\times\{0\}),
\end{align*}
and we expect that the \emph{selling region} $\mathbb{S}$ is such that $\mathbb{S}=\mathbb{S}_1\cup\mathbb{S}_2,$ where
\begin{align*}
\mathbb{S}_1&:=\{(x,y)\in\mathbb{R}\times(0,\infty)\,:\, x\geq x^\star\text{ and }y\leq (x-x^\star)/{\alpha} \},\\
\mathbb{S}_2&:=\{(x,y)\in\mathbb{R}\times(0,\infty)\,:\, x\geq x^\star\text{ and }y> (x-x^\star)/{\alpha} \}.
\end{align*}
In $\mathbb{S}_1$, we believe that it is optimal to deplete immediately the reservoir. In $\mathbb{S}_2$ the company should make a lump sum extraction of size $(x-x^\star)/\alpha$, and then sell the commodity continuously and in such a way that the joint process $(X,Y)$ is kept inside $\overline{\mathbb{W}}$, until there is nothing left in the reservoir. These considerations suggest to introduce the candidate value function
\begin{align}
\label{CaValueABM}
w(x,y):=\begin{cases}
\frac{1}{\alpha n^2}e^{(x-c)n-1}(1-e^{-\alpha ny}),\quad &\text{if }(x,y)\in\mathbb{W},\\
\frac{1}{\alpha n^2}\left(1-e^{-\alpha n (y-\frac{x-x^\star}{\alpha})}\right)+(x-c)\big(\frac{x-x^\star}{\alpha}\big)-\frac{1}{2\alpha}(x-x^\star)^2\quad &\text{if }(x,y)\in\mathbb{S}_2,\\
(x-c)y-\frac{1}{2}\alpha y^2,\quad &\text{if }(x,y)\in\mathbb{S}_1.
\end{cases}
\end{align}
Notice that the first term in the second line of \eqref{CaValueABM} is the continuation value starting from the new state $(x^\star,y-\frac{x-x^\star}{\alpha})$, and that $w$ above is continuous by construction. From now on, we will refer to the critical price level $x^{\star}$ as to the \emph{free boundary}. 

The next proposition shows that $w$ actually identifies with the value function $V$.
\begin{proposition}
	The function $w:\mathbb{R}\times[0,\infty)\mapsto[0,\infty)$ defined in \eqref{CaValueABM} is a $C^{2,1}(\mathbb{R}\times [0,\infty))$ solution to the HJB equation \eqref{HJB1} such that
	\begin{equation}
	\label{bounds-w}
	0 \leq w(x,y) \leq Ky(1+y)(1+|x|), \quad (x,y)\in \mathbb{R}\times [0,\infty),
	\end{equation}
	for a suitable constant $K>0$.
	
Moreover, it identifies with the value function $V$ from \eqref{ValueFnc}, and the admissible control 
\begin{align}
\label{ExtractABM}
\xi^\star_t:=y\wedge\sup_{0\leq s\leq t}\frac{1}{\alpha}\big[x-x^\star+ a s+\sigma W_s\big]^+,\quad t\geq 0, \qquad \xi^\star_{0-}=0,
\end{align}
with $x^{\star}$ as in \eqref{xstarABM}, is an optimal extraction strategy.
\end{proposition}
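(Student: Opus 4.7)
The plan is to verify the hypotheses of Theorem \ref{VerificationTheorem} for the candidate $w$ in \eqref{CaValueABM} and the candidate control $\xi^\star$ in \eqref{ExtractABM}. This amounts to three tasks: (a) check that $w \in C^{2,1}(\mathbb{R}\times[0,\infty))$, is nondecreasing in $y$, vanishes on $\{y=0\}$, and satisfies the growth bound \eqref{bounds-w}; (b) verify the HJB variational inequality \eqref{HJB1}; (c) show that $\xi^\star \in \mathcal{A}(y)$ and satisfies the state-space conditions \eqref{eq2VT}--\eqref{eq3VT}.

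Step (a) is essentially by construction. The $C^{2,1}$-pasting at $\{x = x^\star\}$ between $\mathbb{W}$ and $\mathbb{S}_2$ is exactly what the choices of $x^\star$ and $A(y)$ were designed to enforce via \eqref{eq1}--\eqref{eq2}, and continuity of $w$ together with the relevant first- and second-order $x$-derivatives across the diagonal $\{y = (x-x^\star)/\alpha\}$ between $\mathbb{S}_2$ and $\mathbb{S}_1$ reduces to algebraic identities after substituting $y = (x-x^\star)/\alpha$. Nonnegativity, monotonicity in $y$, the polynomial growth bound, and the boundary condition $w(x,0) = 0$ all follow by direct inspection of each piece of \eqref{CaValueABM}.

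For step (b), inside $\mathbb{W}$ the equality $\mathcal{L}w - \rho w = 0$ holds by the defining property of $\psi$ in \eqref{eq47}. A direct calculation reduces $-\alpha w_x - w_y + (x - c)$ to $\tfrac{1}{n}(z - e^{z-1})$ with $z := (x - c)n$, and the elementary convexity inequality $e^{z-1} \geq z$, with equality precisely at $z = 1$ (that is, at $x = x^\star$), gives the required nonpositivity. Inside $\mathbb{S}_1 \cup \mathbb{S}_2$, the identity $-\alpha w_x - w_y + (x-c) = 0$ holds by construction. To show $\mathcal{L}w - \rho w \leq 0$ in each subregion I would exploit the characteristic identity $\tfrac{\sigma^2}{2}n^2 + an = \rho$ from \eqref{eqB} to simplify the resulting expressions; this reduces the problem, in both $\mathbb{S}_1$ and $\mathbb{S}_2$, to the scalar inequality $a - \tfrac{\rho}{2}(x-c) - \tfrac{\rho}{2n} \leq 0$, which holds on $\{x \geq x^\star\}$ because $x - c \geq 1/n$ there and the identity $\rho = an + \tfrac{\sigma^2}{2}n^2$ forces $\rho > an$.

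For step (c), the process $\xi^\star$ in \eqref{ExtractABM} is by definition nonnegative, $\mathbb{F}$-adapted, nondecreasing, c\`adl\`ag, bounded by $y$, and starts from $\xi^\star_{0-} = 0$, so it is admissible. Substituting $\xi^\star$ into \eqref{affectedX} gives the pathwise identity $X^{x,\xi^\star}_t = x + at + \sigma W_t - \alpha\xi^\star_t$; by definition of the running supremum in \eqref{ExtractABM}, this implies $X^{x,\xi^\star}_t \leq x^\star$ as long as $\xi^\star_t < y$, while once $\xi^\star_t = y$ one has $Y^{y,\xi^\star}_t = 0$ and the state lies in $\mathbb{R}\times\{0\}\subset\overline{\mathbb{W}}$, yielding \eqref{eq2VT}. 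Moreover, the running supremum increases only at times $t$ with $\alpha \xi^\star_t = x - x^\star + at + \sigma W_t$, i.e.\ with $X^{x,\xi^\star}_t = x^\star \in \mathbb{S}$, and the possible initial jump occurs in $\mathbb{S}$, so \eqref{eq3VT} follows. Theorem \ref{VerificationTheorem} then gives $w = V$ and optimality of $\xi^\star$. The main technical obstacle is the verification of $\mathcal{L}w - \rho w \leq 0$ in the selling region, where $w$ is piecewise defined and does not inherit the PDE; the key is to collapse the calculation, via the characteristic equation $B(n)=0$, to a single scalar inequality that is guaranteed by the defining equation of $x^\star$.
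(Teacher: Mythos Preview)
Your proposal is correct and follows the same verification-theorem strategy as the paper, with the same ingredients: explicit derivative checks for $C^{2,1}$-regularity, the exponential inequality $e^{z-1}\geq z$ for the gradient constraint in $\mathbb{W}$, the characteristic identity $B(n)=0$ to simplify $\mathcal{L}w-\rho w$ in the selling region, and the running-supremum representation of $\xi^\star$ for \eqref{eq2VT}--\eqref{eq3VT}. Your treatment of $\mathcal{L}w-\rho w\leq 0$ in $\mathbb{S}$ is in fact slightly cleaner than the paper's: where the paper handles $\mathbb{S}_1$ and $\mathbb{S}_2$ separately via the auxiliary functions $H_1,H_2$ and monotonicity arguments in $y$ and $x$ respectively, you observe that both cases factor (after using $y\leq (x-x^\star)/\alpha$ in $\mathbb{S}_1$ and extracting $\tfrac{x-x^\star}{\alpha}$ in $\mathbb{S}_2$) to the single inequality $a-\tfrac{\rho}{2}(x-c)-\tfrac{\rho}{2n}\leq 0$ on $\{x\geq x^\star\}$, which follows from $x-c\geq 1/n$ and $\rho/n\geq a$.
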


\begin{proof}
The proof is organized in steps. 

\vspace{0.25cm}

\emph{Step 1.} We start proving that $w\in C^{2,1}(\mathbb{R}\times[0,\infty))$. One can easily check that $w(x,0)=0$ for any $x\in\mathbb{R}$, and that $w$ is continuous on $\mathbb{R}\times[0,\infty)$ (recall also the comment after \eqref{CaValueABM}). For all $(x,y)\in\mathbb{W}$ we derive from \eqref{CaValueABM} 
\begin{align}\label{eq70}
w_x(x,y)=\frac{1}{\alpha n}e^{(x-c)n-1}(1-e^{-\alpha ny}),\quad w_{xx}(x,y)=\frac{1}{\alpha}e^{(x-c)n-1}(1-e^{-\alpha ny}), 
\end{align}
and
\begin{align}
\label{eq61}
w_y(x,y)=\frac{1}{n}e^{(x-c)n-1}e^{-\alpha ny}.
\end{align} 

Also, for all $(x,y)\in\mathbb{S}_2$ we find from \eqref{CaValueABM} by direct calculations that
\begin{align}\label{eq60}
w_x(x,y)=-\frac{1}{\alpha n}e^{-\alpha n (y-\frac{x-x^\star}{\alpha})}+\frac{x-c}{\alpha},\quad w_{xx}(x,y)=\frac{1}{\alpha}\left(1-e^{-\alpha n (y-\frac{x-x^\star}{\alpha})}\right),
\end{align}
and 
\begin{align}\label{eq73}
w_{y}(x,y)=\frac{1}{n}e^{-\alpha n (y-\frac{x-x^\star}{\alpha})}.
\end{align}
Finally, for $(x,y)\in\mathbb{S}_1$ we have
\begin{align}\label{eq72}
w_x(x,y)=y,\quad w_{xx}(x,y)=0,\quad w_y(x,y)=x-c-\alpha y.
\end{align}
From the previous expressions it is now straightforward to check that $w\in C^{2,1}(\mathbb{R}\times [0,\infty))$ upon recalling $x^\star=c+\frac{1}{n}$ (cf.\ \eqref{xstarABM}).

\vspace{0.25cm}

\emph{Step 2.} Here we prove that $w$ solves HJB equation \eqref{HJB1}. By construction we have $-\alpha w_x(x,y)-w_y(x,y)+x-c=0$ for $(x,y)\in\mathbb{S}$, and $\cL w(x,y)-\rho w(x,y)=0$ for $(x,y)\in\mathbb{W}$. Hence it remains to prove that $-\alpha w_x(x,y)-w_y(x,y)+x-c\leq 0$ for $(x,y)\in\mathbb{W}$ and $\cL w(x,y)-\rho w(x,y)\leq 0$ for $(x,y)\in\mathbb{S}$. This is accomplished in the following.

On the one hand, letting $(x,y)\in\mathbb{W}$ we obtain from the first equation in \eqref{eq70} and \eqref{eq61} that
\begin{align*}
-\alpha w_x(x,y)-w_y(x,y)+x-c=-\frac{1}{n}e^{(x-c)n-1}+x-c\leq 0,
\end{align*}
where the last inequality is due to $e^{(x-c)n-1}\geq (x-c)n$, which derives from the well-known property of the exponential function $e^q\geq q+1$ for all $q\in\mathbb{R}$. 

On the other hand, for $(x,y)\in\mathbb{S}_1$ we find from the third line of \eqref{CaValueABM} and \eqref{eq72} that
\begin{align*}
\cL w(x,y)-\rho w(x,y)=ay-\rho(x-c)y+\frac{\alpha}{2}\rho y^2=:H_1(x,y).
\end{align*}
We now want to prove that $H_1(x,y)\leq 0$ for all $(x,y)\in\mathbb{S}_1$. Because $y\leq\frac{x-x^\star}{\alpha}$ with $x^\star=c+\frac{1}{n}$, we find
\begin{align*}
\frac{\partial H_1}{\partial y}(x,y)=a-\rho(x-c)+\alpha\rho y\leq a-\frac{\rho}{n}.
\end{align*}
In order to study the sign of $\frac{\partial H_1}{\partial y}$, we need to distinguish two cases.
If $a\leq 0$, then it follows immediately $\frac{\partial H_1}{\partial y}(x,y) \leq 0$. If $a>0$, then recall $B$ from \eqref{eqB} and notice that because $u\mapsto B(u)$ is increasing on $(-a/\sigma^2,\infty)\supset \mathbb{R}_+$, $B(n)=0$, and $B(\frac{\rho}{a})>0$, one has $\frac{\rho}{a}\geq n$.
Hence again $\frac{\partial H_1}{\partial y}(x,y)\leq 0$. Since now $\lim_{y\downarrow 0}H_1(x,y)=0$ for any $x\geq x^\star$, then we have just proved that $H_1(x,y)\leq 0$ for all $y\leq\frac{x-x^\star}{\alpha}$, and for any $x\geq x^\star$. Hence, $\cL w-\rho w\leq 0$ in $\mathbb{S}_1$.

Also, for $(x,y)\in\mathbb{S}_2$, we find 
\begin{align*}
\cL w(x,y)-\rho w(x,y)=\frac{a}{\alpha}(x-x^\star)-\rho(x-c)\big(\frac{x-x^\star}{\alpha}\big)+\frac{\rho}{2\alpha}(x-x^\star)^2=:H_2(x).
\end{align*}
To obtain the first equality in the equation above we have used the second line of \eqref{CaValueABM}, \eqref{eq60}, and that $n$ solves $B(n)=0$ with $B$ as in \eqref{eqB}. Notice that $H_2(x^\star)=0$ and $H_2^\prime(x)=\frac{1}{\alpha}\big(a-{\rho}(x-c)\big)$. If $a\leq 0$, we clearly have that $H_2'(x)\leq 0$, since $x\geq x^\star>c$. If $a>0$, then $H_2'(x)\leq 0$ if and only if $x\geq c+\frac{a}{\rho}$, but the latter inequality holds for any $x\geq x^\star$ since we have proved above that for $a>0$ we have $\frac{\rho}{a}\geq n$, and therefore, $x^\star=c+\frac{1}{n}\geq c+\frac{a}{\rho}$. Hence, in any case, $H_2'(x)\leq 0$ for all $x\geq x^\star$, and then $\cL w-\rho w\leq 0$ in $\mathbb{S}_2$. 

Combining all the previous findings we have that $w$ is a $C^{2,1}(\mathbb{R}\times [0,\infty))$ solution to the HJB equation \eqref{HJB1}.
\vspace{0.25cm}

\emph{Step 3.} Here we verify that $w$ satisfies all the requirements needed to apply Theorem \ref{VerificationTheorem}.

The fact that $y\mapsto w(x,y)$ is increasing in $\mathbb{W}$ and $\mathbb{S}_2$ easily follows from \eqref{eq61} and \eqref{eq73}, respectively. The monotonicity of $w(x,\cdot)$ in $\mathbb{S}_1$ is instead due to \eqref{eq72} and to the fact that $y \leq (x-x^{\star})/\alpha$ in $\mathbb{S}_1$ and $x^{\star}>c$.

In order to show the upper bound in \eqref{bounds-w}, notice that 
\begin{align}
\label{eq64}
w(x,y)\leq \frac{1}{\alpha n^2},\quad\text{for all $(x,y)\in\mathbb{W}$},
\end{align} 
since $x < x^{\star}$. Further, we find for all $(x,y)\in\mathbb{S}_2$ that 
\begin{align}\label{eq65}
\begin{split}
w(x,y)&=\frac{1}{\alpha n^2}\left(1-e^{-\alpha n (y-\frac{x-x^\star}{\alpha})}\right)+(x-c)\left(\frac{x-x^\star}{\alpha}\right)-\frac{1}{2\alpha}(x-x^\star)^2\\
&\leq\frac{1}{\alpha n^2}+(x-c)\left(\frac{x-x^\star}{\alpha}\right) \leq\frac{1}{\alpha n^2}+(x-c)y,
\end{split}
\end{align} 
where we have used that $y > (x-x^{\star})/\alpha$ for all $(x,y)\in\mathbb{S}_2$. Finally, for all $(x,y)\in\mathbb{S}_1$ it is clear that
\begin{align}\label{eq66}
w(x,y)=(x-c)y-\frac{1}{2}\alpha y^2\leq(x-c)y.
\end{align}
Hence, from \eqref{eq64}-\eqref{eq66} we see that $w$ satisfies the required growth condition.

We now show the nonnegativity of $w$. For all $(x,y)\in\mathbb{W}$ one clearly has $w(x,y)\geq 0$, and one also finds that for all $(x,y)\in\mathbb{S}_2$ 
\begin{align*}
w(x,y)=&\frac{1}{\alpha n^2}\left(1-e^{-\alpha n (y-\frac{x-x^\star}{\alpha})}\right)+(x-c)\left(\frac{x-x^\star}{\alpha}\right)-\frac{1}{2\alpha}(x-x^\star)^2\\
=&\frac{1}{\alpha n^2}\left(1-e^{-\alpha n (y-\frac{x-x^\star}{\alpha})}\right)+\left(\frac{x-x^\star}{\alpha}\right)\left[\frac{1}{2}(x-c)+\frac{1}{2}(x^\star-c)\right]
\geq 0,
\end{align*} 
where the last inequality is due to $y>\frac{x-x^\star}{\alpha}$ and $x\geq x^\star\geq c$. Moreover, for $(x,y)\in\mathbb{S}_1$, one obtains 
$$w(x,y)=(x-c)y-\frac{1}{2}\alpha y^2\geq y\Big[x-c-\frac{1}{2}(x-x^\star)\Big]=y\left[\frac{1}{2}(x-c)+\frac{1}{2}(x^\star-c)\right]>0,$$ where we have used $y \leq (x-x^\star)/\alpha$ in the first inequality, and $x\geq x^\star>c$ in the last inequality. Thus, $w$ is nonnegative on $\mathbb{R}\times[0,\infty)$.
\vspace{0.25cm}

\emph{Step 4.} The control $\xi^\star$ given by \eqref{ExtractABM} is admissible, and satisfies \eqref{eq2VT} and \eqref{eq3VT}. Since by \emph{Step 1} and \emph{Step 2} $w$ is a $C^{2,1}$-solution to the HJB equation \eqref{HJB1}, and by \emph{Step 3} satisfies all the requirements of Theorem \ref{VerificationTheorem}, we conclude that $$w(x,y)=V(x,y),\quad(x,y)\in\mathbb{R}\times[0,\infty),$$ by Theorem \ref{VerificationTheorem}. 
\end{proof}

\begin{figure}
	\centering
	\includegraphics[width=0.7\textwidth]{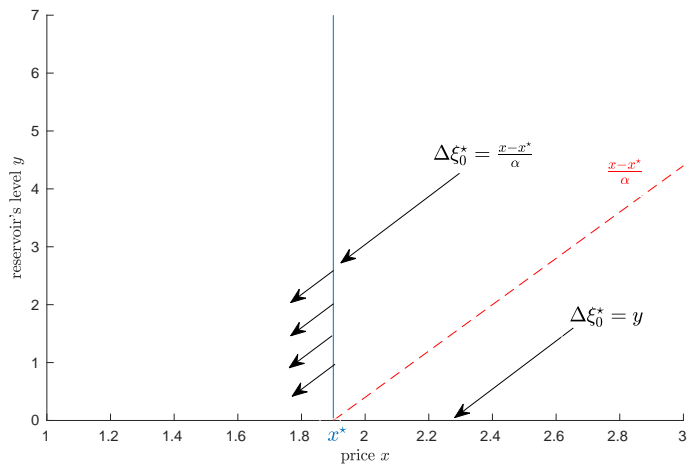}
	\
\caption{A graphical illustration of the optimal extraction rule $\xi^{\star}$ (cf.\ \eqref{ExtractABM}) and of the free boundary $x^{\star}$. The plot has been obtained by using $a=0.4,\, \sigma=0.8,\, \rho=3/8,\, c=0.3,\, \alpha=0.25$. The optimal extraction rule prescribes the following. In the region $\{(x,y) \in \mathbb{R} \times (0,\infty):\, x < x^{\star}\}$ it is optimal not to extract. If at initial time $(x,y)$ is such that $x > x^{\star}$ and $y \leq (x-x^{\star})/\alpha$, then the reservoir should be immediately depleted. On the other hand, if $(x,y)$ is such that $x> x^{\star}$ and $y > (x-x^{\star})/\alpha$, then one should make a lump sum extraction of size $(x-x^{\star})/\alpha$, and then keep on extracting until the commodity is exhausted by just preventing the price to rise above $x^{\star}$.}
\end{figure}

\begin{remark}
Notice that, as $\alpha \downarrow 0$, the optimal extraction rule $\xi^{\star}$ of \eqref{ExtractABM} converges to the extraction rule $\widehat{\xi}$ that prescribes to instantaneously deplete the reservoir as soon as the price reaches $x^{\star}$; i.e., defining, for any given and fixed $(x,y) \in \mathbb{R} \times [0,\infty)$, $\widehat{\tau}(x,y):=\inf\{t\geq0:\, x + a t + \sigma W_t \geq x^{\star}\}$, one has $\widehat{\xi}_t=0$ for all $t<\widehat{\tau}(x,y)$ and $\widehat{\xi}_t=y$ for all $t\geq\widehat{\tau}(x,y)$. The latter control can be easily checked to be optimal for the extraction problem in which the company does not have market impact (i.e.\ $\alpha=0$). 
\end{remark}


\subsection{$b>0$: The Case of a Mean-Reverting Fundamental Price}
\label{sec:OU}

In this section we assume $b>0$, and we study the optimal extraction problem \eqref{ValueFnc} when the commodity's price evolves as a linearly controlled Ornstein-Uhlenbeck process
$$dX_t^{x,\xi}=(a-bX_t^{x,\xi}) dt+\sigma dW_t-\alpha d\xi_t,\quad X^{x,\xi}_{0-} = x\in\mathbb{R},$$ 
for any $\xi\in\mathcal{A}(y)$. Before proceeding with the construction of a candidate optimal solution for \eqref{ValueFnc}, in the next lemma we recall some important properties of the (uncontrolled) Ornstein-Uhlenbeck process that will be needed in our subsequent analysis. Their proof can be found in Appendix \ref{App:A}.

\begin{lemma}
\label{Properties}
	Let $\mathcal{L}$ denote the infinitesimal generator of the uncontrolled Ornstein-Uhlenbeck process (cf.\ \eqref{InfinitesOp}). Then the following hold true.
	\begin{itemize}
		\item [(1)] The strictly increasing fundamental solution to the ordinary differential equation $\cL u-\rho u=0$ is given by
		\begin{align}
		\label{eq34}
		\psi(x)=e^{\frac{(bx-a)^2}{2\sigma^2b}}D_{-\frac{\rho}{b}}\bigg(-\frac{(bx-a)}{\sigma b}\sqrt{2b}\bigg),
		\end{align}
		where 
		\begin{align}\label{eq35}
		D_\beta(x):=\frac{e^{-\frac{x^2}{4}}}{\Gamma(-\beta)}\int_{0}^{\infty}t^{-\beta-1}e^{-\frac{t^2}{2}-xt}dt,\quad \beta<0,
		\end{align} 
		is the Cylinder function of order $\beta$ and $\Gamma(\,\cdot\,)$ is the Euler's Gamma function (see, e.g., Chapter VIII in \cite{bateman}). Moreover, $\psi$ is strictly convex.
		
		\item [(2)] Denoting by $\psi^{(k)}$ the k-th derivative of $\psi$, $k\in\mathbb{N}$, one has that $\psi^{(k)}$ is strictly convex and it is (up to a constant) the positive strictly increasing fundamental solution to $(\cL-(\rho+kb))u=0$.
		
		\item [(3)] For any $k\in\mathbb{N} \cup \{0\}$, $\psi^{(k+2)}(x)\psi^{(k)}(x)-\psi^{(k+1)}(x)^2>0$ for all $x\in\mathbb{R}$.
	\end{itemize}
\end{lemma}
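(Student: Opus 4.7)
For part (1), the plan is to reduce the linear ODE $\tfrac{\sigma^2}{2}u''+(a-bx)u'-\rho u=0$ to Weber's parabolic cylinder equation. Substituting $u(x)=e^{(bx-a)^2/(2\sigma^2 b)}\,v(z)$ with $z:=-(bx-a)\sqrt{2b}/(\sigma b)$ and simplifying, a direct computation will bring the equation into the canonical Weber form
\begin{equation*}
v''(z)+\Bigl(\tfrac{1}{2}-\tfrac{\rho}{b}-\tfrac{z^2}{4}\Bigr)v(z)=0,
\end{equation*}
of index $\nu=-\rho/b$. Because $\rho/b>0$, the parabolic cylinder function $D_{-\rho/b}$ admits the integral representation \eqref{eq35}, and unwinding the substitution produces the closed form \eqref{eq34} together with $\psi>0$ on $\mathbb{R}$. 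Strict convexity of $\psi$ I would defer to part (2), since it amounts to $\psi''>0$.

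For part (2), the plan is to differentiate the ODE: since $\mathcal{L}$ in \eqref{InfinitesOp} has drift $a-bx$, a direct differentiation of the equation gives $(\mathcal{L}u-\rho u)'=\mathcal{L}u'-(\rho+b)u'$, and by induction $(\mathcal{L}-(\rho+kb))\psi^{(k)}=0$ for every $k\geq 0$. To identify $\psi^{(k)}$ with (a positive multiple of) the strictly increasing fundamental solution of the shifted ODE, I must verify that $\psi^{(k)}>0$, $\psi^{(k)}$ is strictly increasing, and $\psi^{(k)}(x)\to 0$ as $x\to -\infty$ (the latter rules out any contribution of the decreasing fundamental solution of $(\mathcal{L}-(\rho+kb))u=0$, which blows up at $-\infty$ for OU-type diffusions). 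All three properties will follow from an explicit integral representation of $\psi$ derived from \eqref{eq34}--\eqref{eq35}: collapsing the Gaussian prefactor $e^{(bx-a)^2/(2\sigma^2 b)}$ against the factor $e^{-z^2/4}$ hidden in $D_{-\rho/b}$, and then applying the linear change of variable $s=t\sqrt{2}/(\sigma\sqrt{b})$ in the resulting integral, one arrives at
\begin{equation*}
\psi(x)=C_0\int_0^{\infty}s^{\rho/b-1}\,e^{-\sigma^2 b s^2/4-as}\,e^{bsx}\,ds,\qquad C_0:=\frac{(\sigma\sqrt{b}/\sqrt{2})^{\rho/b}}{\Gamma(\rho/b)}>0.
\end{equation*}
The Gaussian decay in $s$ justifies differentiating $k$ times under the integral, giving
\begin{equation*}
\psi^{(k)}(x)=C_0\,b^k\int_0^{\infty}s^{k+\rho/b-1}\,e^{-\sigma^2 b s^2/4-as}\,e^{bsx}\,ds>0,
\end{equation*}
so each $\psi^{(k)}$ is strictly positive (hence $\psi^{(k-1)}$ strictly increasing when $k\geq 1$, and $\psi^{(k)}$ strictly convex because $\psi^{(k+2)}>0$), and dominated convergence (the integrand is bounded uniformly for $x\leq 0$ by an integrable function of $s$) yields $\psi^{(k)}(x)\to 0$ as $x\to -\infty$. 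Specialising to $k=0$ also closes the strict convexity claim in part (1).

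For part (3), the plan is to exploit the same integral representation once more. Setting $d\mu_k(s):=s^{k+\rho/b-1}e^{-\sigma^2 b s^2/4-as}\,ds$, a positive measure on $(0,\infty)$, one has $\psi^{(k+j)}(x)=C_0\,b^{k+j}\int_0^{\infty}s^{j}e^{bsx}\,d\mu_k(s)$ for $j=0,1,2$. Writing the products as double integrals and symmetrising in the dummy variables $s_1\leftrightarrow s_2$ gives
\begin{equation*}
\psi^{(k+2)}(x)\psi^{(k)}(x)-\psi^{(k+1)}(x)^2=\tfrac{1}{2}C_0^{2}b^{2k+2}\iint_{(0,\infty)^2}(s_1-s_2)^2\,e^{b(s_1+s_2)x}\,d\mu_k(s_1)\,d\mu_k(s_2),
\end{equation*}
which is strictly positive since the integrand is non-negative and vanishes only on the diagonal.

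The main obstacle I anticipate is part (1): executing the change of variables to Weber's form while tracking every constant so as to recover exactly the formula \eqref{eq34}, and rigorously justifying the passage to the integral representation of $\psi$ under the standing assumption $\rho,b>0$ (in particular, integrability at $s=0$ requires $\rho/b>0$). Once this analytic form of $\psi$ is in hand, parts (2) and (3) reduce to routine differentiation under the integral sign and a standard symmetrisation identity, respectively.
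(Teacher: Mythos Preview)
Your proposal is correct and follows essentially the same route as the paper. Both arguments rest on the integral representation obtained by collapsing the Gaussian prefactor in \eqref{eq34} against the $e^{-z^2/4}$ in \eqref{eq35}, then differentiating under the integral to show each $\psi^{(k)}$ is positive and solves the shifted ODE; for part~(3) the paper invokes the (strict) Cauchy--Schwarz inequality on the product integral, which is exactly the symmetrisation identity you wrote out. Your treatment is slightly more careful in that you explicitly check $\psi^{(k)}(x)\to 0$ as $x\to-\infty$ to rule out any component from the decreasing fundamental solution, a point the paper leaves implicit.
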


For any $y>0$, from \eqref{cond4} we find a representation of $A(y)$ in terms of $G(y)$; that is,
\begin{align}
\label{A(y)}
A(y)=\frac{(G(y)-c)\psi'(G(y))-\psi(G(y))}{\alpha[\psi'(G(y))^2-\psi''(G(y))\psi(G(y))]}.
\end{align}
Notice that the denominator of $A(y)$ is nonzero due to Lemma \ref{Properties}-(3). 

For our subsequent analysis it is convenient to look at $G$ as a function of the state variable $y \in (0,\infty)$, and, in particular, we conjecture that it is the inverse of an injective nonnegative function $F$ to be endogenously determined together with its domain and its behavior. This is what we are going to do in the following. From now on we set $G\equiv F^{-1}$.

Since we have $V(x,0)=0$ (cf.\ Theorem \ref{VerificationTheorem}) for any $x \in \mathbb{R}$, we impose $A(0)=0$. Then, from \eqref{A(y)} we obtain the boundary condition 
\begin{equation}
\label{def:x0}
x_0:=F^{-1}(0)\quad \text{solving} \quad (x_0-c){\psi'(x_0)}-{\psi(x_0)}=0.
\end{equation}
In fact, existence and uniqueness of such $x_0$ is given by the following (more general) result. Its proof can be found in Appendix \ref{App:A}.

\begin{lemma}
\label{UniqueSol}
Recall that $\psi^{(k)}$ denotes the derivative of order $k$, $k\in\mathbb{N} \cup \{0\}$, of $\psi$. Then, for any $k\in\mathbb{N} \cup \{0\}$, there exists a unique solution on $(c,\infty)$ to the equation $(x-c){\psi^{(k+1)}(x)}-{\psi^{(k)}(x)}=0$. In particular, there exists $x_0>c$ uniquely solving $(x-c){\psi'(x)}-{\psi(x)}=0$ and $x_{\infty}>c$ uniquely solving $(x-c){\psi''(x)}-{\psi'(x)}=0$.
\end{lemma}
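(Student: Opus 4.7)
\textbf{Plan for the proof of Lemma \ref{UniqueSol}.} Fix $k \in \mathbb{N} \cup \{0\}$ and define the auxiliary function
\begin{equation*}
h_k(x) := (x-c)\psi^{(k+1)}(x) - \psi^{(k)}(x), \qquad x \in (c,\infty).
\end{equation*}
The statement amounts to showing that $h_k$ has a unique zero in $(c,\infty)$. Since by Lemma \ref{Properties}-(2) we have $\psi^{(k+1)}(x) > 0$ for every $x \in \mathbb{R}$, zeros of $h_k$ coincide with zeros of the rescaled function
\begin{equation*}
q_k(x) := \frac{h_k(x)}{\psi^{(k+1)}(x)} = (x-c) - R_k(x), \qquad R_k(x) := \frac{\psi^{(k)}(x)}{\psi^{(k+1)}(x)},
\end{equation*}
which is analytically easier to handle.

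The plan is to establish strict monotonicity of $q_k$ with slope bounded below by a positive constant, combined with a sign change, so that the intermediate value theorem delivers existence and uniqueness. First I would compute
\begin{equation*}
R_k'(x) = \frac{\bigl(\psi^{(k+1)}(x)\bigr)^2 - \psi^{(k)}(x)\psi^{(k+2)}(x)}{\bigl(\psi^{(k+1)}(x)\bigr)^2},
\end{equation*}
and invoke Lemma \ref{Properties}-(3) to conclude that the numerator is strictly negative for all $x \in \mathbb{R}$. Hence $R_k$ is strictly decreasing on $\mathbb{R}$, and consequently $q_k'(x) = 1 - R_k'(x) > 1$ on $(c,\infty)$.

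Next I would evaluate the boundary behaviour. Since $\psi^{(k)}(c) > 0$ and $\psi^{(k+1)}(c) > 0$ by Lemma \ref{Properties}-(2), one has $q_k(c) = -R_k(c) < 0$. Integrating the lower bound $q_k' > 1$ from $c$ to $x$ yields
\begin{equation*}
q_k(x) > q_k(c) + (x-c) \longrightarrow \infty \quad \text{as} \quad x \to \infty,
\end{equation*}
so $q_k$ is eventually strictly positive. Being continuous and strictly increasing on $(c,\infty)$, $q_k$ admits a unique zero $x_k \in (c,\infty)$, which is the unique root of $h_k$ there. Specializing to $k=0$ and $k=1$ then produces the points $x_0$ and $x_\infty$, respectively.

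I do not anticipate any serious obstacle: the only substantive ingredient is the sign of the determinant $\psi^{(k)}\psi^{(k+2)} - (\psi^{(k+1)})^2$, which is precisely Lemma \ref{Properties}-(3) and encodes the ``Wronskian-type'' positivity built into the Ornstein--Uhlenbeck eigenfunctions. Everything else is an elementary monotonicity-plus-IVT argument.
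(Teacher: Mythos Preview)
Your argument is correct and follows the same overall strategy as the paper: show that a suitable auxiliary function is strictly increasing on $(c,\infty)$, negative near $c$, and eventually positive, then apply the intermediate value theorem.

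The only difference is cosmetic but worth noting. The paper works directly with $h_k(x)=(x-c)\psi^{(k+1)}(x)-\psi^{(k)}(x)$ and observes that $h_k'(x)=(x-c)\psi^{(k+2)}(x)>0$ on $(c,\infty)$, which needs only the positivity of $\psi^{(k+2)}$ from Lemma~\ref{Properties}-(2); the Wronskian inequality of part~(3) is then used separately to obtain the sign change at large $x$ via monotonicity of $\psi^{(k+1)}/\psi^{(k)}$. You instead pass to the ratio $q_k=h_k/\psi^{(k+1)}$ and use part~(3) once to get $q_k'>1$, which delivers both strict monotonicity and the divergence $q_k(x)\to\infty$ in a single stroke. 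Your packaging is arguably tidier, while the paper's computation of $h_k'$ is slightly more elementary; either way the substance is the same.
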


From  \eqref{eq1} and \eqref{eq2} we have
\begin{align}
\label{DA}
A'(y)=\frac{(F^{-1}(y)-c)\psi''(F^{-1}(y))-\psi'(F^{-1}(y))}{\psi''(F^{-1}(y))\psi(F^{-1}(y))-\psi'(F^{-1}(y))^2}, \quad y>0,
\end{align}
and the denominator of $A'(y)$ is nonzero due to Lemma \ref{Properties}-(3). 

Now, we define the functions $M:\mathbb{R}\mapsto\mathbb{R}$ and $N:\mathbb{R}\mapsto\mathbb{R}$ such that for any $x\in\mathbb{R}$
\begin{align}
\label{FBFunc}
M(x):=\frac{(x-c)\psi'(x)-\psi(x)}{\alpha[\psi'(x)^2-\psi''(x)\psi(x)]},\quad
N(x):=\frac{(x-c)\psi''(x)-\psi'(x)}{\psi''(x)\psi(x)-\psi'(x)^2},
\end{align}
and, by differentiating $M$ and rearranging terms, we obtain
$$M'(x)=\frac{\left[\psi'''(x)\left[(x-c)\psi'(x)-\psi(x)\right]-\psi''(x)\left[(x-c)\psi''(x)-\psi'(x)\right]\right]\psi(x)}{\alpha[\psi'(x)^2-\psi''(x)\psi(x)]^2}.$$

However, by noticing that $M(x)=A(F(x))$ (cf.\ \eqref{A(y)} and \eqref{FBFunc}), the chain rule yields $M'(x)=A'(F(x))F'(x),$ which in turn gives
\begin{align}\label{eq6}
F'(x)=\frac{M'(x)}{N(x)},
\end{align}
upon observing that $N(x)=A^\prime(F(x))$ from \eqref{DA} and \eqref{FBFunc}.

Recall that by Lemma \ref{UniqueSol} there exists a unique $x_\infty>c$ solving $N(x_\infty)=0$; that is, solving $(x-c)\psi''(x)-\psi'(x)=0$. Due to \eqref{eq6}, this point is a vertical asymptote of $F'$, and the next result shows that $x_\infty$ is located to the left of $x_0$. The proof can be found in Appendix \ref{App:A}.

\begin{lemma}
\label{xinfty}
Recall Lemma \ref{UniqueSol} and let $x_0$ and  $x_\infty$ be the unique solutions to $M(x)=0$ (i.e.\ $(x-c)\psi'(x)-\psi(x)=0$) and $N(x)=0$ (i.e.\ $(x-c)\psi''(x)-\psi'(x)=0$), respectively. We have $x_{\infty}<x_0$.
\end{lemma}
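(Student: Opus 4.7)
The plan is to locate a sign change of the function $h(x):=(x-c)\psi''(x)-\psi'(x)$ strictly to the left of $x_0$, and then invoke the uniqueness statement of Lemma \ref{UniqueSol} to identify that zero with $x_\infty$. More precisely, I would evaluate $h$ at the two natural test points $x=c$ and $x=x_0$, and exploit the strict positivity statements for $\psi$, $\psi'$, $\psi''$ together with the Wronskian-type inequality of Lemma \ref{Properties}-(3).

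First I would observe that $h(c)=-\psi'(c)<0$, since $\psi$ is strictly increasing on $\mathbb{R}$ (Lemma \ref{Properties}-(1)) and hence $\psi'(c)>0$. Next, I would show $h(x_0)>0$. The defining equation of $x_0$ reads $(x_0-c)\psi'(x_0)=\psi(x_0)$, so $\psi(x_0)$ can be replaced in the Wronskian inequality of Lemma \ref{Properties}-(3) (the case $k=0$): this yields
\begin{equation*}
0<\psi''(x_0)\psi(x_0)-\psi'(x_0)^2=\psi'(x_0)\bigl[(x_0-c)\psi''(x_0)-\psi'(x_0)\bigr]=\psi'(x_0)\,h(x_0).
\end{equation*}
Dividing by $\psi'(x_0)>0$ gives $h(x_0)>0$, as desired.

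Since $h$ is continuous on $(c,\infty)$ and changes sign between $c$ and $x_0$, the intermediate value theorem produces some $\tilde x\in(c,x_0)$ with $h(\tilde x)=0$, i.e.\ $(\tilde x-c)\psi''(\tilde x)-\psi'(\tilde x)=0$. Applying Lemma \ref{UniqueSol} with $k=1$, which asserts that the equation $(x-c)\psi''(x)-\psi'(x)=0$ has a \emph{unique} root on $(c,\infty)$, I identify $\tilde x=x_\infty$, and conclude $x_\infty=\tilde x<x_0$.

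I expect the whole argument to be short; the only point that requires some care is the chain of manipulations in the second step, where the defining relation for $x_0$ has to be inserted into the Wronskian inequality in just the right way to factor out $\psi'(x_0)$. No further analysis of the implicit special-function representation \eqref{eq34} of $\psi$ is needed, which is the main convenience of this route.
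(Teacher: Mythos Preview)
Your argument is correct and, in fact, slightly more direct than the paper's. The paper argues by contradiction: assuming $x_\infty\ge x_0$, it writes
\[
x_0-x_\infty=\frac{\psi(x_0)}{\psi'(x_0)}-\frac{\psi'(x_\infty)}{\psi''(x_\infty)},
\]
uses that $\psi/\psi'$ is strictly decreasing (a consequence of Lemma~\ref{Properties}-(3)) to bound the first term from below by $\psi(x_\infty)/\psi'(x_\infty)$, and then invokes Lemma~\ref{Properties}-(3) once more to conclude $x_0-x_\infty>0$, a contradiction. Your route instead evaluates $h(x)=(x-c)\psi''(x)-\psi'(x)$ at $c$ and at $x_0$, obtains opposite signs via a clean factorisation of the Wronskian inequality at $x_0$, and then appeals to the uniqueness part of Lemma~\ref{UniqueSol} to pin down the resulting zero as $x_\infty$. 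Both proofs ultimately rest on Lemma~\ref{Properties}-(3); the paper exploits it through the monotonicity of $\psi/\psi'$, whereas you exploit it through a single pointwise identity. What your approach buys is that it avoids the contradiction step and the need to differentiate $\psi/\psi'$; what the paper's approach buys is that it does not need to quote the uniqueness clause of Lemma~\ref{UniqueSol} again. One cosmetic remark: you write ``$h$ is continuous on $(c,\infty)$'' but actually use the value $h(c)$; since $\psi$ is smooth on all of $\mathbb{R}$ this is harmless, but you may want to phrase the continuity on $[c,x_0]$ to make the intermediate value step airtight.
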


The following useful corollary immediately follows from the proof of Lemma \ref{UniqueSol}.
\begin{corollary}\label{Order}
One has
$$(x-c)\psi'(x)-\psi(x)< 0,\quad \text{for all }x< x_0,$$
and
$$(x-c)\psi''(x)-\psi'(x)> 0,\quad \text{for all }x> x_\infty.$$
\end{corollary}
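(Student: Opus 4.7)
The plan is to read the corollary directly off the monotonicity argument that must underlie the proof of Lemma \ref{UniqueSol}. Introduce the auxiliary function $g_k(x):=(x-c)\psi^{(k+1)}(x)-\psi^{(k)}(x)$ for $k\in\mathbb{N}\cup\{0\}$. Differentiating and simplifying gives $g_k'(x)=(x-c)\psi^{(k+2)}(x)$. By Lemma \ref{Properties}-(2) each $\psi^{(k+2)}$ is a strictly positive, increasing fundamental solution to $(\mathcal{L}-(\rho+(k+2)b))u=0$, in particular $\psi^{(k+2)}>0$ on $\mathbb{R}$. Hence $g_k'(x)>0$ for every $x>c$, so $g_k$ is strictly increasing on $(c,\infty)$; this is precisely the monotonicity exploited in Lemma \ref{UniqueSol} to exhibit $x_0$ (for $k=0$) and $x_\infty$ (for $k=1$) as the unique zeros of $g_0$ and $g_1$ in $(c,\infty)$.

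From this monotonicity the two inequalities follow immediately on $(c,\infty)$: for $c<x<x_0$ one has $g_0(x)<g_0(x_0)=0$, and for $x>x_\infty$ one has $g_1(x)>g_1(x_\infty)=0$. Since Lemma \ref{xinfty} guarantees $x_\infty>c$, the set $\{x>x_\infty\}$ is contained in $(c,\infty)$ and the second inequality of the corollary is already complete.

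For the first inequality I still have to cover the range $x\leq c$. This is routine: $\psi'>0$ on $\mathbb{R}$ by Lemma \ref{Properties}-(2), so for $x\leq c$ the first summand satisfies $(x-c)\psi'(x)\leq 0$, while $\psi(x)>0$. Therefore $g_0(x)\leq -\psi(x)<0$, which combined with the strictly monotone portion on $(c,x_0)$ yields $g_0(x)<0$ for all $x<x_0$.

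I do not expect any real obstacle: the entire corollary is essentially a restatement of the strict monotonicity of $g_0$ and $g_1$, and the only subtle point is the mild separate argument extending the negativity of $g_0$ past the point $c$, which is needed only because the first inequality is stated on the full half-line $\{x<x_0\}$ rather than on $(c,x_0)$.
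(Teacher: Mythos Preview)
Your proof is correct and follows essentially the same approach as the paper: the corollary is stated there as an immediate consequence of the proof of Lemma \ref{UniqueSol}, which establishes precisely the monotonicity of your functions $g_k$ on $(c,\infty)$ together with the negativity of $g_k$ on $(-\infty,c]$, and you have reproduced exactly that reasoning. The only cosmetic difference is that the paper packages the $x\leq c$ case as item (i) in the proof of Lemma \ref{UniqueSol} rather than appending it at the end as you do.
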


By integrating \eqref{eq6} in the interval $[x,x_0]$, for $x\in(x_\infty,x_0]$, and using the fact that $F(x_0)=0$ (cf.\ \eqref{def:x0}), we obtain 
\begin{align}\label{boundary}
F(x)=\int_{x}^{x_0}\frac{\left[\psi'''(x)\left[(x-c)\psi'(x)-\psi(x)\right]-\psi''(x)\left[(x-c)\psi''(x)-\psi'(x)\right]\right]\psi(x)}{-\alpha[\psi''(z)\psi(z)-\psi'(z)^2][(z-c)\psi''(z)-\psi'(z)]}dz,
\end{align}
which is well defined, but possibly infinite for $x=x_\infty$. In the following we will refer to $F$ as to the \emph{free boundary}. We now prove properties of $F$ that have been only conjectured so far.

\begin{proposition}
\label{propertiesIntegrand}
The free boundary $F$ defined in \eqref{boundary} is strictly decreasing for all $x\in(x_\infty,x_0)$ and belongs to $C^{\infty}((x_\infty,x_0])$. Moreover, 
\begin{equation}
\label{LimitsF}
\lim_{x\downarrow x_\infty}F(x)=\infty = \lim_{x\downarrow x_\infty}F'(x).
\end{equation}
\end{proposition}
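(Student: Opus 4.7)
The plan is to reduce every claim to sign information about $\psi$ and its low-order derivatives (already catalogued in Lemma \ref{Properties}, Corollary \ref{Order}, and Lemma \ref{xinfty}) and, for the blow-up statements, to extract the singular rate at $x_\infty$ from a first-order Taylor expansion of $N$. The tools I intend to use throughout are: $\psi^{(k)}>0$ on $\mathbb{R}$ for every $k\geq 0$; the Wronskian-type inequality $\psi''\psi-(\psi')^2>0$; the signs $(x-c)\psi'(x)-\psi(x)<0$ on $(-\infty,x_0)$ and $(x-c)\psi''(x)-\psi'(x)>0$ on $(x_\infty,\infty)$; and the ordering $x_\infty<x_0$.

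For strict monotonicity I would work with the representation $F'(x)=M'(x)/N(x)$ from \eqref{eq6} rather than differentiating \eqref{boundary} directly. On $(x_\infty,x_0)$ the bracketed quantity in the numerator of $M'(x)$ is (positive)$\cdot$(negative)$-$(positive)$\cdot$(positive), hence strictly negative; multiplied by the positive $\psi$ and divided by the positive $\alpha[\psi'^2-\psi''\psi]^2$ this gives $M'<0$. Meanwhile Corollary \ref{Order} together with Lemma \ref{Properties}-(3) makes both the numerator and the denominator of $N$ strictly positive on $(x_\infty,x_0)$. Hence $F'<0$ throughout $(x_\infty,x_0)$.

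Smoothness is then essentially for free: $\psi$ in \eqref{eq34}--\eqref{eq35} is real analytic on $\mathbb{R}$, so the integrand in \eqref{boundary} is a rational expression in $\psi,\psi',\psi'',\psi'''$, and neither of its denominators vanishes on $(x_\infty,x_0]$ (the first because $x_\infty$ is the only zero of $(x-c)\psi''(x)-\psi'(x)$ in $(c,\infty)$ by Lemma \ref{UniqueSol}, the second by Lemma \ref{Properties}-(3)). Consequently $F'\in C^\infty((x_\infty,x_0])$, and together with the initial value $F(x_0)=0$ from \eqref{def:x0} this gives $F\in C^\infty((x_\infty,x_0])$.

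The main obstacle is the blow-up at $x_\infty$, which I would handle via the differential identity
$$\frac{d}{dz}\bigl[(z-c)\psi''(z)-\psi'(z)\bigr]=(z-c)\psi'''(z).$$
Evaluating at $x_\infty$ where $(x_\infty-c)\psi''(x_\infty)-\psi'(x_\infty)=0$, one obtains $(z-c)\psi''(z)-\psi'(z)=(x_\infty-c)\psi'''(x_\infty)(z-x_\infty)+O((z-x_\infty)^2)$ with leading coefficient strictly positive. Substituting into $N(z)$ yields $N(z)\sim C_0(z-x_\infty)$ with $C_0>0$. Meanwhile $M'(x_\infty)$ is finite and strictly negative: its second summand vanishes at $x_\infty$, leaving $\psi'''(x_\infty)\bigl[(x_\infty-c)\psi'(x_\infty)-\psi(x_\infty)\bigr]\psi(x_\infty)<0$ divided by the positive $\alpha[\psi'(x_\infty)^2-\psi''(x_\infty)\psi(x_\infty)]^2$. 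Therefore $F'(z)=M'(z)/N(z)\sim M'(x_\infty)/\bigl(C_0(z-x_\infty)\bigr)$ as $z\downarrow x_\infty$, so $|F'(z)|\to\infty$; integrating this leading-order singularity over $[x,x_0]$ produces a logarithmic divergence, giving $F(x)\to\infty$. The only delicate piece is verifying that the $O((z-x_\infty)^2)$ remainders do not spoil the asymptotic, and this is immediate since all functions involved are analytic at $x_\infty$.
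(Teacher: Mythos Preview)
Your proposal is correct and follows essentially the same route as the paper: the monotonicity is obtained from the same sign analysis (the paper writes $F'=-\Theta$ where your $\Theta=-M'/N$), smoothness comes from the nonvanishing of the denominators on $(x_\infty,x_0]$, and the blow-up at $x_\infty$ is extracted from the linear vanishing of $(z-c)\psi''(z)-\psi'(z)$ there, which the paper phrases via l'H\^opital's rule on $\Phi(z)/(z-x_\infty)$ and you phrase via the Taylor expansion using $\frac{d}{dz}[(z-c)\psi''(z)-\psi'(z)]=(z-c)\psi'''(z)$. Your treatment is in fact slightly tidier in that you read off the asymptotic for $F'$ directly (and hence also address $\lim_{x\downarrow x_\infty}|F'(x)|=\infty$ explicitly, which the paper leaves implicit), but the underlying idea is identical.
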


\begin{proof}
 \emph{Step 1.} We start by proving the claimed monotonicity. Notice that by \eqref{boundary} one has $F'(z)=-\Theta(z)$, where the function $\Theta:(x_\infty,\infty]\mapsto\mathbb{R}$ is given by 
	$$\Theta(z):=\frac{\left[\psi'''(z)\left[(z-c)\psi'(z)-\psi(z)\right]-\psi''(z)\left[(z-c)\psi''(z)-\psi'(z)\right]\right]\psi(z)}{-\alpha[\psi''(z)\psi(z)-\psi'(z)^2][(z-c)\psi''(z)-\psi'(z)]}.$$ 
	By Lemma \ref{Properties} one has $\psi''(z)\psi(z)-\psi'(z)^2>0$ for any $z\in\mathbb{R}$. Moreover, $\Phi(z):=(z-c)\psi''(z)-\psi'(z)>0$  for all $z>x_\infty>c$ by Corollary \ref{Order}. Therefore the denominator of $\Theta$ is strictly negative for any $z\in(x_\infty,x_0)$. Again, an application of Corollary \ref{Order} implies that the numerator of $\Theta$ is strictly negative for any $z \in (x_\infty,x_0)$, and therefore $\Theta>0$ and $F'<0$. Thus, we conclude that $F$ is strictly decreasing.´
	\vspace{0.25cm}

	\emph{Step 2.} To prove \eqref{LimitsF}, recall that from \emph{Step 1} we have set $\Phi(z)=(z-c)\psi^{\prime\prime}(z)-\psi^\prime(z)>0$ for all $z \in (x_{\infty},x_0)$, and define 
	$$h(z):=\frac{\left[\psi'''(z)\left[(z-c)\psi'(z)-\psi(z)\right]-\psi''(z)\left[(z-c)\psi''(z)-\psi'(z)\right]\right]\psi(z)}{-\alpha[\psi''(z)\psi(z)-\psi'(z)^2]},\quad z\in(x_\infty,x_0),$$ which is continuous and nonnegative by \emph{Step 1}. Notice that $h/\Phi=\Theta$, with $\Theta$ as in \emph{Step 1}. 
	
	By de l'Hopital's rule,
	$$\lim_{z\downarrow x_\infty}\frac{\Phi(z)}{z-x_\infty}=\lim_{z\downarrow x_\infty}\Phi^\prime(z)=(x_\infty-c)\psi^{\prime\prime\prime}(x_\infty)=:\ell>0,$$ 
	so that, for any $\varepsilon>0$, there exists $\delta_\varepsilon>0$ such that if $|z-x_\infty|<\delta_\varepsilon$, then $\big|\frac{\Phi(z)}{z-x_\infty}-\ell\big|<\varepsilon$. 
	Thus, for any $\varepsilon>0$, we let $\delta_\varepsilon$ be as above, and we take $x \in (x_{\infty},x_{\infty}+\delta_{\varepsilon})$. Then, recalling \eqref{boundary}, we see that there exists a constant $C>0$ (possibly depending on $x_\infty$ and $x_0$, but not on $x$) such that
	\begin{align*}
	& F(x)= \int_{x}^{x_0} \Theta(z) dz = \int_{x}^{x_0}\frac{h(z)}{(z-x_\infty)\frac{\Phi(z)}{(z-x_\infty)}}dz \nonumber \\
	& \geq \int_{x}^{x_\infty+\delta_\varepsilon}\frac{C}{(\ell+\varepsilon)}\frac{dz}{(z-x_\infty)}+C\int_{x_\infty+\delta_\varepsilon}^{x_0}\frac{dz}{\Phi(z)} \rightarrow \infty
	\end{align*}
	as $x \downarrow x_\infty$.

Finally, since the integrand in \eqref{boundary} is a $C^{\infty}$-function on $(x_\infty,x_0]$, it follows that $F$ is so as well.
\end{proof}

\begin{remark}
	\label{x0xinfty}
	The critical price levels $x_0$ and $x_{\infty}$ have a clear interpretation. $x_0$ is the free boundary arising in the optimal extraction problem when we set $\alpha=0$, so that the company's actions have no market impact. $x_{\infty}$ is the free boundary of the optimal extraction problem when there is an infinite amount of commodity available in the reservoir, i.e.\ $y=\infty$.
\end{remark}

Given $F$ as above, we now introduce the sets $\mathbb{S}_1$ and $\mathbb{S}_2$ that partition the (candidate) \emph{selling region} $\mathbb{S}$:
\begin{align*}
\mathbb{S}_1&:=\{(x,y)\in\mathbb{R}\times(0,\infty)\,:\, x \geq F^{-1}(y)\text{ and }y\leq (x-x_0)/{\alpha} \},\\
\mathbb{S}_2&:=\{(x,y)\in\mathbb{R}\times(0,\infty)\,:\, x \geq F^{-1}(y)\text{ and }y> (x-x_0)/{\alpha} \}.
\end{align*}
and the (candidate) \emph{waiting region}
$$\mathbb{W}:=\{(x,y)\in\mathbb{R}\times(0,\infty)\,:\, x< F^{-1}(y)\} \cup (\mathbb{R} \times \{0\}).$$

We now make a guess on the structure of the optimal strategy in terms of the sets $\mathbb{W}$ and $\mathbb{S}_1$ and $\mathbb{S}_2$. If the current price $x$ is sufficiently low, and in particular it is such that $x<F^{-1}(y)$ (i.e. $(x,y)\in\mathbb{W}$), we conjecture that the company does not extract, and the payoff accrued is just the continuation value $A(y)\psi(x)$. Whenever the price attempts to cross the critical level $F^{-1}(y)$, then the company makes infinitesimal extractions that keep the state process $(X,Y)$ inside the region $\{(x,y)\in\mathbb{R}\times(0,\infty):\,x\leq F^{-1}(y)\}$. If the current price $x$ is sufficiently high (i.e.\ $x>F^{-1}(y)$) and the current level of the reservoir is sufficiently large (i.e. lies in $\mathbb{S}_2$), then the company makes an instantaneous lump sum extraction of suitable amplitude $z$, and pushes the joint process $(X,Y)$ to the locus of points $\{(x,y)\in\mathbb{R}\times(0,\infty): y=F(x)\}$, and then continues extracting as before. The associated payoff is then the sum of the continuation value starting from the new state $(x-\alpha z,y-z)$, and the profits accrued from selling $z$ units of the commodity, that is $(x-c)z-\frac{1}{2}\alpha z^2$. If the current capacity level is not large enough (i.e.\ $y\leq\frac{x-x_0}{\alpha}$, so that $(x,y)\in\mathbb{S}_1$), then the company immediately depletes the reservoir. This action is associated to the net profit $(x-c)y-\frac{1}{2}\alpha y^2$.

In light of the previous conjecture we therefore define our candidate value function as
\begin{align}
\label{CaFnc}
w(x,y):=
\begin{cases}
A(y)\psi(x),\quad &\text{if } (x,y) \in \mathbb{W},\\
A\big(F(x-\alpha z)\big)\psi(x-\alpha z)+(x-c)z-\frac{1}{2}\alpha z^2,&\text{if } (x,y) \in \mathbb{S}_2,\\
(x-c)y-\frac{1}{2}\alpha y^2, &\text{if } (x,y) \in \mathbb{S}_1,\\
\end{cases}
\end{align}
where, for any $(x,y)\in\mathbb{S}_2$, we denote by $z:=z(x,y)$ the unique solution to 
\begin{align}
\label{optimalExercise}
y-z=F(x-\alpha z).
\end{align}
In fact, its existence and uniqueness is guaranteed by the next lemma, whose proof is in Appendix \ref{App:A}.
\begin{lemma}
\label{Existencez}
For any $(x,y)\in\mathbb{S}_2$, there exists a unique solution $z(x,y)$ to \eqref{optimalExercise}. Moreover, we have $z(x,y) \in (\frac{x-x_0}{\alpha},\frac{x-x_\infty}{\alpha} \wedge y]$,
\begin{align}\label{eq54}
z(x,F(x))=0 \quad \text{for any } x\in(x_\infty,x_0),
\end{align} and
\begin{align}\label{eq55}
z(x,y)=\frac{x-x_0}{\alpha},\quad\text{for any $(x,y)\in\mathbb{R}\times(0,\infty)$ such that $x\geq x_0$ and $y=\frac{x-x_0}{\alpha}$.}
\end{align}
\end{lemma}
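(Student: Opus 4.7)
The plan is to reformulate \eqref{optimalExercise} as the vanishing of a single-variable function and to invoke strict monotonicity together with the intermediate value theorem. Fix $(x,y)\in\mathbb{S}_2$ and set $h(z):=F(x-\alpha z)-(y-z)$. For the right-hand side of \eqref{optimalExercise} to be defined and for the equality $y-z=F(x-\alpha z)\ge 0$ to be feasible, $z$ must lie in the interval $I:=\bigl[\tfrac{x-x_0}{\alpha},\tfrac{x-x_\infty}{\alpha}\wedge y\bigr]$, with the right endpoint excluded if $y>\tfrac{x-x_\infty}{\alpha}$, since Proposition \ref{propertiesIntegrand} shows that $F$ is defined and nonnegative only on $(x_\infty,x_0]$.

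Next I will check the sign of $h$ at the endpoints of $I$. At the left endpoint, $h\bigl(\tfrac{x-x_0}{\alpha}\bigr)=F(x_0)-\bigl(y-\tfrac{x-x_0}{\alpha}\bigr)=-\bigl(y-\tfrac{x-x_0}{\alpha}\bigr)<0$, because $(x,y)\in\mathbb{S}_2$ forces $y>\tfrac{x-x_0}{\alpha}$. At the right endpoint two subcases arise. If $\tfrac{x-x_\infty}{\alpha}\le y$, then $F(x-\alpha z)\to+\infty$ as $z\uparrow\tfrac{x-x_\infty}{\alpha}$ by the vertical asymptote in Proposition \ref{propertiesIntegrand}, and hence $h(z)\to+\infty$. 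If instead $\tfrac{x-x_\infty}{\alpha}>y$, then at $z=y$ we have $h(y)=F(x-\alpha y)$; the inequalities $y<\tfrac{x-x_\infty}{\alpha}$ and $y>\tfrac{x-x_0}{\alpha}$ place $x-\alpha y$ strictly inside $(x_\infty,x_0)$, where $F$ is strictly positive (being strictly decreasing on $(x_\infty,x_0]$ and vanishing at $x_0$, by Proposition \ref{propertiesIntegrand}), so $h(y)>0$. In either subcase, $h$ changes sign on $I$.

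For uniqueness I will differentiate: $h^\prime(z)=1-\alpha F^\prime(x-\alpha z)>1$, since $F^\prime<0$ on $(x_\infty,x_0)$ by Proposition \ref{propertiesIntegrand}. Hence $h$ is strictly increasing on $I$, so the sign change produces exactly one zero $z(x,y)$; the endpoint analysis even yields the strict inclusion $z(x,y)\in\bigl(\tfrac{x-x_0}{\alpha},\tfrac{x-x_\infty}{\alpha}\wedge y\bigr]$ claimed in the lemma. The identities \eqref{eq54} and \eqref{eq55} then follow by direct substitution together with uniqueness: for $y=F(x)$ with $x\in(x_\infty,x_0)$, the choice $z=0$ trivially solves \eqref{optimalExercise}; for $y=\tfrac{x-x_0}{\alpha}$ with $x\ge x_0$, the choice $z=\tfrac{x-x_0}{\alpha}$ gives $y-z=0=F(x_0)=F(x-\alpha z)$.

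The substantive tools, namely the strict monotonicity and the blow-up of $F$ at $x_\infty$, were already furnished by Proposition \ref{propertiesIntegrand}, so the argument is rather clean. The one mild obstacle I expect is the case split at the upper endpoint of $I$: only when $y\le\tfrac{x-x_\infty}{\alpha}$ does one get sign change for free via the asymptote, while for $y>\tfrac{x-x_\infty}{\alpha}$ one must verify positivity of $F(x-\alpha y)$ by combining the defining inequality $y>\tfrac{x-x_0}{\alpha}$ of $\mathbb{S}_2$ with the case assumption.
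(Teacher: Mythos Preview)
Your proof is correct and follows essentially the same approach as the paper: define a single-variable function (your $h$ is the negative of the paper's $R(z)=y-z-F(x-\alpha z)$), establish a sign change on the relevant interval, and conclude via strict monotonicity inherited from $F'<0$. Your choice to evaluate at the left endpoint $\tfrac{x-x_0}{\alpha}$ rather than at $z=0$ (as the paper does) is in fact slightly cleaner, since it works uniformly even when $x>x_0$ and $F(x)$ is undefined.
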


Next, we verify that $w$ is a classical solution to the HJB equation  \eqref{HJB1}. This is accomplished in the next two results.

\begin{lemma}
\label{Smooth}
The function $w$ is $C^{2,1}(\mathbb{R}\times [0,\infty))$.
\end{lemma}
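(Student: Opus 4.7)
The plan is to verify $C^{2,1}$-regularity of $w$ in two steps: first, smoothness in the interior of each of the three regions $\mathbb{W}$, $\mathbb{S}_1$, $\mathbb{S}_2$; second, matching of $w$ and of its derivatives $w_x$, $w_y$, $w_{xx}$, $w_{xy}$ across the two interface curves that separate them.

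\emph{Interior smoothness.} Inside $\mathbb{W}$, $w(x,y)=A(y)\psi(x)$; since $\psi\in C^\infty(\mathbb{R})$ by Lemma \ref{Properties} and the function $M$ in \eqref{FBFunc} is $C^\infty$ (its denominator does not vanish, by Lemma \ref{Properties}-(3)), the identity $A(y)=M(F^{-1}(y))$ together with the inverse function theorem applied to $F\in C^\infty((x_\infty,x_0])$ with $F'<0$ (Proposition \ref{propertiesIntegrand}) gives $A\in C^\infty([0,\infty))$. Inside $\mathbb{S}_1$, $w$ is polynomial in $(x,y)$. Inside $\mathbb{S}_2$, differentiating \eqref{optimalExercise} in $z$ yields the factor $1-\alpha F'(x-\alpha z)>1$, so the implicit function theorem provides $z(\cdot,\cdot)\in C^\infty$, and hence $w\in C^\infty$ there by \eqref{CaFnc}.

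\emph{Matching along $y=F(x)$, $x\in(x_\infty,x_0)$.} By \eqref{eq54}, $z(x,F(x))=0$, so the $\mathbb{S}_2$-formula in \eqref{CaFnc} reduces to $A(F(x))\psi(x)=A(y)\psi(x)$, matching the $\mathbb{W}$-expression. Implicit differentiation of \eqref{optimalExercise} gives
\begin{align*}
z_x=\frac{-F'(x-\alpha z)}{1-\alpha F'(x-\alpha z)},\qquad z_y=\frac{1}{1-\alpha F'(x-\alpha z)}.
\end{align*}
Writing the $\mathbb{S}_2$-formula as $A(y-z)\psi(x-\alpha z)+(x-c)z-\tfrac{1}{2}\alpha z^2$ and differentiating, the $\mathbb{S}_2$-side value of $w_x$ at $z=0$ equals $A(y)\psi'(x)$ plus $z_x$ multiplied by $(x-c)-\alpha A(y)\psi'(x)-A'(y)\psi(x)$, a quantity that vanishes by \eqref{eq1}. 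The analogous manipulation (still relying only on \eqref{eq1}) produces matching of $w_y$, and then \eqref{eq2} together with its $x$-derivative (cf.\ \eqref{cond3}) give matching of $w_{xx}$ and $w_{xy}$.

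\emph{Matching along $y=(x-x_0)/\alpha$, $x\geq x_0$.} By \eqref{eq55}, $z=y$ and $x-\alpha z=x_0$ along this ray. Since $A(0)=0$, the $\mathbb{S}_2$-expression collapses to $(x-c)y-\tfrac{1}{2}\alpha y^2$, matching $\mathbb{S}_1$. For the derivatives, substituting $z=y$ and $x-\alpha z=x_0$ in the $\mathbb{S}_2$ derivative formulas produces correction terms each proportional to $(x_0-c)-A'(0)\psi(x_0)$, which is zero by \eqref{eq1} evaluated at $y=0$ (recall $A(0)=0$). One checks that the $\mathbb{S}_2$ derivatives then reduce to $w_x=y$, $w_y=x-c-\alpha y$, $w_{xx}=0$, $w_{xy}=1$, matching the polynomial derivatives obtained from the $\mathbb{S}_1$-expression.

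\emph{Main obstacle.} The bookkeeping for $w_{xx}$ and $w_{xy}$ in $\mathbb{S}_2$ near both interfaces is the most delicate point: one must differentiate $z$ twice implicitly and patiently isolate the smooth-fit identities \eqref{eq1}--\eqref{eq2} (and, at the second interface, their specialization at $y=0$) in every coefficient in order to see the desired cancellations.
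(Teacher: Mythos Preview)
Your proposal is correct and follows essentially the same approach as the paper: establish interior smoothness in each region via the implicit function theorem for $z$, then verify that $w$ and its relevant derivatives match across the two interfaces by exploiting the smooth-fit identities \eqref{eq1}--\eqref{eq2} (and their specialization at $y=0$). The only organizational difference is that the paper first simplifies the $\mathbb{S}_2$-derivatives to the clean closed forms $w_x=A(F(x-\alpha z))\psi'(x-\alpha z)+z$, $w_{xx}=A(F(x-\alpha z))\psi''(x-\alpha z)$, $w_y=A'(F(x-\alpha z))\psi(x-\alpha z)$ valid throughout $\mathbb{S}_2$, which makes the interface matching immediate by continuity of $z$; your route of evaluating the raw derivative expressions at $z=0$ and $z=y$ and isolating the vanishing brackets is equivalent but slightly heavier in bookkeeping (and your additional check of $w_{xy}$ is not required for the $C^{2,1}$ claim).
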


\begin{proof}
Continuity is clear by construction. We therefore need to eveluate the derivatives of $w$.

Denoting by $\I(\cdot)$ the interior of a set, we have by \eqref{CaFnc} that for all $(x,y)\in \I(\mathbb{W})$ 
	\begin{align}\label{DerWait}
		w_x(x,y)=A(y)\psi^\prime(x),\quad
		w_{xx}(x,y)=A(y)\psi^{\prime\prime}(x),\quad
		w_y(x,y)=A^\prime(y)\psi(x),
	\end{align}
	 and that for all $(x,y)\in \I(\mathbb{S}_1)$
	\begin{align}\label{DerS1}
		w_x(x,y)=y,\quad
		w_{xx}(x,y)=0,\quad
		w_y(x,y)=x-c-\alpha y.
	\end{align}
The previous equations easily give the continuity of the derivatives in $\I(\mathbb{W})$, $\I(\mathbb{S}_1)$ and in $\mathbb{R} \times \{0\}$.

To evaluate $w_x$, $w_{xx}$ and $w_y$ for $(x,y)\in \I(\mathbb{S}_2)$, we need some more work.  From \eqref{optimalExercise}, we calculate the derivatives of $z=z(x,y)$ with respect to $x$ and $y$ by the help of the implicit function theorem, and we obtain
	\begin{align}\label{eq20}
	z_x(x,y)=\frac{F'(x-\alpha z)}{\alpha F'(x-\alpha z)-1},
	\end{align}
	and
	\begin{align}\label{eq21}
	z_y(x,y)=\frac{1}{1-\alpha F'(x-\alpha z)},
	\end{align}
	for any $(x,y)\in \I(\mathbb{S}_2)$.
	Moreover, recalling that we have set $G\equiv F^{-1}$, and taking $y=F(x-\alpha z)$, we find from \eqref{eq1}
	\begin{align}\label{eq22}
	A'(F(x-\alpha z))=\frac{x-\alpha z-c}{\psi(x-\alpha z)}-\alpha A(F(x-\alpha z))\frac{\psi'(x-\alpha z)}{\psi(x-\alpha z)},
	\end{align}
	and from \eqref{eq2}
	\begin{align}
	\label{eq23}
	A'(F(x-\alpha z))=\frac{1-\alpha A(F(x-\alpha z))\psi''(x-\alpha z)}{\psi'(x-\alpha z)}.
	\end{align}
	
  By differentiating $w$ with respect to $x$ strictly inside $\mathbb{S}_2$ (cf.\ the second line of \eqref{CaFnc}), and using \eqref{eq20} and \eqref{eq22}, we obtain
	\begin{align}
	\label{eq24}
	w_x(x,y)=A(F(x-\alpha z))\psi^\prime(x-\alpha z)+z.
	\end{align}
	Also, by \eqref{eq23} and \eqref{eq20}
	\begin{align}
	\label{eq25}
	w_{xx}(x,y)=A(F(x-\alpha z))\psi^{\prime\prime}(x-\alpha z).
	\end{align}
	Moreover, differentiating with respect to $y$ the second line of \eqref{CaFnc}, and using \eqref{eq21} and \eqref{eq22}, yields
	\begin{align}
	\label{eq26}
		w_y(x,y)=A^\prime(F(x-\alpha z))\psi(x-\alpha z).
	\end{align}
Equations \eqref{eq24}-\eqref{eq26} hold for any $(x,y)\in \I(\mathbb{S}_2)$, and give that $w \in C^{1,2}(\I(\mathbb{S}_2))$.

Now, let $(x_n,y_n)_n\subseteq \I(\mathbb{S}_2)$ be any sequence converging to $(x,F(x))$, $x\in(x_\infty,x_0]$. Since $\lim_{n\rightarrow\infty}z(x_n,y_n)=0$ by continuity of $z$, and because $A$, $\psi$, $\psi'$ and $\psi''$ are also continuous, we conclude from \eqref{DerWait} and \eqref{eq24}--\eqref{eq26} that $w\in C^{2,1}(\overline{\mathbb{W}}\cap\overline{\mathbb{S}_2})$, where $\overline{\mathbb{W}}$ and $\overline{\mathbb{S}_2}$ denote the closures of $\mathbb{W}$ and $\mathbb{S}_2$.
 
In order to prove that $w\in C^{2,1}(\mathbb{S}_1\cap\overline{\mathbb{S}_2})$, consider a sequence $(x_n,y_n)_n \subseteq \mathbb{S}_2$ converging to $(x,\frac{x-x_0}{\alpha})$, $x\geq x_0$. Again by the continuity of $F$ and exploiting that $F(x_0)=0$ we get $\lim\limits_{n\rightarrow\infty}z(x_n,y_n)=\frac{1}{\alpha}(x-x_0)$. Therefore, we have $w\in C^{2,1}(\mathbb{S}_1\cap\overline{\mathbb{S}_2})$ by \eqref{DerS1} and \eqref{eq24}--\eqref{eq26}, and upon employing $A(F(0))=0$ and $\psi(x_0)A'(F(0))=\frac{\psi(x_0)}{\psi'(x_0)}=x_0-c$ by \eqref{eq23}.

Collecting all the previous results, the claim follows.
\end{proof}

\begin{proposition}
\label{HJB}
The function $w$ as in \eqref{CaFnc} is a $C^{2,1}(\mathbb{R}\times [0,\infty))$ solution to the HJB equation \eqref{HJB1}, and it is such that $w(x,0)=0$. 
\end{proposition}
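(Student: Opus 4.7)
The regularity $w \in C^{2,1}(\mathbb{R}\times[0,\infty))$ is precisely the content of Lemma \ref{Smooth}, and the boundary condition $w(x,0)=0$ follows by direct inspection of the three branches of \eqref{CaFnc} together with $A(0)=0$. The HJB equation \eqref{HJB1} contains two built-in equalities. Firstly, $\mathcal{L}w - \rho w = 0$ on $\mathbb{W}$ because $w = A(y)\psi(x)$ there and $\psi$ is a fundamental solution of $(\mathcal{L}-\rho)u = 0$ (Lemma \ref{Properties}). Secondly, $-\alpha w_x - w_y + x - c = 0$ on $\mathbb{S}_1 \cup \mathbb{S}_2$: on $\mathbb{S}_1$ this is direct substitution of \eqref{DerS1}, and on $\mathbb{S}_2$ it follows by plugging \eqref{eq24} and \eqref{eq26} into the constraint and using \eqref{eq22} to reduce the expression algebraically to $-\alpha z - (x-\alpha z) + x - c + c = 0$.

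What remains are the two inequalities $\mathcal{L}w - \rho w \leq 0$ on $\mathbb{S}_1 \cup \mathbb{S}_2$ and $-\alpha w_x - w_y + x - c \leq 0$ on $\mathbb{W}$. In $\mathbb{S}_1$, the explicit quadratic form of $w$ yields
\[
\mathcal{L}w(x,y) - \rho w(x,y) \;=\; y\bigl[a + \rho c - (b+\rho)x + \tfrac{\rho\alpha}{2}y\bigr],
\]
whose sign can be controlled using the $\mathbb{S}_1$-constraint $y\leq(x-x_0)/\alpha$ together with the characterization of $x_0$ in Lemma \ref{UniqueSol}. In $\mathbb{S}_2$, setting $x' := x - \alpha z$ and $y' := y - z = F(x')$, using \eqref{eq24}--\eqref{eq26} and exploiting that $\psi$ solves $(\mathcal{L}-\rho)\psi = 0$ at $x'$, one obtains
\[
\mathcal{L}w(x,y) - \rho w(x,y) \;=\; z\bigl[a + \rho c - (b+\rho)x + \tfrac{\rho\alpha}{2}z - b\alpha A(y')\psi'(x')\bigr],
\]
where the last term encodes the interaction between the mean-reverting drift and the smooth-fit value at the free boundary. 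For the waiting-region inequality, I would set $g(x,y) := -\alpha A(y)\psi'(x) - A'(y)\psi(x) + x - c$ and observe that both $g$ and $\partial_x g$ vanish at $x = F^{-1}(y)$ thanks to the smooth-fit relations \eqref{eq1}--\eqref{eq2}; a Taylor expansion at the boundary, with $\partial_{xx}g$ controlled by the strict convexity of $\psi^{(k)}$ from Lemma \ref{Properties}-(2)--(3), would give $g \leq 0$ locally, with propagation to $x \to -\infty$ following from the exponential decay of $\psi^{(k)}$ at $-\infty$.

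The main obstacle, as anticipated in the Introduction and in Remark \ref{rem:verificoBech}, is establishing the non-positivity of the brackets in the two displays above, and of $g$ in the waiting region, uniformly across the parameter range: because $\psi$ is only available through the implicit cylinder-function representation \eqref{eq34}--\eqref{eq35}, no single direct manipulation handles every case. When the marginal cost $c$ is ``large enough'', the convexity and monotonicity estimates from Lemma \ref{Properties}, combined with the monotonicity of $F$ in Proposition \ref{propertiesIntegrand}, will close the argument analytically. When $c$ is ``small'', I would bypass the direct HJB analysis entirely and appeal to the calculus-of-variations reformulation of \cite{Becherer}, from which optimality of the candidate strategy and, a posteriori, the missing HJB inequality can be extracted. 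Splicing the two methods along the parameter split completes the verification of the HJB equation.
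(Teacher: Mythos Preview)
Your outline matches the paper's architecture: the two built-in equalities are exactly as you say, and the decomposition into the three remaining inequalities (on $\mathbb{W}$, $\mathbb{S}_1$, $\mathbb{S}_2$) together with the parameter split $a-bc\lessgtr 0$ for the $\mathbb{S}_2$ case is precisely what the paper does. Two points are worth flagging.

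\emph{Waiting region.} Your approach here is genuinely different from, and cleaner than, the paper's. The paper rewrites $g(x,y)$ as a two-variable function $Q(x,q)$ with $q=F^{-1}(y)$, computes $Q_x$ and $Q_{xq}$, and performs a three-case analysis on the sign of $\Phi(x,q)=\psi'(x)\psi'(q)-\psi''(x)\psi(q)$ to conclude $Q_x\geq 0$ for $x\leq q$. Your route is shorter: since $A(y)>0$ and $A'(y)>0$ for $y>0$ (the latter from \eqref{DA}, Corollary \ref{Order} and Lemma \ref{Properties}-(3)), and since $\psi'',\psi'''>0$ by Lemma \ref{Properties}-(1)--(2), one has
\[
g_{xx}(x,y)=-\alpha A(y)\psi'''(x)-A'(y)\psi''(x)<0\quad\text{for all }x\in\mathbb{R},\ y>0,
\]
so $g(\cdot,y)$ is \emph{globally} strictly concave with a critical point of value zero at $x=F^{-1}(y)$, hence $g\leq 0$ everywhere. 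You undersell this by speaking of a ``local'' conclusion plus ``propagation to $-\infty$'': no propagation is needed, and the exponential decay of $\psi^{(k)}$ plays no role. State the global concavity directly and the argument is a single line.

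\emph{Scope of the parameter split.} You group all three inequalities as requiring the case split on $c$. In the paper only the $\mathbb{S}_2$ inequality does: the $\mathbb{S}_1$ bracket is handled uniformly via the elementary inequality $\bar{x}:=(a+\rho c)/(\rho+b)<x_0$ (Lemma \ref{Propx_0xbar}), and your own concavity argument for $\mathbb{W}$ works for every parameter choice. Reserving the Becherer machinery for $\mathbb{S}_2$ when $a-bc>0$, and treating the other case directly via the estimate $\widehat{Q}_x\leq 0$ (drop the nonpositive term $-b\alpha z A\psi'$ and use $\alpha z_x<1$, $x\geq x_\infty>\bar{x}$), keeps the argument tighter.
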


\begin{proof}
The claimed regularity follows from Lemma \ref{Smooth}, whereas we see from \eqref{CaFnc} that $w(x,0)=0$ since $A(0)=0$. Hence, we assume in the following that $y>0$. Moreover, it is important to recall that in \eqref{eq1} and \eqref{eq2} we have set $G\equiv F^{-1}$. 

By construction $\mathcal{L}w(x,y)-\rho w(x,y)=0$ for all $(x,y)\in\mathbb{W}$. Moreover, $-\alpha w_x(x,y)-w_y(x,y)+(x-c)=0$ for all $(x,y)\in\mathbb{S}_1$. Also, $-\alpha w_x(x,y)-w_y(x,y)+(x-c)=0$ for all $(x,y)\in\mathbb{S}_2$ by employing \eqref{eq24} and \eqref{eq26}, and observing that from \eqref{eq1} one has $$-\alpha A(F(x-\alpha z))\psi'(x-\alpha z)-A'(F(x-\alpha z))\psi(x-\alpha z)+(x-\alpha z)-c=0.$$ 
	Hence, it is left to show that
	\begin{align}\label{cond5}	
	 -\alpha w_{x}(x,y)-w_y(x,y)+x-c&\leq 0,\quad\forall (x,y)\in\mathbb{W},\\
	 \label{cond6}
	\mathcal{L}w(x,y)-\rho w(x,y)&\leq 0,\quad\forall (x,y)\in\mathbb{S}=\mathbb{S}_1\cup \mathbb{S}_2
	\end{align} 
	In \emph{Step 1} below we prove that \eqref{cond5} holds, whereas the proof of \eqref{cond6} is separately performed for $\mathbb{S}_1$ and $\mathbb{S}_2$ in \emph{Step 2} and \emph{Step 3} respectively.
	\vspace{0.25cm}
	
	\emph{Step 1.} Here we prove that \eqref{cond5} holds for any $(x,y)\in\mathbb{W}$. Notice that \eqref{eq1} gives
	\begin{align}\label{eq27}
	A'(y)=\frac{F^{-1}(y)-c}{\psi(F^{-1}(y))}-\frac{\alpha A(y)\psi'(F^{-1}(y))}{\psi(F^{-1}(y))}.
	\end{align}
	Then, by using the first and the third equation of \eqref{DerWait}, and \eqref{eq27}, we rewrite the left-hand side of \eqref{cond5} (after rearranging terms) as 
	\begin{align}\label{eq56}
	\alpha A(y)\bigg[\frac{\psi^\prime(F^{-1}(y))\psi(x)}{\psi(F^{-1}(y))}-\psi^\prime(x)\bigg]-\frac{F^{-1}(y)-c}{\psi(F^{-1}(y))}\psi(x)+x-c=Q(x,F^{-1}(y)),
	\end{align}
	for any $(x,y)\in\mathbb{W}$. 
	Here, we have defined 
  $$Q(x,q):=\alpha A(F(q))\bigg[\frac{\psi^\prime(q)\psi(x)}{\psi(q)}-\psi^\prime(x)\bigg]-\frac{q-c}{\psi(q)}\psi(x)+x-c,$$
	for any $(x,q)\in\mathbb{R} \times [x_{\infty},x_0]$. Since $Q(q,q)=0$, in order to have \eqref{cond5} it suffices to show that one has (recall that $(x_\infty,x_0]$ is the domain of $F$) 
$$Q_x(x,q)\geq 0,\quad\text{for any $x\leq q,$ for all $q\in(x_\infty,x_0]$}.$$ 
We prove this in the following.
	
	Differentiating $Q$ with respect to $x$, and using \eqref{A(y)}, gives
	\begin{align}
	\label{eq31}
Q_x(x,q)=\frac{\psi(q)-(q-c)\psi^\prime(q)}{\psi^{\prime\prime}(q)\psi(q)-\psi^\prime(q)^2}\bigg[\frac{\psi^\prime(x)\psi^\prime(q)}{\psi(q)}-\psi^{\prime\prime}(x)\bigg]-(q-c)\frac{\psi^\prime(x)}{\psi(q)}+1.
	\end{align} 
	
	Take $x\leq x_\infty$ and $q=x_\infty$, and recall that $x_\infty>c$ solves $(x_\infty-c)=\frac{\psi'(x_\infty)}{\psi''(x_\infty)}$. Then, after some simple algebra,  we have $$Q_x(x,x_\infty)=1-\frac{\psi^{\prime\prime}(x)}{\psi^{\prime\prime}(x_\infty)}>0,$$ where the last inequality is due to the fact that $x\mapsto\psi''(x)$ is strictly increasing.
	
	Moreover, we find
	\begin{align}
	\label{eq5}
	Q_x(x,x_0)=1-(x_0-c)\frac{\psi'(x)}{\psi(x_0)}\geq 0,\quad\text{for any $x\leq x_0,$}
	\end{align}
	due to the fact that $x_0>c$ uniquely solves $(x_0-c)\psi'(x_0)-\psi(x_0)=0$ and $x\mapsto 1-(x_0-c)\frac{\psi'(x)}{\psi(x_0)}<0$ is strictly decreasing.
	
	By differentiating $Q_x$ of \eqref{eq31} with respect to $q$ one obtains 
	\begin{align}
	\label{Qxq}
	Q_{xq}(x,q)=\left[\frac{\psi'''(q)\left[(q-c)\psi'(q)-\psi(q)\right]-\psi''(q)\left[(q-c)\psi''(q)-\psi'(q)\right]}{\big(\psi^{\prime\prime}(q)\psi(q)-\psi^\prime(q)^2\big)^2}\right]\Phi(x,q),
	\end{align}
	where we have introduced the function 
	\begin{align}
	\label{Phi}
	\Phi(x,q):=\psi^\prime(x)\psi^\prime(q)-\psi^{\prime\prime}(x)\psi(q),\quad\text{for all $(x,q)\in\mathbb{R}^2$},
	\end{align} 
	that is such that
	\begin{align}\label{eq4}
	\Phi_q(x,q)=\psi^\prime(x)\psi^{\prime\prime}(q)-\psi^{\prime\prime}(x)\psi^\prime(q)>0,\quad\forall x\leq q,
	\end{align}
	since ${\psi^\prime}/{\psi^{\prime\prime}}$ is decreasing due to Lemma \ref{Properties} with $k=1$. 
	
	 By Corollary \ref{Order} we have that 
	 \begin{align}\label{cond7}
	 \psi'''(q)\left[(q-c)\psi'(q)-\psi(q)\right]-\psi''(q)\left[(q-c)\psi''(q)-\psi'(q)\right]\leq 0,
	 \end{align} 
	 for all $q\in[x_\infty,x_0]$. Hence, the term multiplying $\Phi$ in the right-hand side of \eqref{Qxq} is negative. 
	 
	 In light of \eqref{eq4}, we know that $\Phi(x,q)$ is increasing in $q$ for $q\geq x$. We now have three possible cases. 
	 
	 (a) If $\Phi$ is such that $\Phi(x,q)<0$ for all $q\in[x_\infty,x_0]$, then by \eqref{cond7} (and noticing that the function in \eqref{cond7} in fact appears in the numerator of $Q_{xq}$) we must have $Q_{xq}(x,q)\geq 0$ for all $q\in[x_\infty,x_0]$, so that
	\begin{align}
	\label{eq28}
	 0\leq Q_x(x,x_\infty)\leq Q_x(x,q)\leq Q_x(x,x_0),\quad\text{for all $q\in[x_\infty,x_0]$, and $x\leq x_\infty$}.
	 \end{align} 
	
	 (b) If $\Phi$ is such that $\Phi(x,q)>0$ for all $q\in[x_\infty,x_0]$, then by \eqref{cond7} we must have $Q_{xq}(x,q)\leq 0$ for all $q\in[x_\infty,x_0]$, so that 
	 \begin{align}
	 \label{eq29}
	 0\leq Q_x(x,x_0)\leq Q_x(x,q)\leq Q_x(x,x_\infty),\quad\text{for all $q\in[x_\infty,x_0]$, and $x\leq x_\infty$.}
	 \end{align}

	 (c) If $\Phi$ is such that $\Phi(x,q)\leq 0$ for all $q\in[x_\infty,\bar{q}]$, where $\bar{q}\in[x_\infty,x_0]$, and $\Phi(x,q)>0$ for all $q\in[\bar{q},x_0]$, then by \eqref{cond7} we must have $Q_{xq}(x,q)\geq 0$ for all $q\in[x_\infty,\bar{q}]$, and $Q_{xq}(x,q)\leq 0$ for all $q\in[\bar{q},x_0]$, so that 
	 \begin{align}
	 \label{eq30}
	 Q_x(x,q)\geq\min\{Q_x(x,x_\infty),Q_x(x,x_0)\}\geq 0,\quad\text{for all $q\in[x_\infty,x_0]$ and $x \leq x_\infty$.}
	 \end{align}
From \eqref{eq28}-\eqref{eq30}, we then conclude that \eqref{cond5} holds for any $(x,y)\in\mathbb{W}$ such that $x\leq x_\infty$.
	 	 
	 Now, take $x\in(x_\infty,x_0]$ and let $q\in[x,x_0]$. For $q=x$ we find from \eqref{eq31} that
	 \begin{align}\label{eq32}
	 Q_x(x,x)=0.
	 \end{align}
	 Then, proceeding as above, from \eqref{eq5} and \eqref{eq32}, we obtain that $Q_x(x,q)\geq 0$ for all $x\in(x_\infty,x_0]$ with $q\in[x,x_0]$. 
	 
	 Hence, in conclusion, $Q_x(x,F^{-1}(y))\geq 0$ for all $x\leq F^{-1}(y)$ and $y>0$, and \eqref{cond5} is then established.
	 \vspace{0.25cm}
	 
	 \emph{Step 2.} Here, we show that \eqref{cond6} holds in $\mathbb{S}_1$. Setting 
	$$\bar{x}=\frac{a+\rho c}{\rho + b},$$ 
	by Lemma \ref{Propx_0xbar} in Appendix \ref{App:B} we have $\bar{x}\leq x_0$, with $x_0$ solving $(x_0-c)\psi'(x_0)-\psi(x_0)=0$ (cf.\ Lemma \ref{UniqueSol}).
	 
	 Now, let $(x,y)\in\mathbb{S}_1$ be given and fixed. Thanks to the first and second equation in  \eqref{DerS1} we have
	 \begin{align*}
	 \mathcal{L}w(x,y)-\rho w(x,y)=(a-bx)y-\rho\Big[(x-c)y-\frac{1}{2}\alpha y^2\Big]=:\widetilde{Q}(x,y).
	 \end{align*}
	 Clearly $\widetilde{Q}(x,0)=0$. Also, since $(x,y)\in\mathbb{S}_1$ is such that $y\leq \frac{1}{\alpha}(x-x_0)$ and $x\geq x_0$, we have
	 \begin{align*}
	 \widetilde{Q}_y(x,y)=a-bx-\rho(x-c)+\alpha\rho y\leq a-bx-\rho(x_0-c)\leq a+\rho c-x_0(\rho + b)\leq 0,
	 \end{align*}
	 where the last inequality is due to $x_0\geq\bar{x}$. Hence $\mathcal{L}w(x,y)-\rho w(x,y)\leq 0$ on $\mathbb{S}_1$.
	 
	  \vspace{0.25cm}
	 
	 \emph{Step 3.} Here we provide the proof of \eqref{cond6} in $\mathbb{S}_2$, separately for the two cases: \emph{(i)} $a-bc \leq 0$ and \emph{(ii)} $a-bc>0$, and different approaches are followed in these two cases (see also Remark \ref{rem:verificoBech} below).
	 
	 	  \vspace{0.15cm} 
	 	  
	 	  \emph{(i)} Assume $a-bc \leq 0$. Let $(x,y)\in\mathbb{S}_2$ be given and fixed, and recall that $x\geq F^{-1}(y)$ and $y> \frac{1}{\alpha}(x-x_0)$ for all $(x,y)\in\mathbb{S}_2$. By employing \eqref{eq24} and \eqref{eq25}, and observing that from \eqref{cond1} one has 
	 \begin{align}
	 \label{eq33}
	 \begin{split}
	 &\Big[\frac{\sigma^2}{2} A(F(x-\alpha z))\psi''(x-\alpha z)+\big(a-b(x-\alpha z)\big)A(F(x-\alpha z))\psi'(x-\alpha z) \\
	 & \hspace{0.25cm} -\rho A(F(x-\alpha z))\psi(x-\alpha z)\Big]\Big|_{z=z(x,y)} =0,
	 \end{split}
	 \end{align}
	 we get
	 \begin{align}
	 \label{widebarQ}
	 \mathcal{L}w(x,y)-\rho w(x,y)=\Big[(a-bx)z-\rho(x-c)z+\frac{1}{2}\rho\alpha z^2 - b\alpha z A(F(x-\alpha z))\psi'(x-\alpha z)\Big]\Big|_{z=z(x,y)}.
	 \end{align}
	 Since $z>0$, $A>0$, and $\psi'>0$, one has that $\mathcal{L}w(x,y)-\rho w(x,y) \leq \widehat{Q}(x,y)$, where we have set
	 $$\widehat{Q}(x,y):=\Big[(a-bx)z-\rho(x-c)z+\frac{1}{2}\rho\alpha z^2\Big]\Big|_{z=z(x,y)}.$$
	 
	 Observe that $\widehat{Q}(F^{-1}(y),y)=0$ since $z(F^{-1}(y),y)=0$ (cf.\ \eqref{eq54}). Hence, it suffices to show that $\widehat{Q}_x(x,y)<0$ for all $(x,y)\in\mathbb{S}_2$. Differentiating $\widehat{Q}$ with respect to $x$ gives
	 \begin{align*}
	 \widehat{Q}_x(x,y)=z(x,y)\Big(-b-\rho+\rho\alpha z_x(x,y)\Big)+z_x(x,y)\Big[(a-bx)-\rho(x-c)\Big].
	 \end{align*} 
	 
	 Since $z_x>0$ and $\alpha z_x<1$ (cf.\ \eqref{eq20} and recall that $F^\prime<0$), and $x\geq F^{-1}(y)\geq x_\infty$, we find 
	 \begin{align}\label{widebarQx}
	 \begin{split}
	 \widehat{Q}_x(x,y)&\leq z_x(x,y)\Big[a+\rho c-F^{-1}(y)(\rho + b)\Big]\\
	 &\leq z_x(x,y)\big[a+\rho c-x_\infty(\rho + b)\big]=z_x(x,y)(\rho + b)\big(\bar{x}-x_\infty\big),
	 \end{split}	 
	 \end{align} 
	 and clearly $\widehat{Q}_x(x,y)\leq 0$ if $a-bc \leq 0$, since the latter implies $\bar{x} \leq c < x_\infty$. 
	 
	 	 This shows that $\widehat{Q}<0$ on $\mathbb{S}_2$, and therefore that $w$ solves \eqref{cond6} in $\mathbb{S}_2$ if $a-bc\leq0$.
	 	 \vspace{0.15cm}
	 	 
	 	 \emph{(ii)} Assume that $a-bc>0$. In this case, as discussed in Remark \ref{rem:verificoBech}, we did not succeed proving \eqref{cond6} by studying the sign of $\mathcal{L}w-\rho w$ as done in \emph{(i)} above. Therefore, we follow a different approach which is based on that developed in the proof of Lemma 6.7 in \cite{Becherer}. Here we just provide the main ideas, since most of the arguments follow from \cite{Becherer}.
	 	 
Let $(x,y)\in\overline{\mathbb{W}}\cap\mathbb{S}_2$ be given and fixed, and consider an arbitrary $z_o>0$. From \eqref{optimalExercise} we find $z(x+\alpha z_o,y+z_o)=z_o$, and employing the latter we have from \eqref{CaFnc}, \eqref{eq24} and \eqref{eq25} that
	  \begin{align*}
	  &\mathcal{L}w(x+\alpha z_o,y+z_o)-\rho w(x+\alpha z_o,y+z_o)\\
	  &=-\alpha b z_o A(F(x))\psi'(x)+\big(a-b(x+\alpha z_o)\big)z_o-\rho\big((x+\alpha z_o)-c\big)z_o+\frac{1}{2}\rho\alpha z_o^2=:U(z_o).
	  \end{align*} 
	  Notice that $U(0)=0$, hence to show negativity of $U$ it suffices to prove that $U'(z_o)\leq 0$ for all $z_o > 0$. We find
	  \begin{align}
	  \label{Uprime}
	  \begin{split}
	  U'(z_o)&=-\alpha bA(F(x))\psi'(x)-\alpha bz_o+(a-b(x+\alpha z_o))-\rho(x+\alpha z_o-c)\\
	  &=b\left(x-c-\alpha A(F(x))\psi'(x)\right)+(x+\alpha z_o-c)\left[-(b+\rho)+\frac{a-b(x+\alpha z_o)}{(x+\alpha z_o)-c}\right],
	  \end{split}	  
	  \end{align}
	 after rearranging terms, and adding and substracting the term $b(x-c)$ to obtain the second equality above. Now, define the function  
	\begin{equation}
	\label{eq:kappa}
	\kappa(x):=-(b+\rho)+\frac{a-bx}{x-c},
	\end{equation}
	and notice that 
	 \begin{align*}
	\kappa(x_\infty)=\left(\psi'(x_\infty)\right)^{-1}\left((a-bx_\infty)\psi''(x_\infty)-(b+\rho)\psi'(x_\infty)\right)=-\frac{\sigma^2}{2}\frac{\psi'''(x_\infty)}{\psi'(x_\infty)}<0,
	 \end{align*}
	where we have used that $x_\infty$ solves $x_\infty-c=\frac{\psi'(x_\infty)}{\psi''(x_\infty)}$ for the first equality, and Lemma \ref{Properties}-(2) with $k=1$ for the second equality. Moreover, $$\kappa'(x)=\frac{bc-a}{(x-c)^2}<0,$$ since $a>bc$, which then yields $\kappa(x) < 0$ for all $x>x_\infty$. From the monotonicity and the negativity of $\kappa$, and the fact that $z_o\mapsto (x+\alpha z_o-c)$ is positive and increasing as $x\geq x_\infty>c$, one obtains that $z_o \mapsto (x+\alpha z_o-c)\kappa(x+\alpha z_o)$ is decreasing. Therefore, one has $U'(z_o)\leq 0$ for all $z_o > 0$ if $U'(0+) \leq 0$.
	
To prove that the right-derivative $U'(0+)$ is negative, we now explain how to employ in our setting the arguments of the proof of Lemma 6.7 in \cite{Becherer}. First of all, we discuss the standing Assumption 2.2 in \cite{Becherer}. Conditions C2 and C3 are satisfied for $f(x)\equiv x-c$. If $a-bc>0$, then Condition C5 in Assumption 2.2 of \cite{Becherer} is satisfied for $f(x)\equiv x-c$, $\hat{\sigma}\equiv \sigma$, $\beta+\delta \equiv \rho$, $\sigma\rho\hat{\sigma} \equiv a$, and $\beta\equiv b$. Moreover, all the other requirements in Assumption 2.2 of \cite{Becherer} are not needed in our case. Indeed, Condition C6 guarantees the existence and uniqueness of (in our terminology) $x_0$ and $x_{\infty}$, that we already have by Lemma \ref{UniqueSol}; Condition C4 only ensures a growth condition on the value function that we have from Proposition \ref{GrowthV}, whereas, in our setting, Condition C1 of \cite{Becherer} just means that the discount factor must be strictly positive.
	
Then, after reformulating our singular stochastic control problem as a calculus of variations problem where one seeks for a decreasing $C^1$ function triggering a strategy of reflecting type (see Section 4 in \cite{Becherer}), proceeding as in Section 5 of \cite{Becherer} (see in particular Theorem 5.6 therein), one can prove that our free boundary $F^{-1}$ is a (one-sided) local maximizer of our performance criterion \eqref{PC}. Hence, a contradiction argument as that in the proof of Lemma 6.7 in \cite{Becherer} also applies in our case and yields that $U'(0+) \leq 0$. This completes the proof.
\end{proof}

\begin{remark}
\label{rem:verificoBech}
\begin{enumerate} \hspace{10cm}
\item As we have seen, the proof of \eqref{cond6} in $\mathbb{S}_2$ when $a-bc>0$ requires a different analysis, and here we try to explain why a more direct approach seems not to lead to the desired result. Assuming $a-bc>0$, if one aims at proving \eqref{cond6} by studying the sign of $\mathcal{L}w-\rho w$ in $\mathbb{S}_2$, given that $z:=z(x,y)\geq 0$ for all $(x,y)\in\mathbb{S}_2$, one could try to prove that (cf.\ \eqref{widebarQ})
$$L(x,y):=a-bx-\rho(x-c) + \frac{1}{2}\rho\alpha z - b\alpha A(F(x-\alpha z))\psi'(x-\alpha z)$$
is negative for any $(x,y)\in \mathbb{S}_2$. Calculations, employing \eqref{eq1} and the definition of $A'$ (cf.\ \eqref{DA}), reveal that for any $y>0$ one has $L(F^{-1}(y),y)=\chi(F^{-1}(y))$, where, for any $u \in (x_{\infty},x_0]$, we have set
$$\chi(u):= (\rho + 2b)(\widehat{x}-u) + b\psi(u)\left[\frac{(u-c)\psi''(u) - \psi'(u)}{\psi''(u)\psi(u)-\psi'(u)^2} \right],$$
with $\widehat{x}:=\frac{a + (\rho + b)c}{\rho + 2b}<x_{\infty}$. By noticing that $A(F(x-\alpha z))\psi'(x-\alpha z)=w_x(x,y)-z$ in $\mathbb{S}_2$ (cf.\ \eqref{eq24}), one has that $L$ rewrites as $L(x,y)=a-bx-\rho(x-c) + \frac{1}{2}\rho\alpha z + b\alpha z - b\alpha w_x(x,y)$, and because $\alpha z_x < 1$ by \eqref{eq20} and $w_{xx} \geq 0$ by \eqref{eq25}, it is easy to see that $L_x<0$ on $\mathbb{S}_2$.

Hence, to prove that $L<0$ on $\mathbb{S}_2$ it would suffice to show that $\chi<0$ on $(x_{\infty},x_0]$. However, we have not been able to prove this property due to the unhandy implicit expression of the function $\psi$, even if a numerical investigation seems to confirm negativity of $\chi$. For this technical reason in \emph{Step 3-(ii)} of the proof of Theorem \ref{HJB} we have hinged on arguments as those originally developed in \cite{Becherer} to address the case $a-bc>0$.

\item It is also worth noticing that the calculus of variations approach of \cite{Becherer} would have not been directly applicable for any choice of the parameters. Indeed, when $a-bc<0$, the function $\kappa$ of \eqref{eq:kappa} is increasing and therefore has not the monotonicity required in Condition C5 of Assumption 2.2 of \cite{Becherer}. However, under such a parameters' restriction, direct calculations as those developed in \emph{Step 3-(i)} of the proof of Proposition \ref{HJB} lead to the desired result. This fact suggests that a combined use of the calculus of variations method and of the more standard direct study of the HJB equation could be successful in complex situations where neither of the two methods seem to leed to the proof of optimality of a candidate value function for any choice of the model's parameters.
\end{enumerate}
\end{remark}

We conclude by showing that $w$ of \eqref{CaFnc} identifies with the value function $V$. As a byproduct we also provide an optimal extraction rule. We first need the following technical result. Its proof follows by suitably adopting the classical result in \cite{dupuis}, upon considering the following joint process $(X,\zeta)$ as a (degenerate) diffusion in $\mathbb{R}^2$ with oblique reflection in the direction $(-\alpha,-1)$ at the $C^{\infty}$-free boundary $F$ (see also \cite{Becherer}, Remark 4.2).

\begin{lemma}
\label{Skoro}
Let $(x,y) \in \mathbb{R}\times (0,\infty)$, $F$ be given as in \eqref{boundary}, $z:=z(x,y)$ solving \eqref{optimalExercise}, and let $\Delta:=\Delta(x,y)=y\mathds{1}_{\{(x,y) \in \mathbb{S}_1\}}+z\mathds{1}_{\{(x,y) \in \mathbb{S}_2\}}$. Then there exists a (pathwise) unique $\mathbb{F}$-adapted continuous $(X,\zeta)$, with $\zeta$ increasing, such that
\begin{align*}
		X_t&\leq F^{-1}(y-\Delta-\zeta_t),\\
		dX_t&=\big(a-bX_t\big)dt+\sigma dW_t-\alpha d\zeta_t,\\
		d\zeta_t&=\mathds{1}_{\{X_t=F^{-1}(y-\Delta-\zeta_t)\}}d\zeta_t,
		\end{align*}
		for any $0 \leq t \leq \tau_{\zeta}$, $\tau_{\zeta}:=\inf\{t\geq 0:\, \zeta_t \geq y-\Delta\}$, and starting point $(X_0,\zeta_0)=(x-\alpha\Delta,0)$. 
\end{lemma}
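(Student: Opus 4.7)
The plan is to recast the coupled system as a reflected diffusion in $\mathbb{R}^2$ and then appeal to the existence/uniqueness theory for Skorokhod problems with oblique reflection, as developed in \cite{dupuis}. Specifically, introduce the auxiliary state $Y_t := y - \Delta - \zeta_t$, so that $dY_t = -d\zeta_t$ and the joint two-dimensional process $(X_t, Y_t)$ must live in the closed domain
\[
D := \overline{\mathbb{W}} \cap \big(\mathbb{R}\times [0,\infty)\big) = \{(x',y'): y'\geq 0,\ x' \leq F^{-1}(y')\},
\]
starting from the point $(x-\alpha\Delta, y-\Delta)$, which by construction lies on $\partial D$ (namely $y-\Delta = F(x-\alpha\Delta)$ by \eqref{optimalExercise} when $(x,y)\in\mathbb{S}_2$, or on the corner $\{y'=0\}$ when $(x,y)\in\mathbb{S}_1$). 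The direction of reflection is the constant vector $\gamma := (-\alpha,-1)$, and the associated reflected SDE in $D$ is
\[
d(X_t,Y_t) = \big(a-bX_t,\ 0\big)dt + (\sigma,0)\,dW_t + \gamma\, d\zeta_t,\qquad d\zeta_t = \mathds{1}_{\{(X_t,Y_t)\in\partial D\}}\,d\zeta_t,
\]
exactly encoding the three displayed conditions of the lemma, up to the stopping time $\tau_\zeta$ at which $Y_t=0$.

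Next I would verify the geometric hypotheses of the oblique Skorokhod theory on the ``curved'' part of $\partial D$, namely $\Gamma := \{(F^{-1}(y'),y'):y'>0\}$. By Proposition \ref{propertiesIntegrand} the function $F$ is $C^\infty$ and strictly decreasing on $(x_\infty,x_0]$, hence $\Gamma$ is a $C^\infty$ one-dimensional manifold with inward unit normal proportional to $(-1,(F^{-1})'(y'))$. The key non-tangency condition is satisfied uniformly on $\Gamma$, since
\[
\gamma \cdot \big(-1,(F^{-1})'(y')\big) = \alpha - (F^{-1})'(y') \geq \alpha > 0,
\]
as $F^{-1}$ is strictly decreasing on its domain. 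Moreover, the reflection direction is constant, so the Lipschitz continuity assumption on the direction field in \cite{dupuis} is trivially met, and the domain $D$ is the epigraph of the smooth decreasing map $y'\mapsto F^{-1}(y')$ (extended suitably below $y'=0$), which has the required regularity for the deterministic Skorokhod map to be well-defined and Lipschitz in the Skorokhod topology. This constructs pathwise the reflection map $\Pi:C([0,\infty);\mathbb{R}^2)\to C([0,\infty);D)\times C_{\uparrow}([0,\infty);[0,\infty))$ on any unreflected path.

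With the Skorokhod map in hand, pathwise existence and uniqueness of the continuous adapted pair $(X,\zeta)$ on the random interval $[0,\tau_\zeta]$ follows by the standard Picard iteration on the unreflected driver $t\mapsto (x-\alpha\Delta) + \int_0^t(a-bX_s)ds + \sigma W_t$ combined with the Lipschitz continuity of $\Pi$; the argument is essentially the one used in \cite{Becherer}, Remark 4.2, and the mean-reverting drift $a-bX_t$ presents no additional difficulty because it is globally Lipschitz in $X$. The process is then stopped at $\tau_\zeta = \inf\{t\geq 0: \zeta_t \geq y-\Delta\}$, at which instant $Y_t=0$ and the reservoir is depleted.

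The main technical obstacle is verifying that the results of \cite{dupuis}, stated for bounded smooth domains with non-degenerate diffusion coefficients, apply to our unbounded domain $D$ with a diffusion that is degenerate in the $y$-direction. I would handle this in two standard steps: \emph{(a)} truncate $D$ by intersecting with a vertical strip $\{|x'|\leq R\}$ to obtain a smooth domain on which \cite{dupuis} applies verbatim, producing a unique reflected process up to the exit time $\tau_R$ of the strip; \emph{(b)} pass to the limit $R\uparrow\infty$ using pathwise consistency of the solutions (two solutions agree up to $\tau_R$ for all $R$) to obtain a global solution. The degeneracy in the $Y$-component is innocuous because $Y$ is purely driven by the control $\zeta$, so the reflection on $\Gamma$ is entirely fed by the non-degenerate $X$-component, exactly as discussed in Remark 4.2 of \cite{Becherer}.
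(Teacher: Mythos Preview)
Your proposal is correct and follows precisely the approach the paper sketches: the paper does not give a detailed proof but merely remarks that the result follows from \cite{dupuis} by viewing the joint process as a degenerate two-dimensional diffusion with oblique reflection in the direction $(-\alpha,-1)$ at the $C^\infty$-free boundary $F$, and refers to Remark 4.2 in \cite{Becherer}. Your writeup actually supplies more detail than the paper itself, explicitly checking the non-tangency condition $\gamma\cdot n\geq\alpha>0$ and outlining the truncation argument for the unbounded, degenerate domain; one small inaccuracy is that when $(x,y)\in\mathbb{W}$ the starting point $(x-\alpha\Delta,y-\Delta)=(x,y)$ lies in the interior of $D$ rather than on $\partial D$, but this is harmless for the Skorokhod construction.
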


 \begin{theorem}
\label{optsolOU}
	Recall the functions $F$ and $w$ from \eqref{boundary} and \eqref{CaFnc}, respectively. The function $w$ identifies with the value function $V$ from \eqref{ValueFnc}, and the optimal extraction strategy, denoted by $\xi^\star$, is given by
		\begin{align}
		\label{xistarOU}
		\xi^\star_t=
		\begin{cases}
		\Delta+\zeta_t,\quad &t\in[0,\tau_{\zeta}),\\
		y,\quad &t\geq\tau_{\zeta},
		\end{cases}
		\end{align} 
		with 	$\xi^\star_{0-}=0$, and with $\Delta$, $\zeta$, and $\tau_{\zeta}$ as in Lemma \ref{Skoro}.
\end{theorem}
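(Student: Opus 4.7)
The plan is to apply the Verification Theorem \ref{VerificationTheorem}. By Lemma \ref{Smooth} and Proposition \ref{HJB}, $w \in C^{2,1}(\mathbb{R}\times[0,\infty))$ solves the HJB equation \eqref{HJB1} with boundary condition $w(x,0)=0$. Hence it remains to check (a) monotonicity of $w(x,\cdot)$, (b) the growth bound \eqref{eq15}, and (c) that the process $\xi^\star$ defined in \eqref{xistarOU} is admissible and satisfies the reflection conditions \eqref{eq2VT}--\eqref{eq3VT}.

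For (a), in $\mathbb{W}$ one has $w_y(x,y)=A'(y)\psi(x)$ with $A'(y)=N(F^{-1}(y))$ from \eqref{DA}; since $F^{-1}(y)\in(x_\infty,x_0]$, Corollary \ref{Order} gives $(F^{-1}(y)-c)\psi''(F^{-1}(y))-\psi'(F^{-1}(y))>0$, and Lemma \ref{Properties}-(3) forces the denominator of $N$ to be positive, so $A'\geq 0$ and $w_y\geq 0$. In $\mathbb{S}_2$, \eqref{eq26} gives $w_y=A'(F(x-\alpha z))\psi(x-\alpha z)\geq 0$ by the same argument, and in $\mathbb{S}_1$, \eqref{DerS1} yields $w_y=x-c-\alpha y \geq x-c-(x-x_0)=x_0-c>0$, since $y\leq(x-x_0)/\alpha$ in $\mathbb{S}_1$.

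For (b), in $\mathbb{W}$ we have $x< F^{-1}(y)\leq x_0$ so $\psi(x)\leq\psi(x_0)$, while $A(y)=M(F^{-1}(y))$ is bounded on $(x_\infty,x_0]$ (it tends to the finite limit $M(x_\infty)$ as $y\uparrow\infty$), so $w$ is uniformly bounded on $\mathbb{W}$. In $\mathbb{S}_1$, $w(x,y)=(x-c)y-\tfrac12\alpha y^2\leq(|x|+c)y$. In $\mathbb{S}_2$, using \eqref{optimalExercise} to write $y=z+F(x-\alpha z)$ with $F(x-\alpha z)\leq F(x_\infty)$ bounded above by the behavior of $F$ near $x_\infty$ (and noting $z\leq y$ by Lemma \ref{Existencez}), a direct expansion of \eqref{CaFnc} gives $w(x,y)\leq C(1+y)(1+|x|)$ for a universal constant. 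Combining the three estimates yields \eqref{eq15}.

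For (c), Lemma \ref{Skoro} produces a pathwise unique adapted continuous pair $(X,\zeta)$ solving the Skorokhod problem up to $\tau_\zeta$. Setting $\xi^\star$ as in \eqref{xistarOU}, admissibility follows since $\Delta\in[0,y]$, $\zeta$ is continuous increasing with $\zeta_{\tau_\zeta}=y-\Delta$, and $\xi^\star$ is stopped at $y$. By construction, the initial jump $\Delta$ sends $(x,y)\in\mathbb{S}_1\cup\mathbb{S}_2$ to either $(x-\alpha y,0)$ (depletion) or to the boundary point $(x-\alpha z, F(x-\alpha z))$ where $y-z=F(x-\alpha z)$ (cf.\ \eqref{optimalExercise}); in $\mathbb{W}$, $\Delta=0$. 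In all cases the post-jump state lies in $\overline{\mathbb{W}}$, and the reflection property $d\zeta_t=\mathds{1}_{\{X_t=F^{-1}(Y_t)\}}d\zeta_t$ keeps $(X^{x,\xi^\star}_t,Y^{y,\xi^\star}_t)\in\overline{\mathbb{W}}$ for $t<\tau_\zeta$; for $t\geq\tau_\zeta$ the reservoir is empty and the state lies on $\mathbb{R}\times\{0\}\subset\overline{\mathbb{W}}$. Hence \eqref{eq2VT} holds. Finally, $\xi^\star$ increases only through the initial jump (at which the pre-jump state lies in $\mathbb{S}$) and through $d\zeta_t$ at the boundary $\{X_t=F^{-1}(Y_t)\}\subset\mathbb{S}$, so \eqref{eq3VT} is satisfied. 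Theorem \ref{VerificationTheorem} then delivers $w=V$ and optimality of $\xi^\star$.

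The main technical obstacle is the growth estimate on $\mathbb{S}_2$: this requires using \eqref{optimalExercise} to control $z=z(x,y)$ together with the asymptotic behavior of $F$ near $x_\infty$ from Proposition \ref{propertiesIntegrand}, so that the continuation-value term $A(F(x-\alpha z))\psi(x-\alpha z)$ remains uniformly bounded. All other steps are routine verifications.
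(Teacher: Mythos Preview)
Your approach is the paper's: invoke Lemma \ref{Smooth} and Proposition \ref{HJB}, verify monotonicity in $y$ and the growth condition \eqref{eq15}, and then feed $\xi^\star$ into Theorem \ref{VerificationTheorem}. Two points need repair.

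First, your upper bound in $\mathbb{S}_2$ is muddled. You write ``$F(x-\alpha z)\leq F(x_\infty)$ bounded above by the behavior of $F$ near $x_\infty$,'' but Proposition \ref{propertiesIntegrand} gives $\lim_{x\downarrow x_\infty}F(x)=\infty$, so that inequality is vacuous. The clean argument (which the paper uses, and which your final paragraph essentially gets right) is direct: since $x-\alpha z\in(x_\infty,x_0]$, the continuation term $A(F(x-\alpha z))\psi(x-\alpha z)=M(x-\alpha z)\psi(x-\alpha z)$ is uniformly bounded because $M$ is continuous on $[x_\infty,x_0]$ and $\psi(x-\alpha z)\leq\psi(x_0)$; meanwhile $(x-c)z-\tfrac12\alpha z^2\leq(x-c)z\leq(x-c)y$ since $0\leq z\leq y$ (Lemma \ref{Existencez}) and $x\geq x_\infty>c$. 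No asymptotics of $F$ enter.

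Second, you have not checked the lower bound $w\geq 0$, which is part of \eqref{eq15} and is required by Theorem \ref{VerificationTheorem}. In $\mathbb{W}$ this is immediate from $A\geq 0$ and $\psi>0$; in $\mathbb{S}_1$ use $\alpha y\leq x-x_0$ and $x_0>c$ to get $w(x,y)\geq y\big[\tfrac12(x-c)+\tfrac12(x_0-c)\big]\geq 0$; in $\mathbb{S}_2$ write $(x-c)z-\tfrac12\alpha z^2=\int_0^z(x-\alpha u-c)\,du\geq\int_0^z(x_\infty-c)\,du\geq 0$ since $z\leq(x-x_\infty)/\alpha$. The paper carries out exactly these checks.
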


\begin{proof}
We aim at applying Theorem \ref{VerificationTheorem}. We already know that $w\in C^{2,1}(\mathbb{R}\times [0,\infty))$ is a solution to the HJB equation \eqref{HJB1} by Lemma \ref{Smooth} and Proposition \ref{HJB}, and that satisfies $w(x,0)=0$ for all $x\in \mathbb{R}$. Moreover, the function $w$ is increasing with respect to $y$. To see that, notice that one has from \eqref{DA} that $A'(y)>0$, for $y>0$ (since the denominator of \eqref{DA} is positive by Lemma \ref{Properties}-(3) and the numerator is positive as well due to $F^{-1}(y) \geq x_{\infty}$), and this gives $w_y>0$ on $\mathbb{W}$ and on $\mathbb{S}_2$ (cf.\ \eqref{DerWait} and \eqref{eq26}). Also, one can easily check from \eqref{DerS1} that $w_y\geq 0$ on $\mathbb{S}_1$ because $y \leq (x-x_0)/\alpha$ and $x_0>c$.

To prove the upper bound in \eqref{eq15}, recall that (cf. \eqref{A(y)}) $$A(y)=\frac{(F^{-1}(y)-c)\psi'(F^{-1}(y))-\psi(F^{-1}(y))}{\alpha[\psi'(F^{-1}(y))^2-\psi''(F^{-1}(y))\psi(F^{-1}(y))]}, \quad y\geq 0.$$ 
Since $x_0\geq F^{-1}(y)\geq x_\infty$ for any $y \geq 0$, by using that $\psi,\,\psi'$ and $\psi''$ are continuous we have that there exists a constant $\overline{K}>0$ such that $A(y)\leq \overline{K}$ for all $y\geq0$. Hence, by \eqref{CaFnc} we have $w(x,y)\leq \overline{K}\psi(F^{-1}(y)) \leq \overline{K}\psi(x_0)$ for all $(x,y)\in\mathbb{W}$. Moreover, $0 \leq z(x,y)\leq y$ for all $(x,y)\in\mathbb{S}_2$ and thus $(x-c)z-\frac{1}{2}\alpha z\leq (x-c)z\leq(x-c)y$. Since the upper bound in \eqref{eq15} is clearly satisfied in $\mathbb{S}_1$, we conclude that there exists a constant $K>0$ such that $$w(x,y)\leq Ky(1+y)(1+|x|) \quad\text{for all }(x,y)\in\mathbb{R}\times (0,\infty).$$
	
As for the nonnegativity of $w$, notice that for all $(x,y)\in\mathbb{S}_1$ we have 
	\begin{align*}
	w(x,y)&=(x-c)y-\frac{1}{2}\alpha y^2\geq y\big[x-c-\frac{1}{2}(x-x_0)\big]\geq y\big[\frac{x_\infty-c}{2}+\frac{x_0-c}{2}\big]\geq 0,
	\end{align*}
	since $y\leq\frac{x-x_0}{\alpha}$, $x\geq F^{-1}(y)\geq x_\infty$ and $x_0>x_\infty>c$. Moreover, the nonnegativity of $\psi$ and $A$ imply $$w(x,y)\geq 0,\quad \text{for all }(x,y)\in\mathbb{W},$$ and also, given $(x,y)\in\mathbb{S}_2$, we have $$w(x,y)=A(F(x-\alpha z))\psi(x-\alpha z)+(x-c)z-\frac{1}{2}\alpha z^2\geq\int_{0}^{z}(x-\alpha u-c)du\geq\int_{0}^{z}(x_\infty-c)du\geq 0,$$ since $0 \leq z \leq \frac{x-x_\infty}{\alpha}$ and $x_{\infty}>c$. Therefore $w\geq0$ on $\mathbb{R} \times [0,\infty)$.
	
Now, since $\xi^\star$ satisfies \eqref{eq2VT} and \eqref{eq3VT}, by Theorem \ref{VerificationTheorem} we therefore conclude that $w$ identifies with $V$, and that $\xi^\star$ is an optimal extraction strategy.
\end{proof}

\begin{figure}
\centering
\includegraphics[width=1\textwidth]{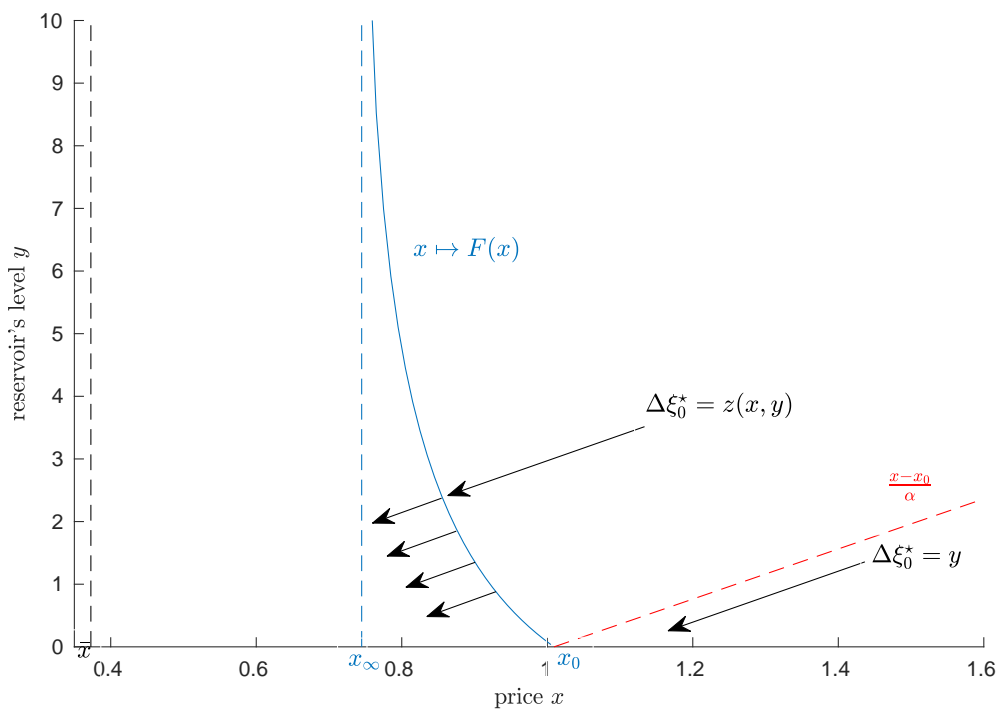}
\
\caption{A graphical illustration of the optimal extraction rule $\xi^{\star}$ (cf.\ \eqref{xistarOU}) and of the free boundary $F$. The plot has been obtained by using $a=0.4,\, \sigma=0.8,\, \rho=3/8,\, c=0.3,\, b=1,\, \alpha=0.25$, and by numerically evaluating the free boundary of \eqref{boundary}. The optimal extraction rule prescribes the following. In the region $\{(x,y) \in \mathbb{R} \times (0,\infty):\, y < F(x)\}$ it is optimal not to extract. If at initial time $(x,y)$ is such that $x > F^{-1}(y)$ and $y \leq (x-x_0)/\alpha$, then the reservoir should be immediately depleted. On the other hand, if $(x,y)$ is such that $x\geq F^{-1}(y)$ and $y > (x-x_0)/\alpha$, then one should make a lump sum extraction of suitable size $z(x,y)$, and then keep on extracting until the commodity is exhausted by just preventing the (optimally controlled) process $(X,Y)$ to leave the region $\{(x,y) \in \mathbb{R} \times (0,\infty):\, y \leq F(x)\}$.}
\end{figure}


\subsubsection{A Related Optimal Stopping Problem}
\label{sec:relatedOS}

In this section we show that the directional derivative $u:=\alpha V_x+V_y$ identifies with the value function of an optimal stopping problem. Such a result is consistent with that obtained - for a different model with Brownian dynamics - in \cite{karatzas2}, where connections between finite-fuel singular stochastic control problems and questions of optimal stopping have been studied. 

\begin{proposition}
\label{prop:relatedOS}
The function $u:\mathbb{R}\times[0,\infty)\mapsto\mathbb{R}$ defined by $$u(x,y):=\alpha V_x(x,y)+V_y(x,y)$$ admits the probabilistic representation
\begin{align}
\label{RepresentationStopp}
u(x,y)=\sup_{\tau\geq 0}\mathbb{E}\bigg[e^{-\rho\tau}\left(X^x_\tau-c\right)-\int_{0}^{\tau}e^{-\rho s}\alpha bA(y)\psi'(X^x_s)ds\bigg], \quad (x,y) \in \mathbb{R}\times[0,\infty),
\end{align}
where the optimization is taken over the set of $\mathbb{F}$-stopping times. Moreover, for $F$ as in \eqref{boundary}, we have that the stopping time
$$\tau^\star(x;y)=\inf\{t\geq 0: X_t^x \geq F^{-1}(y)\}, \quad (x,y) \in \mathbb{R}\times[0,\infty),$$
is optimal in \eqref{RepresentationStopp}.
\end{proposition}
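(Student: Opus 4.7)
The plan is to identify $u=\alpha V_x+V_y$ with the value function of the optimal stopping problem by a verification argument that leverages the explicit form of $V$ given in \eqref{CaFnc} together with the regularity $V\in C^{2,1}$ (Lemma \ref{Smooth}). The first task is to compute $u$ explicitly. Plugging \eqref{DerWait}, \eqref{DerS1}, \eqref{eq24} and \eqref{eq26} into the definition of $u$, and invoking the boundary equation \eqref{eq1} at the argument $x-\alpha z=F^{-1}(y-z)$, a direct computation (with cancellations in $\mathbb{S}_2$) yields
\begin{equation*}
u(x,y)=\alpha A(y)\psi'(x)+A'(y)\psi(x)\text{ on }\mathbb{W},\qquad u(x,y)=x-c\text{ on }\mathbb{S}.
\end{equation*}
In particular $u\geq x-c$ everywhere, with equality on $\mathbb{S}$ and on the free boundary $\{x=F^{-1}(y)\}$; the strict inequality on $\mathbb{W}$ is inherited from the HJB inequality \eqref{cond5}. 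Moreover, differentiating the relation $(\mathcal{L}-\rho)V=0$ on $\mathbb{W}$ and using the commutator identity $\mathcal{L}V_x=(\mathcal{L}V)_x+bV_x$ (valid for the Ornstein-Uhlenbeck generator \eqref{InfinitesOp}) together with $\mathcal{L}V_y=(\mathcal{L}V)_y$, one obtains
\begin{equation*}
(\mathcal{L}-\rho)u=\alpha b V_x=\alpha b A(y)\psi'(x)\text{ on }\mathbb{W},
\end{equation*}
which is exactly the continuation equation of the obstacle problem associated with \eqref{RepresentationStopp}. Smooth fit $u_x(F^{-1}(y),y)=1$ is immediate from \eqref{eq2}.

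For the verification, fix $(x,y)$ and an $\mathbb{F}$-stopping time $\tau$. On $[0,\tau\wedge\tau^\star]$ the process $X^x$ stays in $\overline{\mathbb{W}}$, where $u$ is smooth in $x$. Applying It\^o's formula to $e^{-\rho t}u(X_t^x,y)$ (with $y$ fixed), localizing with $\tau_R:=\inf\{t\geq 0:|X_t^x|\geq R\}$, taking expectations, and letting $R\uparrow\infty$ (the required integrability is controlled by Proposition \ref{GrowthV} and the Ornstein-Uhlenbeck moment estimates developed in the proof of Theorem \ref{VerificationTheorem}) yields
\begin{equation*}
u(x,y)=\mathbb{E}\bigg[e^{-\rho(\tau\wedge\tau^\star)}u(X_{\tau\wedge\tau^\star}^x,y)-\int_0^{\tau\wedge\tau^\star}e^{-\rho s}\alpha b A(y)\psi'(X_s^x)ds\bigg].
\end{equation*}
Bounding $u$ from below by $x-c$ on the right-hand side gives $u(x,y)\geq J(x,y,\tau\wedge\tau^\star)$, where $J(x,y,\tau)$ denotes the payoff under $\tau$ in \eqref{RepresentationStopp}. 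The choice $\tau=\tau^\star$ produces equality, since $X_{\tau^\star}^x=F^{-1}(y)$ and $u(F^{-1}(y),y)=F^{-1}(y)-c$, so that $u(x,y)=J(x,y,\tau^\star)$ and thus $u\leq\sup_\tau J$.

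The main obstacle is the reverse bound $\sup_\tau J\leq u$ in the event $\{\tau>\tau^\star\}$. By a strong Markov reduction at $\tau^\star$, this is equivalent to showing that the process $M_t:=e^{-\rho t}u(X_t^x,y)-\int_0^t e^{-\rho s}\alpha b A(y)\psi'(X_s^x)ds$ is a global supermartingale; the It\^o-Meyer formula is applicable because $u\in C^{1,0}$ with $u_{xx}$ piecewise continuous (no local-time correction arises since $u_x$ is continuous across the free boundary by smooth fit). The drift of $M$ vanishes on $\mathbb{W}$ by the PDE above, while on $\mathbb{S}$ it equals $e^{-\rho t}[a+\rho c-(b+\rho)X_t^x-\alpha b A(y)\psi'(X_t^x)]$, whose nonpositivity is linear-decreasing in $x$ (since $\psi''>0$) and hence reduces to its value at the boundary $x=F^{-1}(y)$. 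A computation using \eqref{eq1} and \eqref{DA} identifies that boundary value, up to sign, with the function $\chi$ of Remark \ref{rem:verificoBech}. Consequently, the case $a-bc\leq 0$ is settled by direct estimates along the lines of Step 3-(i) of Proposition \ref{HJB}, while the case $a-bc>0$ requires the calculus-of-variations workaround of \cite{Becherer} exploited in Step 3-(ii). Once global supermartingality is secured, optional sampling gives $\mathbb{E}[M_\tau]\leq u(x,y)$ and hence $J(x,y,\tau)\leq u(x,y)$ for every stopping time, so that $u=\sup_\tau J$ and $\tau^\star$ is optimal.
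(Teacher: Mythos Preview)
Your verification framework is sound and matches the paper's: compute $u$ explicitly on $\mathbb{W}$ and $\mathbb{S}$, check the continuation PDE $(\mathcal{L}-\rho)u=\alpha b A(y)\psi'(x)$ on $\mathbb{W}$, the obstacle identity $u=x-c$ on $\mathbb{S}$, smooth fit, and then the supermartingale inequality $(\mathcal{L}-\rho)u-\alpha b A(y)\psi'(x)\leq 0$ on $\mathbb{S}$; a standard verification theorem for optimal stopping then delivers \eqref{RepresentationStopp} and the optimality of $\tau^\star$.

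The one substantive divergence is in how you handle the boundary value of the drift on $\mathbb{S}$. You correctly identify $H(F^{-1}(y),y)$ with the function $\chi$ of Remark~\ref{rem:verificoBech}, and then import the dichotomy $a-bc\le 0$ versus $a-bc>0$ from Step~3 of Proposition~\ref{HJB}, invoking the calculus-of-variations workaround of \cite{Becherer} in the second case. The paper avoids this entirely: it rewrites $H(F^{-1}(y),y)$ by substituting \eqref{A(y)} and then applying the ODEs satisfied by $\psi$ and $\psi'$ (Lemma~\ref{Properties}-(2) with $k=0,1$) to obtain the closed form
\[
H(u,y)=\frac{\sigma^2}{2}\big(\psi(u)\psi''(u)-\psi'(u)^2\big)^{-1}\Big[\psi'''(u)\big((u-c)\psi'(u)-\psi(u)\big)-\psi''(u)\big((u-c)\psi''(u)-\psi'(u)\big)\Big],
\]
whose strict negativity on $(x_\infty,x_0]$ is immediate from Corollary~\ref{Order} and Lemma~\ref{Properties}-(3). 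No case split, no appeal to \cite{Becherer}. In other words, the paper's argument here actually \emph{proves} $\chi<0$ directly --- the very inequality that Remark~\ref{rem:verificoBech} describes as elusive in the context of Proposition~\ref{HJB}. Your route is valid, but the paper's is shorter, self-contained, and uniform in the parameters.
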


\begin{proof}
For the rest of this proof, $y\in[0,\infty)$ will be given and fixed. Notice that $u(\cdot,y) \in C^1(\mathbb{R})$ by construction (cf.\ \eqref{eq1} and \eqref{eq2}). Moreover, direct calculations on \eqref{CaFnc} show that $u_{xx}(\cdot,y) \in L^{\infty}_{loc}(\mathbb{R})$. We now show that $u(\cdot,y)$ solves the HJB equation 
\begin{align}
\label{HJB2}
\max\Big\{\mathcal{L}w(x)-\rho w(x)-\alpha bA(y)\psi'(x),\, x-c-w(x)\Big\}=0, \quad \text{a.e.}\ x\in\mathbb{R}.
\end{align}

Recall the selling region $\mathbb{S}$ and the waiting region $\mathbb{W}$. Let $x\in\mathbb{R}$ be such that $(x,y)\in\mathbb{W}$, and notice that by \eqref{CaFnc} we have 
$$V_x(x,y)=A(y)\psi'(x),\quad \text{and}\quad V_y(x,y)=A'(y)\psi(x).$$
Then, since $u=\alpha V_x+V_y$,
\begin{align*}
&\cL u(x,y)-\rho u(x,y)-\alpha bA(y)\psi'(x)\\
=&\frac{1}{2}\sigma^2\left(\alpha A(y)\psi'''(x)+A'(y)\psi''(x)\right)+(a-bx)\left(\alpha A(y)\psi''(x)+A'(y)\psi'(x)\right)\\
&-(\rho+b)\alpha A(y)\psi'(x)-\rho A'(y)\psi(x) \\
=& \alpha A(y) \big(\cL \psi'(x)-(\rho+b) \psi'(x)\big) + A'(y) \big(\cL \psi(x)-\rho \psi(x)\big) =0,
\end{align*} 
upon using that $\psi^{(k)}$ satisfies Lemma \ref{Properties}-(2) with $k=0,1$.

Now, let $x\in\mathbb{R}$ be such that $(x,y)\in\mathbb{S}$, so that $u(x,y)=x-c$ (recall \eqref{cond2}). If $(x,y)\in\mathbb{S}_1$ then $x\geq x_0$, and using that $\alpha bA(y)\psi'(x)>0$ we obtain 
\begin{align*}
& \cL u(x,y)-\rho u(x,y)-\alpha bA(y)\psi'(x) =(a-bx)-\rho(x-c)-\alpha bA(y)\psi'(x)\\
& \leq a-(\rho+b)x+\rho c = (\rho + b)(\bar{x}-x) \leq 0,
\end{align*}
since $x_0 \geq \bar{x}$ by Lemma \ref{Propx_0xbar} in Appendix \ref{App:B}.

On the other hand, let $x\in\mathbb{R}$ be such that $(x,y)\in\mathbb{S}_2$, set $H(x,y):=\cL u(x,y) -\rho u(x,y)-\alpha bA(y)\psi'(x)$, and notice that $$\frac{\partial H(x,y)}{\partial x}=-(\rho+b)-\alpha b A(y)\psi''(x)<0,$$ due to the positivity of $A$ and $\psi''$. Thus, in order to prove that $\cL u(x,y) -\rho u(x,y)-\alpha bA(y)\psi'(x)\leq 0$ for all $(x,y)\in\mathbb{S}_2$, it is enough to prove that $H(F^{-1}(y),y)\leq 0$. Set $u:=F^{-1}(y)$; then, upon employing the definition of $A$ (cf. \eqref{A(y)}), we obtain
	\begin{align*}
	H(u,y)=&\left(\psi(u)\psi''(u)-\psi'(u)^2\right)^{-1}\times\\
	& \times \Big[(a-bu-\rho(u-c))\left(\psi(u)\psi''(u)-\psi'(u)^2\right)+b(u-c)\psi'(u)^2-b\psi(u)\psi'(u)\Big]\\
	=&\frac{\sigma^2}{2}\left(\psi(u)\psi''(u)-\psi'(u)^2\right)^{-1}\times\\
	& \times \Big[\psi'''(u)\left[(u-c)\psi'(u)-\psi(u)\right]-\psi''(u)\left[(u-c)\psi''(u)-\psi'(u)\right]\Big] < 0,
	\end{align*} 
where we have applied Lemma \ref{Properties}-(2) with $k=0$ and $k=1$ for the last equality, and the last inequality follows from Corollary \ref{Order} since $x_{\infty}<u\leq x_0$. Hence, $\cL u(x,y) -\rho u(x,y)-\alpha bA(y)\psi'(x) \leq 0$ on $\mathbb{S}_2$.

Finally, from Proposition \ref{HJB} we have $x-c-u(x,y)\leq 0$ for any $x \in \mathbb{R}$.

The previous inequalities show that $u(\cdot,y)$ identifies with a $W^{2,\infty}_{loc}(\mathbb{R})$-solution to \eqref{HJB2}. Then, a standard verification theorem based on an application of (a generalized version of) It\^o's formula, implies that $u(\cdot,y)$ admits representation \eqref{RepresentationStopp} and that the stopping time $\tau^\star(x;y)=\inf\{t\geq 0: X_t^x \geq F^{-1}(y)\}$ attains the supremum.
\end{proof}

\begin{remark}
\label{rem:OS}
A few comments are worth being done.
\begin{itemize}
\item[1.] With regard to the connection between problems of singular stochastic control and questions of optimal stopping (see, e.g., \cite{EKK}, \cite{EKK2}, \cite{KS84}, and \cite{karatzas2} as early contributions, and the introduction of the recent \cite{DeAFe16} for a richer literature review), we can interpret the stopping time $\tau^\star(x;y)$ as the optimal time at which an additional unit of the commodity should be extracted. Indeed, the underlying process at that time is such that, in economic terms, equality between the marginal expected optimal profit (i.e.\ $\alpha V_x + V_y$) and the marginal instantaneous net profit from extraction (i.e.\ $x-c$) holds.

\item[2.] If we do not consider price impact in our model (i.e.\ we take $\alpha=0$), it can be easily seen that the value function of the resulting optimal extraction problem $V$ is such that
$$V_y(x,y)= \sup_{\tau\geq 0}\mathbb{E}\Big[e^{-\rho\tau}\left(X^x_\tau-c\right)\Big],$$
a result that is clearly consistent with \eqref{RepresentationStopp}. The integral term 
$$-\int_{0}^{\tau}e^{-\rho s} \alpha b A(y)\psi'(X^x_s)ds$$ 
appearing in \eqref{RepresentationStopp} can then be seen as a running cost/penalty whose effect increases with increasing price impact $\alpha$. 

\item[3.] It can be checked that the arguments of the proof of Proposition \ref{prop:relatedOS} carry over also to the case of a fundamental price given by a drifted Brownian motion, i.e.\ when $b=0$ (cf.\ Section \ref{sec:ABM}). As one would expect by setting $b=0$ in the right-hand side of \eqref{RepresentationStopp}, in such a case it holds
$$\alpha V_x(x,y) + V_y(x,y) = \sup_{\tau\geq 0}\mathbb{E}\Big[e^{-\rho\tau}\left(X^x_\tau-c\right)\Big],$$
so that the stopping problem related to the optimal extraction problem does not depend on the current level of the reservoir $y$. This explains why, in in the drifted Brownian motion case studied in Section \ref{sec:ABM}, the free boundary $x^{\star}$ triggering the optimal extraction rule is $y$-independent.
\end{itemize}
\end{remark}

\section{Comparative Statics Analysis}
\label{sec:compstatics}

In this section, we study the sensitivity of the solution to the extraction problem separately for the case of a fundamental price given by a drifted Brownian motion (Section \ref{comp:ABM}) and by an Ornstein-Uhlenbeck process (Section \ref{comp:OU}). In particular, in Section \ref{comp:ABM} we analytically determine the dependency of the free boundary $x^{\star}$ of \eqref{xstarABM} and of the value function \eqref{CaValueABM} on the parameters $a$ and $\sigma$. In Section \ref{comp:OU} we study analytically how the value function \eqref{CaFnc} and the critical price levels $x_0$ and $x_\infty$ from Lemma \ref{xinfty} depend on $a$ and $\sigma$, and, numerically, the sensitivity of the free boundary $F$ with respect to $a$, $\sigma$ and $b$.

\subsection{Sensitivity Analysis in the Case of a Drifted Brownian Motion Fundamental Price}
\label{comp:ABM}

Here we assume $b=0$ in \eqref{affectedX}. Thanks to the explicit formula \eqref{xstarABM}, studying the sensitivity of the free boundary $x^{\star}$ with respect to the parameters $a$ and $\sigma$ is a simple exercise of differentiation.

\begin{proposition}
\label{propx0}
The free boundary $x^\star$ of \eqref{xstarABM} is increasing with respect to both $a$ and $\sigma$. 
\end{proposition}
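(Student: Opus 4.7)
The plan is to note that $x^\star = c + 1/n$, so the monotonicity of $x^\star$ in $a$ and $\sigma$ reduces to showing that $n$ is strictly decreasing in both parameters. Rather than differentiating the closed-form expression \eqref{eq47} directly, the cleanest route is implicit differentiation of $B(n)=0$, where $B(u)=\tfrac{\sigma^2}{2}u^2 + au - \rho$ (cf.\ \eqref{eqB}).

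First I would observe that $B$ is a convex parabola with two real roots $n_- < 0 < n$, and that $B'(n) = \sigma^2 n + a$. Since the sum of the roots is $-2a/\sigma^2$, the larger root $n$ satisfies $n > -a/\sigma^2$, i.e.\ $B'(n)>0$. (Alternatively, $n>0$ and $\sqrt{(a/\sigma^2)^2 + 2\rho/\sigma^2}>|a|/\sigma^2$ directly give $\sigma^2 n + a = \sigma^2\sqrt{(a/\sigma^2)^2+2\rho/\sigma^2} > 0$.) The implicit function theorem then yields
\begin{equation*}
\frac{\partial n}{\partial a} \,=\, -\frac{B_a(n)}{B'(n)} \,=\, -\frac{n}{\sigma^2 n + a}, \qquad
\frac{\partial n}{\partial \sigma} \,=\, -\frac{B_\sigma(n)}{B'(n)} \,=\, -\frac{\sigma n^2}{\sigma^2 n + a}.
\end{equation*}
Since $n>0$ and $\sigma^2 n + a > 0$, both partial derivatives are strictly negative.

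Finally, from $x^\star = c + 1/n$ one gets $\partial x^\star/\partial a = -n^{-2}\partial n/\partial a > 0$ and analogously $\partial x^\star/\partial \sigma > 0$. This proves the claim. There is no real obstacle here: the only point requiring a line of justification is the sign $B'(n)>0$, which is immediate from the convexity of $B$ and the fact that $n$ is the larger of its two roots.
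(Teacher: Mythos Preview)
Your proof is correct and takes a genuinely different route from the paper. The paper differentiates the closed-form expression for $n(a,\sigma)$ directly, obtaining somewhat unwieldy formulas for $n_a$ and $n_\sigma$, and then establishes their signs via a case distinction on $\mathrm{sign}(a)$ together with the elementary inequalities $\sqrt{(a/\sigma^2)^2 + 2\rho/\sigma^2} \geq a/\sigma^2$ and (for $a>0$) $\sqrt{(a/\sigma^2)^2 + 2\rho/\sigma^2} \leq a/\sigma^2 + \rho/a$. Your implicit-differentiation argument on $B(n)=0$ is cleaner: it bypasses the case analysis entirely, and the only nontrivial sign check, $B'(n)=\sigma^2 n + a = \sqrt{a^2+2\sigma^2\rho}>0$, is immediate. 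The two approaches are logically equivalent, but yours is shorter and more transparent; the paper's explicit computation has the minor advantage of giving concrete formulas for the derivatives, though these are not used anywhere else.
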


\begin{proof}
We look at the parameter $n$ of \eqref{eq47} as a function of $a$ and $\sigma$; that is, we set
$$n(a,\sigma):=-\frac{a}{\sigma^2}+\sqrt{\Big(\frac{a}{\sigma^2}\Big)^2+2\frac{\rho}{\sigma^2}}.$$ 
Then, it is not hard to find by direct calculations that
\begin{equation}
\label{eq67}
n_a(a,\sigma) =\frac{1}{\sigma^4}\left(\frac{a}{\sqrt{\Big(\frac{a}{\sigma^2}\Big)^2+2\frac{\rho}{\sigma^2}}}-\sigma^2\right),
\end{equation}
and
\begin{equation}
\label{eq68}
n_{\sigma}(a,\sigma) =\frac{2}{\sigma^3}\left(a-\frac{\frac{a^2}{\sigma^2}+\rho}{{\sqrt{\Big(\frac{a}{\sigma^2}\Big)^2+2\frac{\rho}{\sigma^2}}}}\right).
\end{equation} 

Clearly, if $a\leq 0$ one has $n_a \leq 0$ and $n_{\sigma} \leq 0$. Then, suppose $a>0$ and notice that
\begin{equation}
\label{ineq-n}
\sqrt{\Big(\frac{a}{\sigma^2}\Big)^2+2\frac{\rho}{\sigma^2}}\geq\frac{a}{\sigma^2} \quad \text{and}\quad \sqrt{\Big(\frac{a}{\sigma^2}\Big)^2+2\frac{\rho}{\sigma^2}}\leq\frac{a}{\sigma^2}+\frac{\rho}{a},
\end{equation}
where the second inequality above follows by an application of the binomial formula. By using the first inequality of \eqref{ineq-n} in \eqref{eq67}, and the second inequality of \eqref{ineq-n} in \eqref{eq68}, one easily finds that $n_{a}(a,\sigma) \leq 0$, as well as $n_{\sigma}(a,\sigma) \leq 0$.

Finally, the claim follows since $x^\star$ is decreasing with respect to $n$ (cf.\ \eqref{eq47}). 
\end{proof}

\begin{proposition}
\label{propV}
The value function $V$ defined in \eqref{ValueFnc} is increasing with respect to $a$ and $\sigma$. 
\end{proposition}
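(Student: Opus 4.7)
The plan is to compare value functions for different parameter values via an envelope-style application of (the proof of) Theorem~\ref{VerificationTheorem}. Inspection of \emph{Step~1} of that proof reveals that its conclusion $V\leq w$ only uses that $w\in C^{2,1}(\mathbb{R}\times[0,\infty))$ is nonnegative, increasing in $y$, satisfies $w(x,0)=0$ together with the growth bound \eqref{eq15}, and is a classical \emph{supersolution}, i.e.\
$$\mathcal{L}w-\rho w\leq 0, \qquad -\alpha w_x-w_y+x-c\leq 0.$$
Accordingly, I will exhibit, for $a_1\leq a_2$ (respectively $\sigma_1\leq\sigma_2$), the value function associated with the larger parameter as such a supersolution of the HJB equation associated with the smaller one.

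First I would record two pointwise properties of $V=V^{a,\sigma}$ that are read off directly from the explicit formula \eqref{CaValueABM}: on each of the three subregions $\mathbb{W}$, $\mathbb{S}_1$, $\mathbb{S}_2$ a one-line computation gives $V_x\geq 0$ and $V_{xx}\geq 0$, and by the $C^{2,1}$-regularity proved in the previous proposition these inequalities extend to the whole half-plane. In particular, $V$ is convex and nondecreasing in $x$.

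Writing $V^{a}:=V^{a,\sigma}$ with $\sigma>0$ fixed and taking $a_1\leq a_2$, from $V^{a_2}_x\geq 0$ together with the HJB equation for $a_2$ I would deduce
$$\frac{\sigma^2}{2}V^{a_2}_{xx}+a_1 V^{a_2}_x-\rho V^{a_2}=\Big(\frac{\sigma^2}{2}V^{a_2}_{xx}+a_2 V^{a_2}_x-\rho V^{a_2}\Big)+(a_1-a_2)V^{a_2}_x\leq 0,$$
while the gradient constraint $-\alpha V^{a_2}_x-V^{a_2}_y+x-c\leq 0$ is parameter-independent and hence inherited from the HJB equation for $a_2$. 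The remaining prerequisites of Theorem~\ref{VerificationTheorem} are provided by Proposition~\ref{GrowthV} applied to $V^{a_2}$, so \emph{Step~1} of its proof, applied now with parameter $a_1$, yields $V^{a_1}\leq V^{a_2}$. Monotonicity in $\sigma$ follows from the identical template: for $\sigma_1\leq\sigma_2$, the convexity $V^{\sigma_2}_{xx}\geq 0$ gives $\tfrac{\sigma_1^2}{2}V^{\sigma_2}_{xx}+aV^{\sigma_2}_x-\rho V^{\sigma_2}\leq \tfrac{\sigma_2^2}{2}V^{\sigma_2}_{xx}+aV^{\sigma_2}_x-\rho V^{\sigma_2}\leq 0$, and the gradient constraint does not involve $\sigma$.

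The only mildly technical step is the pointwise verification $V_{xx}\geq 0$, but this is immediate from the explicit formula \eqref{CaValueABM}, so no genuine obstacle is anticipated. (As an aside, monotonicity in $a$ also admits a purely pathwise proof, since $X_t^{x,\xi;a}$ is increasing in $a$ for every fixed $\xi$ and $W$, whence $\mathcal{J}^{a}(x,y,\xi)$ is increasing in $a$; but this pathwise route is unavailable for $\sigma$, and the supersolution approach treats both parameters uniformly.)
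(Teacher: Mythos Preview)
Your proof is correct. For the $\sigma$-monotonicity it coincides with the paper's argument: both exploit the convexity $V_{xx}\geq 0$ (read off from \eqref{eq70}, \eqref{eq60}, \eqref{eq72}) to show that the value function for the larger volatility is a supersolution of the HJB for the smaller one, and then invoke \emph{Step~1} of Theorem~\ref{VerificationTheorem}. For the $a$-monotonicity the paper instead takes the pathwise route you mention in your aside: since $X^{x,\xi}_t$ is a.s.\ increasing in $a$ for fixed $\xi$ and $W$, so is $\mathcal{J}(x,y,\xi)$, and the result follows by taking suprema. Your supersolution argument for $a$ (using $V_x\geq 0$) is equally valid and has the advantage of treating both parameters by a single template; the paper's choice is marginally shorter for $a$ alone since it bypasses the HJB machinery, but at the cost of handling the two parameters by two different mechanisms.
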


\begin{proof}
Let $\hat{a}>a$ and $\hat{\sigma}>\sigma$. We show the monotonicity with respect to $a$ and $\sigma$ separately in two steps.
\vspace{0.25cm}

\emph{Step 1.} Let $(x,y)\in \mathbb{R}\times (0,\infty)$ be given and fixed. For any $\xi\in\mathcal{A}(y)$, we denote by $\widehat{X}_t^{x,\xi}$ the solution to \eqref{affectedX} when $b=0$ and the drift is $\hat{a}$. One clearly has $\widehat{X}_t^{x,\xi}\geq X_t^{x,\xi}$ $\mathbb{P}$-a.s.\ for any $t\geq 0$. Therefore $\widehat{\mathcal{J}}(x,y,\xi)\geq\mathcal{J}(x,y,\xi)$ for any $\xi\in\mathcal{A}(y)$, where $\hat{\mathcal{J}}$ is given by \eqref{PC} with underlying state $(\widehat{X}^{x,\xi},Y^{y,\xi})$. Hence, we conclude 
	\begin{align*}
	\widehat{V}(x,y)\geq V(x,y),\quad \forall(x,y)\in\mathbb{R}\times [0,\infty),
	\end{align*}
	where $\widehat{V}(x,y):=\sup_{\xi \in \mathcal{A}(y)}\hat{\mathcal{J}}(x,y,\xi)$.
\vspace{0.25cm}
	
\emph{Step 2.} To prove the monotonicity of $V$ with respect to $\sigma$ we adapt to our setting ideas from Theorem 4 in \cite{Alvarez2000}. Let $\widehat{V}$ be the value function when the volatility coefficient in \eqref{affectedX} is $\hat{\sigma}$. Recall $\mathcal{L}$ as in \eqref{InfinitesOp}, and let $\widehat{\mathcal{L}}$ be as in \eqref{InfinitesOp} but with volatility coefficient $\widehat{\sigma}$. Then, for all $(x,y)\in\mathbb{R}\times (0,\infty)$ we have

\begin{align}
\label{cond10}
\begin{split}
\mathcal{L}\widehat{V}(x,y)-\rho\widehat{V}(x,y)&=\frac{\hat{\sigma}^2}{2}\widehat{V}_{xx}(x,y)+a\widehat{V}_x(x,y)-\rho\widehat{V}(x,y)+\frac{(\sigma^2-\hat{\sigma}^2)}{2}\widehat{V}_{xx}(x,y)\\&=\widehat{\mathcal{L}}\widehat{V}(x,y)+\frac{(\sigma^2-\hat{\sigma}^2)}{2}\widehat{V}_{xx}(x,y) \leq \frac{(\sigma^2-\hat{\sigma}^2)}{2}\widehat{V}_{xx}(x,y)\leq 0,
\end{split}
\end{align}
since $\widehat{V}(\cdot,y)$ is convex by the second equations in \eqref{eq70} and \eqref{eq60}, and the second equation of \eqref{eq72}. Furthermore, since $\widehat{V}$ is the value function of the optimal extraction problem when in \eqref{affectedX} the volatility is $\hat{\sigma}$, $\widehat{V}$ must satisfy
	\begin{align}\label{cond11}
	-\alpha\widehat{V}_x(x,y)-\widehat{V}_y(x,y)+(x-c)\leq 0,
	\end{align}
for all $(x,y)\in \mathbb{R}\times (0,\infty)$, and $\widehat{V}(x,0)=0$ for all $x \in \mathbb{R}$. Now, arguing as in the first step of the proof of Theorem \ref{VerificationTheorem}, by using \eqref{cond10} and \eqref{cond11}, we obtain $\widehat{V} \geq V$, and thus the claimed monotonicity.
\end{proof}

Propositions \ref{propx0} and \ref{propV} show that the higher the level of the drift $a$ is, and hence the higher the expected prices are, the later the company starts extracting in order to obtain larger profits. Moreover, higher uncertainty, and hence larger price's fluctuations, are exploited by the company that then sells the commodity at higher prices and increases the resulting profits.


\subsection{Sensitivity Analysis in the Case of an Ornstein-Uhlenbeck Fundamental Price}
\label{comp:OU}

We start by studying the sensitivity of $x_0$ and $x_\infty$ (cf.\ Lemma \ref{xinfty}) on the model parameters $a$ and $\sigma$. In the following, when needed, we write $g(\cdot;a,\sigma)$ in order to emphasize the dependency of a given real-valued function $g$ with respect to $a$ and $\sigma$. 

Recall that the fundamental increasing solution to the equation $(\cL - \rho)u=0$ is given by \eqref{eq34} (see also \eqref{eq35}). In the following, when needed, we denote by $\psi^{(k)}(x;a,\sigma)$ the $k-$th derivative with respect to $x$ of $\psi$. By an application of the dominated convergence theorem one obtains the relation
\begin{align}\label{eq8}
\frac{\partial \psi^{(k)}}{\partial a}(x;a,\sigma):=\psi^{(k)}_a(x;a,\sigma)&=-\frac{1}{b}\psi^{(k+1)}(x;a,\sigma),\quad\text{for all }k\in\mathbb{N} \cup \{0\}.
\end{align}
Analogously, one finds
\begin{align}\label{eq10}
\frac{\partial \psi^{(k)}}{\partial \sigma}(x;a,\sigma):=\psi^{(k)}_{\sigma}(x;a,\sigma)&=\bigg(\frac{a-bx}{b\sigma}\bigg)\psi^{(k+1)}(x;a,\sigma)-\frac{k}{\sigma}\psi^{(k)}(x;a,\sigma),
\end{align}
for all $k\in\mathbb{N} \cup \{0\}$

By employing \eqref{eq8}, and Lemma \ref{Properties}, one can easily prove the next result.
\begin{lemma}
\label{lem:a}
One has that
\begin{align}
\label{eqpartialpsia}
\frac{\partial(\psi^{(k)}(x;a,\sigma)/\psi^{(k+1)}(x;a,\sigma))}{\partial a}=\frac{\psi^{(k)}(x;a,\sigma)\psi^{(k+2)}(x;a,\sigma)-\psi^{(k+1)}(x;a,\sigma)^2}{b\psi^{(k+1)}(x;a,\sigma)^2}>0,
\end{align}
\end{lemma}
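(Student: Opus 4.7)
The claim is a direct consequence of the quotient rule together with identity \eqref{eq8} and Lemma \ref{Properties}-(3). Specifically, the plan is to differentiate the ratio $\psi^{(k)}/\psi^{(k+1)}$ with respect to $a$, replace each $a$-derivative via \eqref{eq8}, simplify, and finally appeal to the Wronskian-type inequality $\psi^{(k)}\psi^{(k+2)}-(\psi^{(k+1)})^2>0$ from Lemma \ref{Properties}-(3).

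In detail, I would first write
\begin{equation*}
\frac{\partial}{\partial a}\!\left(\frac{\psi^{(k)}(x;a,\sigma)}{\psi^{(k+1)}(x;a,\sigma)}\right)
=\frac{\psi^{(k)}_a(x;a,\sigma)\,\psi^{(k+1)}(x;a,\sigma)-\psi^{(k)}(x;a,\sigma)\,\psi^{(k+1)}_a(x;a,\sigma)}{\psi^{(k+1)}(x;a,\sigma)^2}.
\end{equation*}
Applying \eqref{eq8} with $k$ and with $k+1$ gives $\psi^{(k)}_a=-\psi^{(k+1)}/b$ and $\psi^{(k+1)}_a=-\psi^{(k+2)}/b$, so the numerator reduces to
\begin{equation*}
-\tfrac{1}{b}\psi^{(k+1)}(x;a,\sigma)^2+\tfrac{1}{b}\psi^{(k)}(x;a,\sigma)\,\psi^{(k+2)}(x;a,\sigma),
\end{equation*}
which, upon dividing by $\psi^{(k+1)}(x;a,\sigma)^2$, yields exactly the claimed identity.

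For the strict positivity, it suffices to note that the denominator $b\,\psi^{(k+1)}(x;a,\sigma)^2$ is positive (since $b>0$ and $\psi^{(k+1)}>0$ by Lemma \ref{Properties}-(2)), while the numerator $\psi^{(k)}(x;a,\sigma)\,\psi^{(k+2)}(x;a,\sigma)-\psi^{(k+1)}(x;a,\sigma)^2$ is strictly positive by Lemma \ref{Properties}-(3). No obstacle is expected: the entire argument is a one-line application of the quotient rule, with all substantive content already encapsulated in \eqref{eq8} and Lemma \ref{Properties}, which justifies in particular the interchange of differentiation and the integral representation of $\psi$.
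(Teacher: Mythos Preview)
Your argument is correct and is exactly the approach the paper has in mind: it states that the result follows ``by employing \eqref{eq8}, and Lemma \ref{Properties}'', which is precisely the quotient-rule computation you carry out together with the positivity from Lemma \ref{Properties}-(3).
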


The proof of the next result can be found in Appendix \ref{App:A}. It employs \eqref{eq10}.
\begin{lemma}\label{sigma}
	One has that
	\begin{align}
	\label{eq38}	
	&\frac{\partial(\psi^{(k)}(x;a,\sigma)/\psi^{(k+1)}(x;a,\sigma))}{\partial\sigma} \\
	& =\frac{(a-bx)[\psi^{(k+1)}(x;a,\sigma)^2-\psi^{(k)}(x;a,\sigma)\psi^{(k+2)}(x;a,\sigma)]+b\psi^{(k+1)}(x;a,\sigma)\psi^{(k)}(x;a,\sigma)}{b\sigma\psi^{(k+1)}(x;a,\sigma)^2}>0. \nonumber	
	\end{align}
\end{lemma}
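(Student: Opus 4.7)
My plan is to prove Lemma \ref{sigma} in three stages: first derive the displayed identity for the $\sigma$-derivative of the ratio, then rewrite the resulting numerator in a cleaner form using the ODE that $\psi^{(k)}$ solves, and finally deduce strict positivity from Lemma \ref{Properties}-(3) by a Cauchy--Schwarz-type argument.

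\emph{Step 1 (the identity).} I would apply the quotient rule to $\psi^{(k)}/\psi^{(k+1)}$ and substitute \eqref{eq10} both at order $k$ and at order $k+1$. The terms involving $-k/\sigma$ and $-(k+1)/\sigma$ produce the clean contribution $\frac{1}{\sigma}\psi^{(k)}\psi^{(k+1)}$ in the numerator, while the $(a-bx)/(b\sigma)$ terms combine to give $\frac{a-bx}{b\sigma}[(\psi^{(k+1)})^2-\psi^{(k)}\psi^{(k+2)}]$. Putting everything over the common denominator $b\sigma(\psi^{(k+1)})^2$ yields exactly \eqref{eq38}. This part is completely mechanical; no cleverness required.

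\emph{Step 2 (reduction of the numerator).} Set
\[
P(x):=(a-bx)\bigl[(\psi^{(k+1)})^2-\psi^{(k)}\psi^{(k+2)}\bigr]+b\,\psi^{(k)}\psi^{(k+1)}.
\]
The key observation is that by Lemma \ref{Properties}-(2), the functions $\psi^{(k)}$ and $\psi^{(k+1)}$ solve the ODEs $(\cL-(\rho+kb))u=0$ and $(\cL-(\rho+(k+1)b))u=0$ respectively, which rearrange to
\[
(a-bx)\psi^{(k+1)}=(\rho+kb)\psi^{(k)}-\tfrac{\sigma^2}{2}\psi^{(k+2)},\qquad (a-bx)\psi^{(k+2)}=(\rho+(k+1)b)\psi^{(k+1)}-\tfrac{\sigma^2}{2}\psi^{(k+3)}.
\]
Multiplying the first by $\psi^{(k+1)}$ and the second by $\psi^{(k)}$ and substituting into $P(x)$, the $(\rho+kb)\psi^{(k)}\psi^{(k+1)}$ and $-(\rho+(k+1)b)\psi^{(k)}\psi^{(k+1)}$ terms combine to $-b\,\psi^{(k)}\psi^{(k+1)}$, which cancels the stand-alone $+b\,\psi^{(k)}\psi^{(k+1)}$, leaving
\[
P(x)=\frac{\sigma^2}{2}\bigl[\psi^{(k)}(x)\psi^{(k+3)}(x)-\psi^{(k+1)}(x)\psi^{(k+2)}(x)\bigr].
\]

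\emph{Step 3 (positivity via Cauchy--Schwarz).} The remaining inequality $\psi^{(k)}\psi^{(k+3)}>\psi^{(k+1)}\psi^{(k+2)}$ is the one I expect to be the real (mild) obstacle, and the trick is to apply Lemma \ref{Properties}-(3) \emph{twice}: once at order $k$, giving $\psi^{(k)}\psi^{(k+2)}>(\psi^{(k+1)})^2>0$, and once at order $k+1$, giving $\psi^{(k+1)}\psi^{(k+3)}>(\psi^{(k+2)})^2>0$. Multiplying these two strict positive inequalities and taking square roots (equivalently, dividing both sides by $\psi^{(k+1)}\psi^{(k+2)}>0$) produces
\[
\psi^{(k)}(x)\psi^{(k+3)}(x)>\psi^{(k+1)}(x)\psi^{(k+2)}(x),\qquad x\in\R,
\]
so that $P(x)>0$ and, upon dividing by $b\sigma(\psi^{(k+1)})^2>0$, the strict positivity asserted in \eqref{eq38} follows. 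The main obstacle is really only the recognition that the apparently indefinite-in-sign expression $P$ collapses, via the two ODEs, to a difference of products of consecutive derivatives that is controlled by the Turán-type inequality of Lemma \ref{Properties}-(3); once that collapse is spotted, everything else is bookkeeping.
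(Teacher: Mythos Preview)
Your proof is correct and uses the same basic ingredients as the paper's argument (the identity \eqref{eq10}, the ODEs for $\psi^{(k)}$ and $\psi^{(k+1)}$ from Lemma \ref{Properties}-(2), and the Tur\'an-type inequality of Lemma \ref{Properties}-(3) at two consecutive orders), but the organisation is genuinely different and, in fact, a bit cleaner. The paper substitutes only the level-$k$ ODE for $(a-bx)$, \emph{drops} the resulting positive term $\tfrac{\sigma^2\psi^{(k+2)}}{2\psi^{(k+1)}}[\psi^{(k)}\psi^{(k+2)}-(\psi^{(k+1)})^2]$, and then feeds the remainder through the level-$(k+1)$ ODE (and once more the level-$k$ ODE) to reduce it to $\tfrac{\sigma^2}{2}\tfrac{\psi^{(k)}}{\psi^{(k+1)}}[\psi^{(k+3)}\psi^{(k+1)}-(\psi^{(k+2)})^2]>0$. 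In other words, the paper proceeds via a chain of inequalities. You instead substitute both ODEs simultaneously and obtain the \emph{exact} identity
\[
P(x)=\frac{\sigma^2}{2}\bigl[\psi^{(k)}\psi^{(k+3)}-\psi^{(k+1)}\psi^{(k+2)}\bigr],
\]
whose positivity you then deduce by multiplying the two Tur\'an inequalities $\psi^{(k)}\psi^{(k+2)}>(\psi^{(k+1)})^2$ and $\psi^{(k+1)}\psi^{(k+3)}>(\psi^{(k+2)})^2$ and dividing by $\psi^{(k+1)}\psi^{(k+2)}>0$. Your route has the advantage of producing a closed formula for the numerator (potentially useful elsewhere) and avoiding any discarded terms; the paper's route has the minor advantage of never needing to multiply two strict inequalities between positive quantities, but this is a trivial step in any case.
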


The previous results on the dependency of $\psi/\psi_x$ with respect to $a$ and $\sigma$ (i.e.\ \eqref{eqpartialpsia} and \eqref{eq38}) allow us to determine the dependency of $x_0$ and $x_\infty$ on $a$ and $\sigma$ as well. One may intuitively expect that the company exploits a higher mean reversion level, and thus sells the commodity at higher prices. As an indication of this, we indeed find that $x_0$, $x_\infty$, and the value function $V$ increase as $a$ increases.

In the following we denote by $x_0$, $x_{\infty}$ the unique solutions on $(c,\infty)$ to $(x-c)\psi_x(x;a,\sigma)-\psi(x;a,\sigma)=0$ and $(x-c)\psi_{xx}(x;a,\sigma)-\psi_x(x;a,\sigma)=0$, respectively. Also, $V(x,y)$ denotes the value function when in \eqref{affectedX} the mean-reversion level is $a/b$ and the volatility is $\sigma$.

\begin{proposition}
	Let $\hat{a}>a$, and denote by $\hat{x}_0$ and $\hat{x}_\infty$ the unique solutions on $(c,\infty)$ to $(x-c)\psi_x(x;\hat{a},\sigma)-\psi(x;\hat{a},\sigma)=0$ and $(x-c)\psi_{xx}(x;\hat{a},\sigma)-\psi_x(x;\hat{a},\sigma)=0$, respectively. Furthermore, we denote by $\widehat{V}(x,y)$, $(x,y)\in \mathbb{R}\times[0,\infty)$, the value function when in \eqref{affectedX} the mean-reversion level is $\hat{a}/b$ and the volatility is $\sigma$.
	We have 
	\begin{align*}
	\hat{x}_0>x_0\quad \text{and} \quad \hat{x}_\infty>x_\infty,
	\end{align*}
and
	\begin{align}\label{staement1}
	\widehat{V}(x,y)\geq V(x,y),\quad \forall(x,y)\in\mathbb{R}\times [0,\infty).
	\end{align}
\end{proposition}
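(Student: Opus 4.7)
The plan is to handle the two free-boundary inequalities and the value-function inequality by separate arguments, both of which reduce to one-line observations once the existing technology is in place.

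For $\hat x_0>x_0$ and $\hat x_\infty>x_\infty$, I would interpret $x_0$ and $x_\infty$, via Lemma \ref{UniqueSol}, as the unique points in $(c,\infty)$ satisfying $x-c=\psi(x;a,\sigma)/\psi_x(x;a,\sigma)$ and $x-c=\psi_x(x;a,\sigma)/\psi_{xx}(x;a,\sigma)$, respectively. By Corollary \ref{Order}, the function $h_0(x;a):=(x-c)\psi_x(x;a,\sigma)-\psi(x;a,\sigma)$ is strictly negative on $(c,x_0)$, and $h_0(x_0;a)=0$ by definition. Applying Lemma \ref{lem:a} with $k=0$ at the point $x_0$ gives
\begin{equation*}
\frac{\psi(x_0;\hat a,\sigma)}{\psi_x(x_0;\hat a,\sigma)}>\frac{\psi(x_0;a,\sigma)}{\psi_x(x_0;a,\sigma)}=x_0-c,
\end{equation*}
so that $h_0(x_0;\hat a)<0$. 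Corollary \ref{Order} applied with parameter $\hat a$ identifies $\{h_0(\cdot;\hat a)<0\}\cap(c,\infty)$ with $(c,\hat x_0)$, and therefore $x_0<\hat x_0$. The argument for $\hat x_\infty>x_\infty$ is identical, replacing $k=0$ by $k=1$ and $h_0$ by $h_1(x;a):=(x-c)\psi_{xx}(x;a,\sigma)-\psi_x(x;a,\sigma)$, and invoking the second part of Corollary \ref{Order} together with Lemma \ref{lem:a} for $k=1$.

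For \eqref{staement1}, I would follow the strategy of Step 1 in the proof of Proposition \ref{propV}. Fix $(x,y)\in\mathbb R\times[0,\infty)$ and $\xi\in\mathcal A(y)$, and let $X^{x,\xi;a}$ and $X^{x,\xi;\hat a}$ denote the controlled prices under mean-reversion levels $a/b$ and $\hat a/b$. Their difference $D_t:=X^{x,\xi;\hat a}_t-X^{x,\xi;a}_t$ satisfies the \emph{deterministic} linear ODE $\dot D_t=(\hat a-a)-bD_t$ with $D_{0-}=0$, since the Brownian noise and the impact term $-\alpha\,d\xi_t$ cancel out. Hence $D_t=\frac{\hat a-a}{b}(1-e^{-bt})\geq 0$ for every $t\geq 0$, $\mathbb P$-a.s., and in particular also $X^{x,\xi;\hat a}_{t-}\geq X^{x,\xi;a}_{t-}$ for every $t>0$ by continuity of $D$. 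Since $\xi$ is increasing, so that $d\xi^c$ is a nonnegative measure and $\Delta\xi_t\geq 0$, and since the quadratic correction $-\tfrac{1}{2}\alpha(\Delta\xi_t)^2$ in \eqref{PC} does not depend on $a$, a term-by-term comparison yields $\widehat{\mathcal J}(x,y,\xi)\geq\mathcal J(x,y,\xi)$. Taking the supremum over the common set $\mathcal A(y)$ gives \eqref{staement1}.

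No real obstacle is anticipated: Lemma \ref{lem:a} already encodes the monotonicity of $\psi^{(k)}/\psi^{(k+1)}$ in $a$ that is needed to shift the implicit equations defining $x_0$ and $x_\infty$ to the right, while the pathwise comparison for the value function transfers verbatim from the Brownian case, the only difference being that $D_t$ is no longer constant in $t$ but is still nonnegative because $\hat a-a>0$ and $b>0$. The only small point worth spelling out is that the sign information in Corollary \ref{Order}, combined with the uniqueness statement of Lemma \ref{UniqueSol}, really does force $\{h_0(\cdot;\hat a)<0\}\cap(c,\infty)=(c,\hat x_0)$, which is what turns the inequality $h_0(x_0;\hat a)<0$ into the strict ordering $x_0<\hat x_0$.
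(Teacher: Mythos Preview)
Your proof is correct and follows essentially the same approach as the paper: both arguments pivot on Lemma~\ref{lem:a} to compare $\psi^{(k)}/\psi^{(k+1)}$ at the two parameter values and then use the monotonicity of $(x-c)\psi^{(k+1)}(x)-\psi^{(k)}(x)$ on $(c,\infty)$, while the value-function inequality is obtained via the same pathwise comparison as in Step~1 of Proposition~\ref{propV}. The only cosmetic difference is that the paper evaluates the defining function at $\hat x_0$ with parameter $a$ (and uses $H_x>0$ directly), whereas you evaluate at $x_0$ with parameter $\hat a$ and invoke Corollary~\ref{Order} together with uniqueness; and you spell out the explicit formula $D_t=\tfrac{\hat a-a}{b}(1-e^{-bt})$ for the OU difference, which the paper leaves implicit.
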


\begin{proof}
For any given $q \in \mathbb{R}$ and $\sigma>0$, set $H(x;q,\sigma):=(x-c)\psi_x(x;q,\sigma)-\psi(x;q,\sigma)$, $x\in \mathbb{R}$. We have $H_x(x;q,\sigma)>0$ for all $x>c$. Moreover,
 \begin{align*}
 H(\hat{x}_0;a,\sigma)=\frac{{\psi}(\hat{x}_0;\hat{a},\sigma)}{{\psi}_x(\hat{x}_0;\hat{a},\sigma)}\psi_x(\hat{x}_0;a,\sigma)-\psi(\hat{x}_0;a,\sigma)>0 = H(x_0;a,\sigma),
 \end{align*}
where we have used that $H(\hat{x}_0;\hat{a},\sigma) =0$ for the first equality, and Lemma \ref{lem:a} with $k=0$ for the inequality. Thus, by monotonicity of $H(\cdot;q,\sigma)$ on $(c,\infty)$, we have $\hat{x}_0>x_0$. Analogously, we can prove that $\hat{x}_\infty>x_\infty$ by employing Lemma \ref{lem:a} with $k=1$.

In order to prove \eqref{staement1}, we can proceed in the same way as in \emph{Step 1} of the proof of Proposition \ref{propV}.
\end{proof}

The next proposition shows that the critical price levels $x_0$ and $x_\infty$ increase as the price's fluctuations become larger.
\begin{proposition}
\label{CSsigma}
	Let $\hat{\sigma}>\sigma$, and denote by $\hat{x}_0$ and $\hat{x}_\infty$ the unique solutions on $(c,\infty)$ to $(x-c)\psi_x(x;a,\hat{\sigma})-\psi(x;a,\hat{\sigma})=0$ and $(x-c)\psi_{xx}(x;a,\hat{\sigma})-\psi_x(x;a,\hat{\sigma})=0$, respectively. Furthermore, denote by $\widehat{V}$ the value function when in \eqref{affectedX} the mean-reversion level is $a/b$ and the volatility is $\hat{\sigma}$. We have 
	\begin{align*}
	\hat{x}_0>x_0\quad \text{ and} \quad \hat{x}_\infty>x_\infty,
	\end{align*}
	and 
	\begin{align}\label{statement2}
	\widehat{V}(x,y)\geq V(x,y),\quad \forall(x,y)\in\mathbb{R}\times\mathbb{R}_+.
	\end{align}
\end{proposition}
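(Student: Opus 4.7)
The monotonicity of $x_0$ and $x_\infty$ with respect to $\sigma$ follows by the exact same strategy used in the preceding proposition for the sensitivity in $a$, simply replacing Lemma \ref{lem:a} with Lemma \ref{sigma}. Concretely, setting $H(x;a,\sigma) := (x-c)\psi_x(x;a,\sigma) - \psi(x;a,\sigma)$, we have $H_x(\,\cdot\,;a,\sigma) > 0$ on $(c,\infty)$ and
\begin{align*}
H(\hat x_0;a,\sigma) &= \frac{\psi(\hat x_0;a,\hat\sigma)}{\psi_x(\hat x_0;a,\hat\sigma)}\,\psi_x(\hat x_0;a,\sigma) - \psi(\hat x_0;a,\sigma) \\
&= \left[\frac{\psi(\hat x_0;a,\hat\sigma)}{\psi_x(\hat x_0;a,\hat\sigma)} - \frac{\psi(\hat x_0;a,\sigma)}{\psi_x(\hat x_0;a,\sigma)}\right]\psi_x(\hat x_0;a,\sigma) > 0 = H(x_0;a,\sigma),
\end{align*}
using $H(\hat x_0;a,\hat\sigma) = 0$ for the first equality and Lemma \ref{sigma} with $k=0$ for the strict inequality. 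Monotonicity of $H(\,\cdot\,;a,\sigma)$ in $x$ on $(c,\infty)$ then yields $\hat x_0 > x_0$. The inequality $\hat x_\infty > x_\infty$ is proved identically upon taking $k=1$ in Lemma \ref{sigma}.

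For \eqref{statement2} I intend to mimic \emph{Step 2} of the proof of Proposition \ref{propV}. The plan is to verify that $\widehat V$ is a $C^{2,1}(\mathbb{R}\times [0,\infty))$-supersolution of the HJB equation \eqref{HJB1} associated with volatility $\sigma$ (not $\hat\sigma$), and then to apply the argument of \emph{Step 1} of Theorem \ref{VerificationTheorem}, which only requires the supersolution inequalities $\mathcal{L}w-\rho w\leq 0$ and $-\alpha w_x-w_y+x-c\leq 0$ (together with nonnegativity, monotonicity in $y$, the boundary value, and the growth bound \eqref{eq15}) to deduce $w\geq V$.

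The crucial ingredient is convexity of $\widehat V$ in $x$. This is immediate from the explicit expression \eqref{CaFnc} and Lemma \ref{Smooth}: we have $\widehat V_{xx}(x,y) = A(y)\psi''(x)\geq 0$ on $\mathbb{W}$ by \eqref{DerWait}, $\widehat V_{xx}(x,y) = A(F(x-\alpha z))\psi''(x-\alpha z)\geq 0$ on $\mathbb{S}_2$ by \eqref{eq25}, and $\widehat V_{xx}(x,y)=0$ on $\mathbb{S}_1$ by \eqref{DerS1}, with nonnegativity ensured by $A\geq 0$ and by the strict convexity of $\psi$ from Lemma \ref{Properties}-(1). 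Denoting by $\widehat{\mathcal L}$ the infinitesimal generator associated with volatility $\hat\sigma$, one then writes
$$\mathcal{L}\widehat V(x,y)-\rho\widehat V(x,y) = \widehat{\mathcal L}\widehat V(x,y)-\rho \widehat V(x,y) + \tfrac{1}{2}(\sigma^2-\hat\sigma^2)\widehat V_{xx}(x,y) \leq 0,$$
since $\widehat{\mathcal L}\widehat V-\rho\widehat V\leq 0$ by Proposition \ref{HJB} and $\sigma^2-\hat\sigma^2<0$ together with $\widehat V_{xx}\geq 0$. The gradient constraint $-\alpha\widehat V_x-\widehat V_y+(x-c)\leq 0$ and the boundary condition $\widehat V(x,0)=0$ are inherited from $\widehat V$ and are independent of $\sigma$, while the growth bound \eqref{eq15}, the monotonicity of $\widehat V$ in $y$, and the $C^{2,1}$-regularity were all established in the proof of Theorem \ref{optsolOU}. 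Hence the verification argument gives $\widehat V\geq V$ on $\mathbb{R}\times [0,\infty)$. No substantial new obstacle arises beyond guaranteeing convexity of $\widehat V$ in $x$, which is the only place where the $\sigma$-dependence enters the comparison.
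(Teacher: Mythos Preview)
Your proof is correct and follows essentially the same approach as the paper's: the monotonicity of $\hat x_0$ and $\hat x_\infty$ is obtained via the same function $H$ and Lemma \ref{sigma}, and the inequality $\widehat V\ge V$ is derived exactly as in \emph{Step 2} of Proposition \ref{propV} by exploiting convexity of $\widehat V(\cdot,y)$ (established from \eqref{DerWait}, \eqref{DerS1}, \eqref{eq25}) together with the supersolution argument of Theorem \ref{VerificationTheorem}. The only difference is that you spell out a few more details than the paper does.
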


\begin{proof}
For any given $q >0$ and $a \in \mathbb{R}$, set $H(x;a,q):=(x-c)\psi_x(x;a,q)-\psi(x;a,q)$, $x\in \mathbb{R}$. We have $H_x(x;a,q)>0$ for all $x>c$. Moreover, using that $H(\hat{x}_0;a,\hat{\sigma})=0$ we have 
 \begin{align*}
 H(\hat{x}_0;a,\sigma)=\frac{{\psi}(\hat{x}_0;a,\hat{\sigma})}{{\psi}_x(\hat{x}_0;a,\hat{\sigma})}\psi_x(\hat{x}_0;a,\sigma)-\psi(\hat{x}_0;a,\sigma)>0 = H(x_0;a,\sigma),
 \end{align*}
where the inequality is due to Lemma \ref{sigma} with $k=0$. Since $H(\cdot;a,q)$ is increasing for all $x>c$ we have $\hat{x}_0>x_0$. Analogously, we can prove that $\hat{x}_\infty>x_\infty$ by Lemma \ref{sigma} with $k=1$.

To prove \eqref{statement2} we can use the arguments employed in \emph{Step 2} of the proof of Proposition \ref{propV}, upon noticing that $\widehat{V}(\cdot,y)$ is convex by the second equations in \eqref{DerWait} and \eqref{DerS1}, and \eqref{eq25} (recall that $A$ is positive and $\psi$ is convex).
\end{proof}

The semi-explicit nature of our results allows us to easily study numerically the dependency of the free boundary $F$ with respect to $a$. This is shown in Figure \ref{fig3}. We see that $F$ increases as $a$ increases: the higher the level of mean reversion is, the later the company starts extracting in order to obtain larger profits.

\begin{figure}
	\centering
	\includegraphics[width=0.75\textwidth]{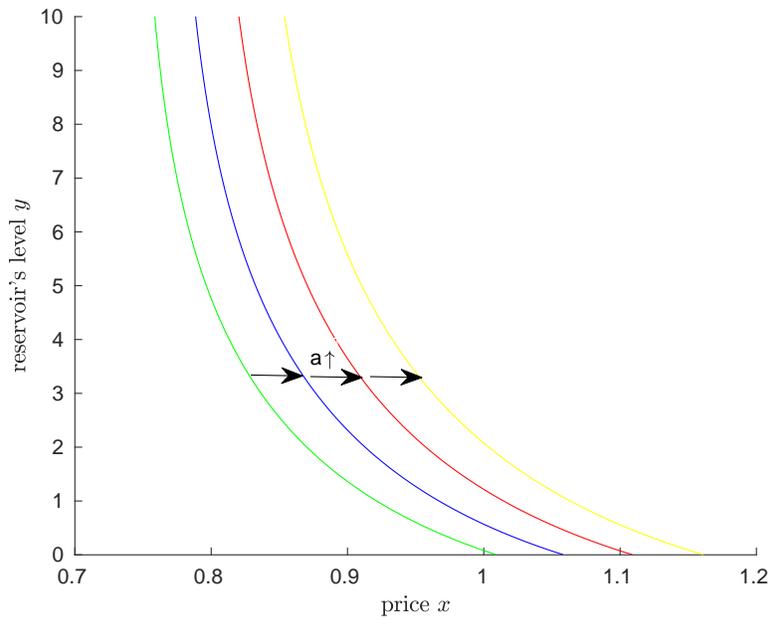}
	\caption{A drawing of the free boundary $x \mapsto F(x)$ for $b=1,\, \sigma=0.8,\, \rho=3/8,\, c=0.3,\, \alpha=0.25$ and various values for $a$: $a=0.4$ (green), $a=0.5$ (blue), $a=0.6$ (red), and $a=0.7$ (yellow).}
	\label{fig3}
\end{figure}

Figure \ref{fig2} shows the dependency of the curve $x\mapsto F(x)$ with respect to $\sigma$. We see that the whole curve $F$ increases as $\sigma$ increases. We thus conclude that higher uncertainty, and hence higher fluctuations around the mean-reversion level, are exploited by the company which then sells the commodity at higher prices and increases its profits.

\begin{figure}
\centering
\includegraphics[width=0.75\textwidth]{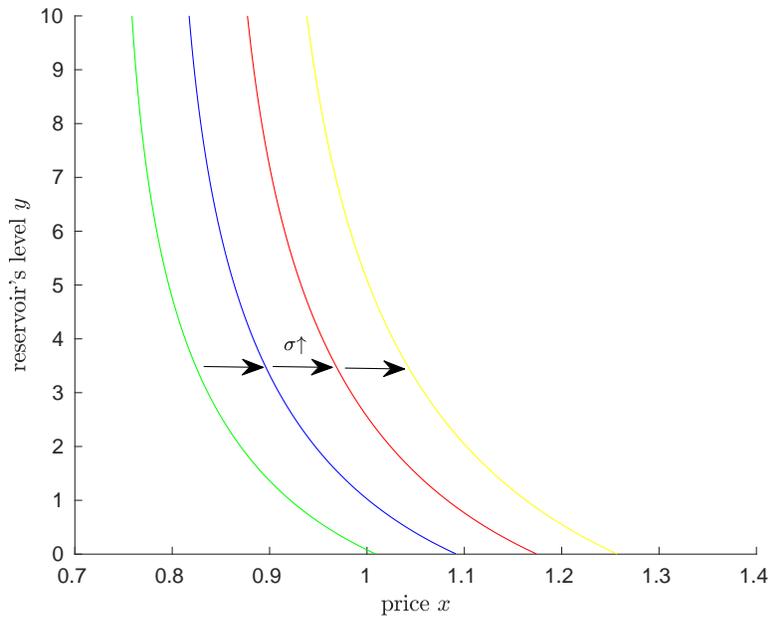}
\caption{A drawing of the free boundary $x \mapsto F(x)$ for $a=0.4,\, b=1,\, \rho=3/8,\, c=0.3,\, \alpha=0.25$ and various values for the volatility: $\sigma=0.8$ (green), $\sigma=0.9$ (blue), $\sigma=1$ (red), and $\sigma=1.1$ (yellow).}
\label{fig2}
\end{figure}

In Figure \ref{fig4}, we can observe the sensitivity of the free boundary $F$ with respect to $b$. Differently to what it is happening when increasing $\sigma$ and $a$, now the whole curve $F$ increases as $b$ decreases, and in fact, as $b \downarrow 0$, it converges to $x^\star$, which is the free boundary in the case $b=0$ (i.e.\ related to the drifted Brownian motion case). This fact might be interpreted by saying that, if $a>0$, a lower value of $b$ leads the company to wait more since it expects to be able to sell the commodity at higher prices in the future.

\begin{figure}
	\centering
	\includegraphics[width=0.75\textwidth]{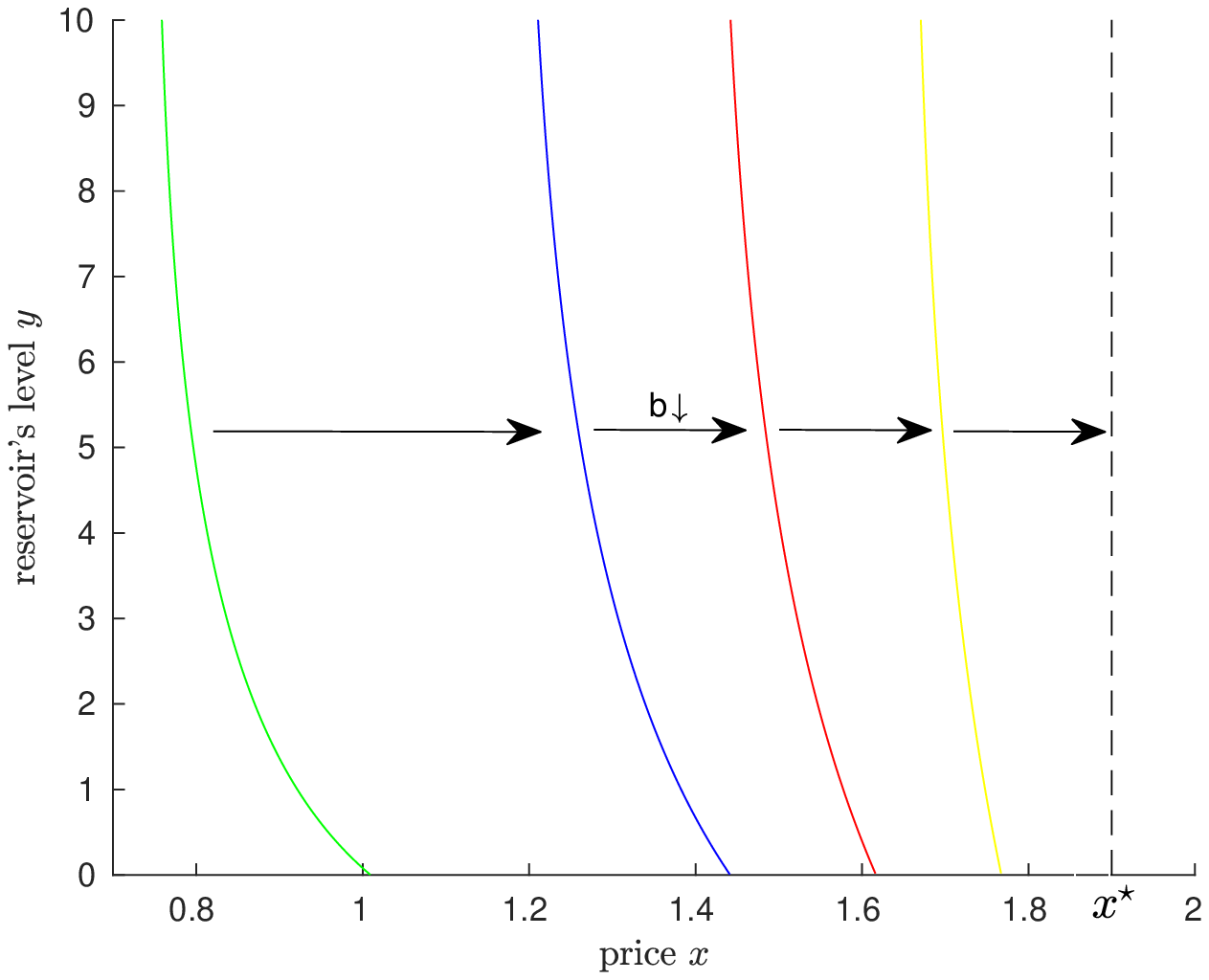}
	\caption{A drawing of the free boundary $x \mapsto F(x)$ for $a=0.4,\, \sigma=0.8,\, \rho=3/8,\, c=0.3,\, \alpha=0.25$ and various values for the mean reversion speed: $b=1$ (green), $b=0.25$ (blue), $b=0.125$ (red), and $b=0.05$ (yellow).}
	\label{fig4}
\end{figure}


\appendix
\section{Proofs of Results from Sections \ref{sec:OU} and \ref{comp:OU}}	
\label{App:A}

\renewcommand{\theequation}{A-\arabic{equation}}

\emph{Proof of Lemma \ref{Properties}.}

\begin{itemize}
\item [(1)] We refer the reader to \cite{jeanblanc}, among others. Moreover, the strict convexity of $\psi$ can be checked by direct calculations on \eqref{eq34}.

\item [(2)] 
Define the function $f:\mathbb{R}_+\times\mathbb{R}\to\mathbb{R}_+$ by 
$$f(t,x)=\frac{1}{\Gamma(\frac{\rho}{b})} t^{\big(\frac{\rho}{b}-1\big)}e^{-\frac{t^2}{2}+t\big(\frac{bx-a}{\sigma b}\big)\sqrt{2b}},$$ 
that, once differentiated with respect to $x$, yields
$$f_x(t,x)=\frac{\rho\sqrt{2b}}{b\sigma}\frac{1}{\Gamma(\frac{\rho+b}{b})} t^{\big(\frac{\rho+b}{b}-1\big)}e^{-\frac{t^2}{2}+t\big(\frac{bx-a}{\sigma b}\big)\sqrt{2b}}.$$

Notice that $f$ is the integrand appearing in \eqref{eq35} for $\beta=-\frac{\rho}{b}$. Then, differentiating \eqref{eq34} with respect to $x$, and invoking the dominated convergence theorem, we obtain
$$\psi'(x)\, \propto \,  e^{\frac{(bx-a)^2}{2\sigma^2b}}D_{-\frac{\rho+b}{b}}\bigg(-\frac{bx-a}{\sigma b}\sqrt{2b}\bigg),$$
upon noticing that $f_x(t,x)$ is the integrand of $D_{-\frac{\rho+b}{b}}\bigg(-\frac{bx-a}{\sigma b}\sqrt{2b}\bigg)$ (cf.\ \eqref{eq35}).

Hence, $\psi'$ can be identified (modulo a constant) as the positive strictly increasing fundamental solution to $(\cL-(\rho+b))u=0$, and by direct calculations it can be checked that it is strictly convex. By iterating the previous argument, we see that, for any $k\in\mathbb{N}$, the function $\psi^{(k)}$ is strictly convex and identifies with the positive strictly increasing fundamental solution to $(\cL-(\rho+kb))u=0$.

\item [(3)] We define the function $f^{(k)}:\mathbb{R}_+\times\mathbb{R}\to\mathbb{R}_+$ by $$f^{(k)}(t,x)=\frac{\big(\sqrt{{2b}}/{\sigma}\big)^\frac{k}{2}}{\Gamma(\frac{\rho}{b})^{\frac{1}{2}}}t^{\frac{1}{2}\big(\frac{\rho}{b}+k-1\big)}e^{-\frac{t^2}{4}+\frac{t}{2}\big(\frac{bx-a}{\sigma b}\big)\sqrt{2b}}.$$
By direct calculations, we find
\begin{align*}
\psi^{(k+1)}(x)=\int_{0}^{\infty}f^{(k+2)}(t,x)f^{(k)}(t,x)dt,\quad x\in\mathbb{R},
\end{align*}
that, by the help of H\"older's inequality (which is strict as $f^{(k)}(\cdot,x)$ is not a multiple of $f^{(k+2)}(\cdot,x)$), gives
\begin{align*}
\Bigg(\int_{0}^{\infty}f^{(k+2)}(t,x)f^{(k)}(t,x)dt\Bigg)^2<\int_{0}^{\infty}\big(f^{(k+2)}(t,x)\big)^2 dt\int_{0}^{\infty}\big(f^{(k)}(t,x)\big)^2dt.
\end{align*}
The latter is in fact equivalent to $$\psi^{(k+2)}(x)\psi^{(k)}(x)-\psi^{(k+1)}(x)^2>0.$$ 
\end{itemize}
\vspace{0.35cm}

\emph{Proof of Lemma \ref{UniqueSol}.}
\vspace{0.25cm}

Let $k\in\mathbb{N} \cup \{0\}$ be given and fixed, and define $\Lambda(x):=(x-c){\psi^{(k+1)}(x)}-\psi^{(k)}(x)$, $x\in\mathbb{R}$. We then have the following.
	\begin{itemize}
		\item [(i)] For $x\leq c$, it is readily seen that $\Lambda(x)<0$.
		
		\item [(ii)] One has $\Lambda(x)>0$ for all $x>c+\frac{\psi(c)}{\psi^{\prime}(c)}$. To see this, rewrite $\Lambda(x)=\psi^{(k)}(x)\big[(x-c)\frac{\psi^\prime(x)}{\psi(x)}-1\big]$, and notice that by Lemma \ref{Properties} 
		$$\Big(\frac{\psi^{\prime}(x)}{\psi(x)}\Big)'=\frac{\psi^{\prime\prime}(x)\psi(x)-(\psi^{\prime}(x))^2}{(\psi(x))^2}>0.$$ 
		Hence, for all $x>c+\frac{\psi(c)}{\psi^{\prime}(c)}>c$ one has that $\frac{\psi^{\prime}(x)}{\psi(x)}>\frac{\psi^{\prime}(c)}{\psi(c)}$, which implies 
		$$(x-c)\frac{\psi^{\prime}(x)}{\psi(x)}-1>(x-c)\frac{\psi^{\prime}(c)}{\psi(c)}-1>0,$$
		for all $x>c+\frac{\psi(c)}{\psi^{\prime}(c)}$. The latter clearly gives $\Lambda(x)>0$ for all $x>c+\frac{\psi(c)}{\psi^{\prime}(c)}$. 
		\end{itemize}
		
	Since $\Lambda'(x)=(x-c)\psi^{(k+2)}(x)>0$ for all $x>c$, we conclude from (i) and (ii) that there exists a unique solution on $(c,\infty)$ to the equation $\Lambda(x)=0$ by continuity of $\Lambda$.
\vspace{0.35cm}

\emph{Proof of Lemma \ref{xinfty}.}
\vspace{0.25cm}

	We argue by contradiction, and we suppose $x_{\infty}\geq x_0$. Then by definition of $x_0$ and $x_\infty$ we have
	\begin{align}\label{eq3}
	x_0-x_\infty=(x_0-c)-(x_\infty-c)=\frac{\psi(x_0)}{\psi^\prime(x_0)}-\frac{\psi^\prime(x_\infty)}{\psi^{\prime\prime}(x_\infty)}.
	\end{align}
	Since by Lemma \ref{Properties} 
	$$\Big(\frac{\psi(x)}{\psi^\prime(x)}\Big)^\prime=\frac{\psi^\prime(x)^2-\psi(x)\psi^{\prime\prime}(x)}{\psi^\prime(x)^2}<0,\quad\text{for any $x\in\mathbb{R}$,}$$ 
	we have by \eqref{eq3} that
	$$x_0-x_\infty\geq\frac{\psi(x_\infty)}{\psi^\prime(x_\infty)}-\frac{\psi^\prime(x_\infty)}{\psi^{\prime\prime}(x_\infty)}>0,$$
	again due to Lemma \ref{Properties}. But this contradicts $x_{\infty}\geq x_0$.
\vspace{0.35cm}

\emph{Proof of Lemma \ref{Existencez}.}
\vspace{0.25cm}

First of all notice that for the existence of a solution $z$ to \eqref{optimalExercise} it is necessary that $y-z\geq 0$ since $F\geq 0$, and that $x-\alpha z\in(x_\infty,x_0]$ since the domain of $F$ is $(x_\infty,x_0]$. Hence, if a solution to \eqref{optimalExercise} exists, it must be such that $z(x,y) \in (\frac{x-x_0}{\alpha},\frac{x-x_\infty}{\alpha} \wedge y]$, for all $(x,y)\in\mathbb{S}_2$.

Let $(x,y)\in\mathbb{S}_2$ with $y>F(x)$ be given and fixed, and define $R(z)=y-z-F(x-\alpha z)$, for $z\in(\frac{x-x_0}{\alpha},\frac{x-x_\infty}{\alpha} \wedge y)$. Then, one has $R(0)=y-F(x)>0$ and $\lim\limits_{z\uparrow\left(\frac{x-x_\infty}{\alpha} \wedge y\right)}R(z)<0$. Since $z\mapsto R(z)$ is strictly decreasing (by strict monotonicity of $F$) it follows that there exists a unique solution to \eqref{optimalExercise}.

Finally, \eqref{eq54} follows by noticing that $0$ solves \eqref{optimalExercise} when $y=F(x)$ and by uniqueness of the solution. Analogously, \eqref{eq55} follows by noticing that $\frac{x-x_0}{\alpha}$ uniquely solves \eqref{optimalExercise}, since $F(x_0)=0$.
\vspace{0.35cm}

\emph{Proof of Lemma \ref{sigma}.}
\vspace{0.25cm}

The first equality in \eqref{eq38} follows from \eqref{eq10}. In order to prove the last inequality in \eqref{eq38}, we find by Lemma \ref{Properties}-(2) that
\begin{align}\label{eq36}
\frac{\sigma^2}{2}\psi^{(k+2)}(x;a,\sigma)+(a-bx)\psi^{(k+1)}(x;a,\sigma)-(\rho+kb)\psi^{(k)}(x;a,\sigma)=0.
\end{align} 
From \eqref{eq36}, recalling that $\psi^{(k+1)}>0$, we obtain
$$(a-bx)=-\frac{\sigma^2\psi^{(k+2)}(x;a,\sigma)}{2\psi^{(k+1)}(x;a,\sigma)}+(\rho+kb)\frac{\psi^{(k)}(x;a,\sigma)}{\psi^{(k+1)}(x;a,\sigma)}.$$
and we thus have 
\begin{align*}
&(a-bx)\Big[\psi^{(k+1)}(x;a,\sigma)^2-\psi^{(k)}(x;a,\sigma)\psi^{(k+2)}(x;a,\sigma)\Big]+b\psi^{(k+1)}(x;a,\sigma)\psi^{(k)}(x;a,\sigma)\\
=&(\rho+(k+1)b)\psi^{(k)}(x;a,\sigma)\psi^{(k+1)}(x;a,\sigma)-(\rho+kb)\psi^{(k)}(x;a,\sigma)^2\frac{\psi^{(k+2)}(x;a,\sigma)}{\psi^{(k+1)}(x;a,\sigma)}\\
& \hspace{0.15cm} + \underbrace{\frac{\sigma^2\psi^{(k+2)}(x;a,\sigma)}{2\psi^{(k+1)}(x;a,\sigma)}\Big[\psi^{(k)}(x;a,\sigma)\psi^{(k+2)}(x,a,\sigma)-\psi^{(k+1)}(x;a,\sigma)^2\Big]}_{>0\,\text{by Lemma \ref{Properties}}}\\
>&\frac{\psi^{(k)}(x;a,\sigma)}{\psi^{(k+1)}(x;a,\sigma)}\Big[(\rho+(k+1)b)\psi^{(k+1)}(x;a,\sigma)^2-(\rho+kb)\psi^{(k)}(x,a,\sigma)\psi^{(k+2)}(x;a,\sigma)\Big].
\end{align*}

We now aim at establishing that the last term on the right-hand side of the latter equation is positive. With regard to \eqref{eq38}, this would clearly imply that $\frac{\partial(\psi^{(k)}(x;a,\sigma)/\psi^{(k+1)}(x;a,\sigma))}{\partial\sigma}>0$. From \eqref{eq36} we have $$(\rho+(k+1)b)\psi^{(k+1)}(x;a,\sigma)=\frac{\sigma^2}{2}\psi^{(k+3)}(x;a,\sigma)+(a-bx)\psi^{(k+2)}(x;a,\sigma),$$ which then yields
\begin{align*}
&\frac{\psi^{(k)}(x;a,\sigma)}{\psi^{(k+1)}(x;a,\sigma)}\Big[(\rho+(k+1)b)\psi^{(k+1)}(x;a,\sigma)^2-(\rho+kb)\psi^{(k)}(x;a,\sigma)\psi^{(k+2)}(x;a,\sigma)\Big]\\
=&\frac{\psi^{(k)}(x;\sigma)}{\psi^{(k+1)}(x;a,\sigma)}\Big[\frac{\sigma^2}{2}\psi^{(k+3)}(x;a,\sigma)\psi^{(k+1)}(x;a,\sigma)\\
&+\psi^{(k+2)}(x;a,\sigma)\big((a-bx)\psi^{(k+1)}(x;a,\sigma)-(\rho+kb)\psi^{(k)}(x;a,\sigma)\big)\Big]\\
=&\frac{\sigma^2}{2}\frac{\psi^{(k)}(x;\sigma)}{\psi^{(k+1)}(x;a,\sigma)}\Big[\psi^{(k+3)}(x;a,\sigma)\psi^{(k+1)}(x;a,\sigma)-\psi^{(k+2)}(x;a,\sigma)^2\Big] > 0,
\end{align*}
where the last equality follows again by an application of \eqref{eq36}, and the last inequality by Lemma \ref{Properties}. Hence $\frac{\partial(\psi^{(k)}(x;a,\sigma)/\psi^{(k+1)}(x;a,\sigma))}{\partial\sigma}>0$ and the proof is completed.

\section{An Auxiliary Result}	
\label{App:B}

\renewcommand{\theequation}{B-\arabic{equation}}

\begin{lemma}\label{Propx_0xbar}
	Let $x_0$ be the solution to \eqref{def:x0} and 
	\begin{equation}
	\label{barx}
	\bar{x}:=\frac{a + \rho c}{\rho + b}.
	\end{equation}
	 We have $$\bar{x} < x_0.$$
\end{lemma}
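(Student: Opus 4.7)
The plan is to leverage the ODE $\mathcal{L}\psi - \rho\psi = 0$ satisfied by $\psi$, evaluated at the defining point $x_0$, and combine it with the scalar identity $(x_0-c)\psi'(x_0) = \psi(x_0)$ from \eqref{def:x0}. The strict positivity of $\psi'$ and the strict convexity of $\psi$ (Lemma \ref{Properties}) will then pin down the sign of $x_0 - \bar{x}$ immediately.

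Concretely, I would first write out
\[
\frac{\sigma^2}{2}\psi''(x_0) + (a - b x_0)\psi'(x_0) - \rho\,\psi(x_0) = 0,
\]
and substitute $\psi(x_0) = (x_0 - c)\psi'(x_0)$ into the last term. Collecting the coefficient of $\psi'(x_0)$ yields
\[
\frac{\sigma^2}{2}\psi''(x_0) \;=\; \bigl[(\rho + b)\,x_0 - (a + \rho c)\bigr]\psi'(x_0) \;=\; (\rho+b)(x_0 - \bar{x})\,\psi'(x_0),
\]
after recognizing $\bar{x} = (a+\rho c)/(\rho+b)$.

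Finally, I invoke $\psi'(x_0) > 0$ (strict monotonicity of $\psi$) and $\psi''(x_0) > 0$ (strict convexity from Lemma \ref{Properties}-(1)) together with $\sigma^2 > 0$ and $\rho + b > 0$ to conclude that $x_0 - \bar{x} > 0$, i.e.\ $\bar{x} < x_0$, which is the desired strict inequality. There is essentially no obstacle here beyond bookkeeping: the proof is a one-line algebraic manipulation once the ODE and the defining equation for $x_0$ are combined, and the sign information is read off from the already-established properties of $\psi$.
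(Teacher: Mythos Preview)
Your proof is correct and rests on the same ingredients as the paper's---the ODE $\mathcal{L}\psi-\rho\psi=0$ together with the strict convexity and monotonicity of $\psi$ from Lemma~\ref{Properties}. The only difference is the point of evaluation: the paper evaluates the ODE at $\bar{x}$ to show $H(\bar{x}):=(\bar{x}-c)\psi'(\bar{x})-\psi(\bar{x})<0$ and then invokes the sign structure of $H$ (namely $H<0$ on $(-\infty,x_0)$ and $H>0$ on $(x_0,\infty)$, established in the proof of Lemma~\ref{UniqueSol}) to conclude $\bar{x}<x_0$; you instead evaluate at $x_0$ and substitute the defining relation $\psi(x_0)=(x_0-c)\psi'(x_0)$ directly, which yields the identity
\[
\frac{\sigma^2}{2}\psi''(x_0)=(\rho+b)(x_0-\bar{x})\psi'(x_0)
\]
and hence the strict inequality in one step, without appealing to the global sign behavior of $H$. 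Your route is marginally more self-contained for that reason, but the two arguments are essentially equivalent.
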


\begin{proof}
	Define $H(x):=(x-c)\psi^\prime(x)-\psi(x),$ $x\in\mathbb{R}$. Since $\psi$ satisfies
	$$\frac{\sigma^2}{2}\psi''(x)+(a-bx)\psi'(x)-\rho\psi(x)=0,\quad\text{for all $x\in\mathbb{R}$},$$ 
	and $\frac{\sigma^2}{2}\psi''(x)>0$, we find $-\psi(x)<-\frac{(a-bx)}{\rho}\psi^\prime(x),\,\forall x\in\mathbb{R}.$ 
	Thus, we have 
	$$H(\bar{x})<(\bar{x}-c)\psi'(\bar{x})-\frac{(a-b\bar{x})}{\rho}\psi'(\bar{x})=\Big[(\bar{x}-c)\rho-(a-b\bar{x})\Big]\frac{\psi^\prime(\bar{x})}{\rho}=0,$$ 
	by the definition of $\bar{x}$. Since $H(x_0)=0$, $H(x)<0$ for all $x<x_0$ and $H(x)>0$ for all $x>x_0$, it must necessarily be $\bar{x}< x_0$.
\end{proof}


\medskip

\indent \textbf{Acknowledgments.}. Financial support by the German Research Foundation (DFG) through the Collaborative Research Centre 1283 ``Taming uncertainty and profiting from randomness and low regularity in analysis, stochastics and their applications'' is gratefully acknowledged by the authors. We thank Stefan Ankirchner, Dirk Becherer, Todor Bilarev, Ralf Korn, Frank Riedel, Wolfgang J.\ Runggaldier, and Thorsten Upmann for valuable discussions and comments. In particular, we are thankful to Peter Frentrup for pointing out a mistake in a previous version of this manuscript.



\begin{thebibliography}{199}

\bibitem{almansour}\textsc{Almansour, A., Insley, M.}\ (2016). The Impact of Stochastic Extraction Cost on the Value of an Exhaustible Resource: An Application to the Alberta Oil Sands. \textsl{Energy J.}\ \textbf{37(2)}.

\bibitem{Alvarez2000} \textsc{Alvarez, L.H.R.}\ (2000). Singular Stochastic Control in the Presence of a State-dependent Yield Structure. \textsl{Stoch.\ Processes Appl.}\ \textbf{86} 323--343.

\bibitem{bateman}\textsc{Bateman, H.}\ (1981). \textsl{Higher Transcendental Functions, Volume II}. McGraw-Hill Book Company.

\bibitem{Becherer}\textsc{Becherer, D., Bilarev, T., Frentrup, P.}\ (2017). Optimal Liquidation under Stochastic Liquidity. \textsl{Finance Stoch.}\ \textbf{22(1)} 39--68.

\bibitem{Becherer2}\textsc{Becherer, D., Bilarev, T., Frentrup, P.}\ (2018). Stability for Large Investors Strategies in M1/J1 Topologies. To appear on Bernoulli. \textbf{ArXiv}:1701.02167. 

\bibitem{BorSal} \textsc{Borodin, W.H., Salminen, P.}\ (2002). \emph{Handbook of Brownian motion-Facts and Formulae}. 2nd Edition. Birkh\"auser.

\bibitem{brekke}\textsc{Brekke, K.A., \O ksendal, B.}\ (1994). Optimal Switching in an Economic Activity under Uncertainty. \textsl{SIAM\ J.\ Control Optim.}\ \textbf{32(4)} 1021--1036.

\bibitem{Bridge}\textsc{Bridge, D.S., Shreve, S.E.}\ (1992). Multi-dimensional Finite-Fuel Singular Stochastic Control. \textsl{Lecture Notes Control Inform.\ Sci.}\ \textbf{177} 38--58.

\bibitem{DeAFe16} \textsc{De Angelis, T., Ferrari, G.}~(2018). Stochastic Nonzero-sum Games: a New Connection between Singular Control and Optimal Stopping. \textsl{Adv.\ Appl.\ Probab.}\ \textbf{50(2)} 347--372.


\bibitem{dupuis}\textsc{Dupuis, P., Ishii, H.}\ (1993). SDEs with Oblique Reflection on Nonsmooth Domains. \textsl{Ann.\ Probab.}\ \textbf{21(1)} 554--580.

\bibitem{EKK} \textsc{El Karoui, N., Karatzas, I.}\ (1988). Probabilistic Aspects of Finite-Fuel, Reflected Follower Problems. \textsl{Acta Appl.\ Math.}\ \textbf{11} 223--258.

\bibitem{EKK2} \textsc{El Karoui, N., Karatzas, I.}\ (1991). A New Approach to the Skorohod Problem and its Applications. \textsl{Stoch.\ Stoch.\ Rep.}\ \textbf{34} 57--82.

\bibitem{feliz}\textsc{Feliz, R.A.}\ (1993). The Optimal Extraction Rate of a Natural Resource under Uncertainty. \textsl{Econ.\ Lett.}\ \textbf{43} 231--234.


\bibitem{ferrari}\textsc{Ferrari, G., Yang, S.}\ (2018). On an Optimal Extraction Problem with Regime Switching. \textsl{Adv.\ Appl.\ Probab.}\ \textbf{50(3)} 671--705.

\bibitem{guo}\textsc{Guo, X., Zervos, M.}\ (2015). Optimal Execution with Multiplicative Price Impact. \textsl{Siam\ J.\ Finance Math.}\ \textbf{6(1)} 281--306.

\bibitem{Hotelling} \textsc{Hotelling H.}\ (1931). The Economics of Exhaustible Resources. \textsl{J.\ Political Econ.}\ \textbf{39(2)} 137--175.

\bibitem{Jacketal} \textsc{Jack, A., Jonhnson, T.C., Zervos M.}\ (2008). A Singular Control Problem with Application to the Goodwill Problem. \textsl{Stoch.\ Processes Appl.}\ \textbf{118} 2098--2124.

\bibitem{jeanblanc}\textsc{Jeanblanc, M., Yor, M., Chesney, M.}\ (2006). \textsl{Mathematical Methods for Financial Markets}. Springer.

\bibitem{KS84} \textsc{Karatzas, I., Shreve, S.E.}\ (1984). Connections between Optimal Stopping and Singular Stochastic Control I. Monotone Follower Problems. \textsl{ SIAM J.\ Control Optim.}\ \textbf{22} 856--877. 

\bibitem{K85} \textsc{Karatzas, I.}\ (1985). Probabilistic Aspects of Finite-Fuel Stochastic Control. \textsl{Proc.\ Natl.\ Acad.\ Sci.\ U.S.A.}\ \textbf{82} 5579--5581. 

\bibitem{karatzas2}\textsc{Karatzas, I., Shreve, S.E.}\ (1986). Equivalent Models for Finite-Fuel Stochastic Control. \textsl{Stochastics} \textbf{18(3-4)} 245--276.

\bibitem{karatzas}\textsc{Karatzas, I., Shreve, S.E.}\ (1991). \textsl{Brownian Motion and Stochastic Calculus}. Second edition. Springer.

\bibitem{Karatzasetal00} \textsc{Karatzas, I., Ocone, D., Wang, H., Zervos, M.}\ (2000). Finite-Fuel Singular Control with Discretionary Stopping. \textsl{Stochastics} \textbf{71(1-2)} 1--50.

\bibitem{Lutz}\textsc{Lutz, B.}\ (2010). \textsl{Pricing of Derivatives on Mean-Reverting Assets}. Springer.

\bibitem{Pemy} \textsc{Pemy, M.}\ (2018). Explicit Solutions for Optimal Resource Extraction Problems under Regime Switching L\'evy Models. Preprint, \textbf{ArXiv}: :1806.06105v1.

\bibitem{Pindyck1}\textsc{Pindyck R.S.}\ (1978). The Optimal Exploration and Production of Nonrenewable Resources. \textsl{J.\ Political Econ.}\ \textbf{86(5)} 841-861.

\bibitem{Pindyck2}\textsc{Pindyck R.S.}\ (1980). Uncertainty and Exhaustible Resource Markets. \textsl{J.\ Political Econ.}\ \textbf{88(6)} 1203--1225.

\bibitem{protter}\textsc{Protter, P.E.}\ (1990). \textsl{Stochastic Integration and Differential Equations}. Springer.

\bibitem{Zhu} \textsc{Zhu H.}\ (1992). Generalized Solution in Singular Stochastic Control: the Nondegenerate Problem. \textsl{Appl.\ Math.\ Optim.} \textbf{25} 225--245.

\end{thebibliography}
\end{document}